\newtheorem*{rep@theorem}{\rep@title}
\newcommand{\newreptheorem}[2]{%
	\newenvironment{rep#1}[1]{%
		\def\rep@title{#2 \ref{##1}} 
		\begin{rep@theorem}}%
		{\end{rep@theorem}}}
\newtheorem{theorem}{Theorem}[section]
\newtheorem{prop}[theorem]{Proposition}
\newtheorem{lem}[theorem]{Lemma}
\theoremstyle{definition}
\newtheorem{definition}[theorem]{Definition}
\newtheorem{remark}[theorem]{Remark}
\newcommand{\Li}{\mathcal{L}}
\newcommand{\Lia}{\Li(a,b,m_1,\dots,m_r)}
\newcommand{\Fe}{\mathbb{F}_e}
\newcommand{\Fer}{\mathbb{F}_{e,r}}
\theoremstyle{plain}
\numberwithin{equation}{section}
\newtheorem*{theorem*}{Theorem}
\newtheorem{proposition}[theorem]{Proposition}
\newtheorem{lemma}[theorem]{Lemma}
\newtheorem{conjecture}[theorem]{Conjecture}
\newtheorem{algorithm}[theorem]{Algorithm}
\theoremstyle{definition}
\newtheorem{question}[theorem]{Question}
\newtheorem{example}[theorem]{Example}
\numberwithin{equation}{section}
\title{Seshadri constants on  blow-ups of Hirzebruch surfaces}
\author[K. Hanumanthu]{Krishna Hanumanthu}
\address{Chennai Mathematical Institute, H1 SIPCOT IT Park, Siruseri, Kelambakkam 603103, India}
\email{krishna@cmi.ac.in}
\author[C. J.  Jacob]{Cyril J. Jacob}
\address{Chennai Mathematical Institute, H1 SIPCOT IT Park, Siruseri, Kelambakkam 603103, India}
\email{cyril@cmi.ac.in}
\author[Suhas B. N.]{Suhas B. N.}
\address{Chennai Mathematical Institute, H1 SIPCOT IT Park, Siruseri, Kelambakkam 603103, India}
\email{suhasbn@cmi.ac.in}
\author[A. K. Singh]{Amit Kumar Singh}
\address{Department of Mathematics, SRM University AP, Amaravati 522 240, Andhra Pradesh, India}
\email{amitkumar.si@srmap.edu.in}
\subjclass[2020]{14C20,14E05,14J26}
\keywords{Seshadri constant, Hirzebruch surfaces, Linear system of curves}
\date{\today}
\begin{document}
	
	\begin{abstract}
		Let $e,r \ge 0$ be integers and let $\mathbb{F}_e : = \mathbb{P}(\mathcal{O}_{\mathbb{P}^1} \oplus \mathcal{O}_{\mathbb{P}^1}(-e))$ denote the Hirzebruch surface with invariant $e$. 
		We compute the Seshadri constants of an ample line bundle at an arbitrary point of the $r$-point blow-up of 
		$\mathbb{F}_e$  when $r \leq e-1$ and at a very general point when $r=e$ or $r=e+1$. We also discuss several conjectures on linear systems of curves on the blow-up of $\mathbb{F}_e$ at $r$ very general points.
	\end{abstract}
	
	\maketitle
	
	\section{Introduction}\label{Introduction}

	Let $\mathbb{K}$ denote an algebraically closed field of characteristic $0$. All the varieties considered throughout this article are defined over $\mathbb{K}$. 
	
	Seshadri constants measure the local positivity of an ample line bundle on a projective variety. They were first introduced by Demailly as a way to study the Fujita conjecture \cite{Dem92}. Before giving the formal definition of Seshadri constants,  we recall the following criterion for ampleness given by Seshadri.
	\begin{theorem} \cite[Theorem I.7.1]{Hartshorne1970}
		Let $X$ be a projective variety and $L$ be a line bundle on $X$. Then $L$ is ample if and only if there exists a positive number $\varepsilon$ such that for all $x \in X$ and all irreducible and reduced curves $C$ passing through $x$, one has $L \cdot C \geq \varepsilon.\text{mult}_xC$.
	\end{theorem}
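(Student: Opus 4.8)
The plan is to prove the two implications separately, the forward implication (ampleness $\Rightarrow$ the Seshadri inequality) being elementary and the converse being the substantive part. For the forward direction I would fix $m$ so that $mL$ is very ample and use the resulting closed embedding $X \hookrightarrow \mathbb{P}^N$ to reduce the statement to the classical fact that the degree of a projective curve dominates its multiplicity at any point. Concretely, for an irreducible reduced curve $C$ through $x$ with $\nu = \text{mult}_x C$, a general hyperplane $H$ through $x$ meets $C$ at $x$ with intersection multiplicity at least $\nu$, while the total $\#(C\cap H)$ equals $\deg_{mL} C = mL\cdot C$; hence $mL\cdot C \ge \text{mult}_x C$, and $\varepsilon = 1/m$ works uniformly in $x$ and $C$.

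For the converse I would argue through the blow-up together with the Nakai--Moishezon criterion. Let $\pi\colon X' \to X$ be the blow-up of $X$ at an arbitrary point $x$, with exceptional divisor $E$. The first step is to show that the hypothesis forces $\pi^* L - \varepsilon E$ to be nef on $X'$. One checks this against the two types of irreducible curves: for a curve $\gamma$ contracted by $\pi$ one uses that $\mathcal{O}_{X'}(-E)$ is $\pi$-ample, so $(\pi^* L - \varepsilon E)\cdot \gamma = \varepsilon\,(-E\cdot\gamma) > 0$; for the strict transform $C'$ of a curve $C\subset X$ through $x$ one has $E\cdot C' = \text{mult}_x C$, so $(\pi^* L - \varepsilon E)\cdot C' = L\cdot C - \varepsilon\,\text{mult}_x C \ge 0$ by assumption (curves avoiding $x$ being even easier).

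The second step is to feed this nefness into Nakai--Moishezon. Since $\pi^* L$ restricts trivially to $E$, for any irreducible $V\subseteq X$ of dimension $d\ge 1$ with strict transform $V'$ all the mixed terms in the binomial expansion drop out, and using the nef inequality $(\pi^* L - \varepsilon E)^d\cdot V' \ge 0$ the expansion collapses to
\[
0 \le (\pi^* L - \varepsilon E)^d\cdot V' = L^d\cdot V - \varepsilon^d\,\text{mult}_x V,
\]
where the last term comes from the intersection-theoretic description $E^d\cdot V' = (-1)^{d-1}\,\text{mult}_x V$ of the multiplicity. As $\text{mult}_x V \ge 1$, this yields $L^d\cdot V \ge \varepsilon^d > 0$ for every positive-dimensional subvariety, while the case $d=1$ gives $L\cdot C \ge \varepsilon > 0$; by Nakai--Moishezon $L$ is ample.

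The main obstacle is precisely this passage from curves to higher-dimensional subvarieties: having $L\cdot C > 0$ for all curves does \emph{not} by itself imply ampleness (as Mumford's example shows), so the multiplicity refinement built into the hypothesis must be exploited, and the blow-up is exactly what converts it into the positivity of the top self-intersections $L^d\cdot V$. I would expect the delicate points to be verifying that $\pi^* L - \varepsilon E$ is genuinely nef at a possibly singular point $x$ (where $E$ is the projectivized tangent cone rather than a projective space) and pinning down the signs in the self-intersection computation so that the exceptional contribution enters as $-\varepsilon^d\,\text{mult}_x V$; both are handled by the relative ampleness of $-E$ and the standard identification of $E\cdot(\text{strict transform})$ with the multiplicity.
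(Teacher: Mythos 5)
The paper does not prove this statement at all: it is quoted as background and attributed to \cite[Theorem I.7.1]{Hartshorne1970}, so there is no internal proof to compare against. Judged on its own, your proposal is a correct proof, and it is essentially the standard modern argument (the one found, e.g., in Lazarsfeld's \emph{Positivity}, Theorem 1.4.13): the forward direction via $\deg C \ge \mathrm{mult}_x C$ for a very ample polarization, and the converse by showing $\pi^*L - \varepsilon E$ is nef on $\mathrm{Bl}_x X$ and feeding $(\pi^*L - \varepsilon E)^d \cdot V' \ge 0$ into Nakai--Moishezon, with the mixed terms killed by the projection formula (the cycles $E^k \cdot V'$ push forward to cycles supported at $x$) and the exceptional term identified as $E^d \cdot V' = (-1)^{d-1}\mathrm{mult}_x V$. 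Your handling of the two delicate points is also right: nefness on curves missing $x$ follows because the hypothesis applied at \emph{some} point of such a curve already gives $L\cdot C > 0$, and at a singular point $x$ the identities $E\cdot C' = \mathrm{mult}_x C$ and $E^d\cdot V' = (-1)^{d-1}\mathrm{mult}_x V$ persist when multiplicity is read as the Hilbert--Samuel multiplicity $e(\mathfrak{m}_x, \mathcal{O}_{V,x})$, which is exactly the degree of the exceptional Cartier divisor on the strict transform; $\pi$-ampleness of $\mathcal{O}_{X'}(-E)$ covers the curves inside $E$. The only cosmetic caveat is that a fully written version should cite or prove these intersection-theoretic identities (Fulton, \emph{Intersection Theory}, Ch.\ 4) rather than assert them, but as a blueprint the argument is complete and sound.
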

	
	The question of finding the optimal number $\varepsilon$ satisfying the above condition naturally leads to the following definition.
	
	\begin{definition} [Seshadri constant at a point]
		Let $X$ be an irreducible projective variety and $L$ be a nef line bundle on $X$. Then for  $x \in X$, the real number
		\begin{equation*}
			\varepsilon(X,L;x) := \inf \frac{L \cdot C}{\text{mult}_x C}
		\end{equation*}
		is called the \textit{Seshadri constant} of $L$ at $x$, where the infimum is taken over all irreducible and reduced curves $C$ such that $x \in C$.
	\end{definition}
	
	\begin{definition}
		{Let $X$ be an irreducible projective variety and $L$ be a nef line bundle on $X$. If $x\in X$ and $C$ is a curve passing through $x$, then the ratio $\frac{L\cdot C}{\text{mult}_xC}$ is called the \textit{Seshadri ratio of $L$}  with respect to $C$ and $x$.  If  $\varepsilon(X,L,x)=\frac{L\cdot C}{\text{mult}_xC}$ for a curve $C$ (i.e., infimum is achieved by $C$), then $C$ is called a \textit{Seshadri curve of $L$ at $x$}. } 
	\end{definition}
	
	There is an equivalent definition of Seshadri constant of $L$ at $x \in X$ in terms of blow-ups. Let $x \in X$ and let $\pi : \widetilde{X} = \text{Bl}_x(X) \rightarrow X$ be the blow-up of $X$ at $x$ with exceptional divisor $E \subset \widetilde{X}$. Then it is not hard to see the following equality: 
	\begin{equation}\label{2nd definition of Seshadri constant}
		\varepsilon(X,L;x) = \text{sup}\{s \geq 0 \;|\; \pi^{*}L - sE \,\text{ is nef}\}.
	\end{equation}
	
	In general, Seshadri constants are hard to compute and their precise values are known only in a few cases. One of the main challenges involved in their computation is that they depend on the individual curve and not just on the linear or algebraic equivalence class of the curve. However, even if one cannot compute their explicit value, one hopes to provide some (optimal) bounds on them. Let $n$ denote the dimension of $X$ and let $L$ be any ample line bundle on $X$.  
	The following inequalities are well-known and hold for every point $x \in X$:
	
	$$0<\varepsilon(X,L;x) \leq \sqrt[n]{L^n}.$$ In specific cases, one can ask if better bounds can be obtained.

	There is a rich literature on the study of Seshadri constants on various smooth projective surfaces (for example, see \cite{Har1986}, \cite{Ein-Lazarsfeld1993}, \cite{Bauer2009}, \cite{Sano2014}). In particular, on the $r$-point blow-up of $\mathbb{P}^2$, the explicit computation of Seshadri constants for ample line bundles was carried out in \cite{Sano2014} for $r \leq 8$. 
	
	We now turn our attention to \textit{Hirzebruch surfaces}. Let $e \geq 0$ be an integer. 
	The projective bundle $$\mathbb{F}_e : = \mathbb{P}(\mathcal{O}_{\mathbb{P}^1} \oplus \mathcal{O}_{\mathbb{P}^1}(-e))$$ over $\mathbb{P}^1$
	associated to the rank 2 vector bundle $\mathcal{O}_{\mathbb{P}^1}\oplus \mathcal{O}_{\mathbb{P}^1}(-e)$ is called the \textit{Hirzebruch surface with invariant} $e$. Let $C_e$ denote a section of the natural map $\mathbb{F}_e \to \mathbb{P}^1$ such that $C_e$ is the divisor associated to the line bundle $\mathcal{O}_{\mathbb{F}_e}(1)$. 
	Let $f$ denote the class of a fiber of  map 
	$\mathbb{F}_e \to \mathbb{P}^1$. Then the Picard group 
	$\text{Pic}(\mathbb{F}_e)$
	of 
	$\mathbb{F}_e$ is generated by $C_e$ and $f$ and the intersection product of divisors on  $\Fe$ is determined by the following:
	\[C_e^2=-e, f^2=0,  \text{ and } C_e\cdot f = 1.\]

	Precise values of Seshadri constants are known for any ample line bundle on 
	$\mathbb{F}_e$; see \cite[Theorem 5.1]{Garcia2006} and \cite[Theorem 3.27]{Syzdek2005}.  Let $L = aC_e + bf$ be an ample line bundle on $\mathbb{F}_e$ where $a,b$ are positive integers. 
	
	When $e =0$, the Seshadri constants are given by $$\varepsilon(\mathbb{F}_e,L;x) = \min (a,b) \text { for all }x \in \mathbb{F}_e.$$ 
	
	When $e \geq 1$, we have 
	$$\varepsilon(\Fe,L;x)=
	\begin{cases}
		\begin{split}
			\min (a, \, &b-ae),  &\text{ if } x\in C_e,\\
			&a, 	&\text{ if }x\notin C_e.
		\end{split}
	\end{cases} $$

	It is well-known that $\mathbb{P}^2$ and $\mathbb{F}_e$ with $e \neq 1$ are the minimal rational surfaces \cite[Theorem V.10]{Beau1983}. Any rational surface can be obtained as a blow-up of a minimal rational surface at finitely many points. Seshadri constants of ample line bundles on blow-ups of $\mathbb{P}^2$ are extensively studied; for example, see \cite{Sano2014, Dumnicki2016, Hanumanthu2018, Farnik2020}. 
	
	In view of this, it is natural to ask if one can obtain analogous results on blow-ups of $\mathbb{F}_e$. 
	
	Let $p_1,\dots, p_r \in \mathbb{F}_e$  be distinct points and $f_i$ the fiber in $\mathbb{F}_e$ containing $p_i$ for each $i$.  Let 
	$$\pi : \mathbb{F}_{e,r} \rightarrow \mathbb{F}_e$$ be the blow-up of $\Fe$ at $p_1,\dots, p_r$. 
	Then  $$\text{Pic}(\mathbb{F}_{e,r}) = \text{Pic}(\mathbb{F}_e) \oplus \mathbb{Z}. E_1  \oplus \dots \oplus \mathbb{Z}. E_r,$$ where $E_i$ are the exceptional divisors in $\mathbb{F}_{e,r}$. Moreover, in both $\mathbb{F}_e$ and $\mathbb{F}_{e,r}$, the numerical equivalence coincides with the linear equivalence, so that the Picard groups of these surfaces are the same as the corresponding \textit{N\'eron-Severi} groups, which we denote by $\text{NS}(\mathbb{F}_{e})$ and $\text{NS}(\mathbb{F}_{e,r})$, respectively. Let $K_{\mathbb{F}_e}$ and $K_{\mathbb{F}_{e,r}}$ denote the canonical line bundles of $\mathbb{F}_e$ and $\mathbb{F}_{e,r}$, respectively. We have $$ K_{\mathbb{F}_e} \sim -2 C_e - (e+2) f $$ and $$K_{\mathbb{F}_{e,r}} \sim -2 H_e - (e+2) F_e + \sum\limits_{i=1}^r E_i,$$ where $H_e = \pi^*(C_e)$ and $F_e = \pi^*(f).$

	By abuse of notation, we will use $H_e$ and $F_e$ to denote divisor classes as well as specific curves linearly equivalent to these divisor classes. 	
	
	For a positive integer $m$, a \textit{$(-m)$-curve} in $\mathbb{F}_{e,r}$ is a reduced and irreducible curve $C$ such that $C^2 = -m$ and $K_{\mathbb{F}_{e,r}}\cdot C = m-2$.\\
	
	The first aim of this article is to compute Seshadri constants of ample line bundles on $\Fer$ for certain values of $r$. One of our main results is the following. 
	\begin{theorem*}[See Theorem \ref{actual computation of Seshadri constants} and Theorem \ref{for x in general position}]
		Let $p_1,\dots, p_r \in \mathbb{F}_e$ be very general points, where $r \leq e+1$. Let $L = aH_e+bF_e-\sum\limits_{i=1}^r m_iE_i$ be an ample line bundle on $\mathbb{F}_{e,r}$ and $x \in \mathbb{F}_{e,r}$ a very general point. 
		Then 
		\[
		\varepsilon(\Fer,L;x)=
		\begin{cases}
			\begin{split}
				&a, &\text{ if } &r\leq e-1,\\
				\min (a, \, &b-\sum m_i),  &\text{ if }&r=e \text{ or } e+1,
			\end{split}
		\end{cases} 
		\] 
		where the sum runs over the largest $e$ numbers among $\{m_1,\dots, m_r\}$.
	\end{theorem*}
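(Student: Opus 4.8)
The plan is to use the blow-up description \eqref{2nd definition of Seshadri constant} of the Seshadri constant. Let $\pi_x\colon Y\to\Fer$ be the blow-up of $\Fer$ at $x$ with exceptional divisor $E$. Then $Y$ is simply $\Fe$ blown up at the $r+1$ very general points $p_1,\dots,p_r,x$; write $E_1,\dots,E_{r+1}$ for its exceptional divisors (so $E=E_{r+1}$) and continue to denote by $H_e,F_e$ the pullbacks of $C_e,f$ to $Y$. Then $\pi_x^\ast L=aH_e+bF_e-\sum_{i=1}^r m_iE_i$, and $\varepsilon(\Fer,L;x)=\sup\{s\ge 0 : \pi_x^\ast L-sE\text{ is nef}\}$. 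Since $(\pi_x^\ast L-sE)^2=L^2-s^2$ and $\pi_x^\ast L-sE$ meets an ample class positively for small $s$, on the surface $Y$ such a class is nef if and only if $s\le\sqrt{L^2}$ and it meets nonnegatively every irreducible curve of negative self-intersection. Thus the whole computation reduces to classifying the negative curves of $Y$ and extracting from each the resulting bound on $s$; the Seshadri constant is the minimum of $\sqrt{L^2}$ and all these bounds.

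For the upper bound I would exhibit explicit curves. The strict transform of the fibre of $\Fe\to\mathbb{P}^1$ through $x$ is the $(-1)$-curve $F_e-E_{r+1}$, and $(\pi_x^\ast L-sE)\cdot(F_e-E_{r+1})=a-s$, which forces $s\le a$ in every case. When $r\ge e$, the linear system $|C_e+ef|$ on $\Fe$ is base-point free of dimension $e+1$; taking the $e$ indices with the largest $m_i$, say $S$, together with $x$ gives $e+1$ very general points, through which there passes a unique member, irreducible since the points are very general. Its strict transform is the $(-1)$-curve $C=H_e+eF_e-E_{r+1}-\sum_{i\in S}E_i$, and a direct computation gives $(\pi_x^\ast L-sE)\cdot C=b-\sum_{i\in S}m_i-s$, forcing $s\le b-\sum_{i\in S}m_i$. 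Since $S$ consists of the $e$ largest multiplicities this is the claimed value, while for $r\le e-1$ no such curve exists, leaving only the bound $s\le a$.

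For the lower bound I must show that $\pi_x^\ast L-s_0E$ is nef, where $s_0$ denotes the asserted value. One first checks, using the ampleness inequalities $b-ae>0$, $a-m_i>0$ (and, when $r\ge e$, $b-\sum_{i\in S}m_i>0$), that $s_0\le\sqrt{L^2}$; for $r\le e-1$ these already give $L^2>a^2$. The substantive point is the classification of negative curves: because $p_1,\dots,p_{r+1}$ are very general and $r+1\le e+2$, I would show that every irreducible curve $C$ on $Y$ with $C^2<0$ is one of the exceptional divisors $E_i$, a fibre transform $F_e-E_i$, the transform $H_e$ of $C_e$, or a transform $H_e+eF_e-\sum_{i\in S'}E_i$ of a member of $|C_e+ef|$ with $|S'|\ge e+1$. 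This last family is nonempty exactly when $r+1\ge e+1$, i.e. $r\ge e$, which is precisely the threshold at which the stated formula changes. Intersecting $\pi_x^\ast L-s_0E$ with each curve on the list yields only the conditions equivalent to ampleness, together with $s_0\le a$ (from $F_e-E_{r+1}$) and $s_0\le b-\sum_{i\in S}m_i$ with $S$ the $e$ largest multiplicities (from the section transforms through $x$), all of which hold by the definition of $s_0$.

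The main obstacle is exactly this classification, and in particular ruling out irreducible negative curves $\alpha H_e+\beta F_e-\sum\gamma_iE_i$ with $\alpha\ge 2$, i.e. genuine multisections. The components with $\alpha\le 1$ are handled as above: for $k<e$ one has $(C_e+kf)\cdot C_e=k-e<0$, so $C_e$ splits off as a fixed component and only $H_e$ and fibre transforms survive, while for $k>e$ the transforms acquire negative self-intersection only after removing more than $r+1$ exceptional classes. For $\alpha\ge 2$ I would combine adjunction, $C^2+K_Y\cdot C=2g(C)-2\ge -2$, with the fact that at very general points the multiplicities $\gamma_i$ are constrained by a dimension count for the relevant linear systems, forcing $C^2\ge 0$ within the range $r+1\le e+2$. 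Carrying this out uniformly in the ample class $L$ is the technical heart of the argument; granting it, the bounds from the second and third steps coincide and the theorem follows.
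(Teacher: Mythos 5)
Your reduction of nefness on $Y=\mathrm{Bl}_x(\mathbb{F}_{e,r})$ to the two conditions $s\le\sqrt{L^2}$ and nonnegativity against irreducible curves of negative self-intersection is valid, and your upper-bound curves (the fiber through $x$, and a member of $|C_e+ef|$ through $\pi(x)$ and the $e$ points of largest multiplicity) are exactly the curves the paper uses in Theorems \ref{actual computation of Seshadri constants} and \ref{for x in general position}. The gap is in the lower bound, and you name it yourself: the classification of irreducible negative curves on the blow-up of $\mathbb{F}_e$ at $r+1\le e+2$ very general points is stated but deferred (``I would show\dots'', ``granting it''), and the method you sketch for the hard case $\alpha\ge 2$ --- adjunction plus ``the fact that at very general points the multiplicities $\gamma_i$ are constrained by a dimension count for the relevant linear systems'' --- does not work as stated. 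A dimension count on linear systems only gives \emph{lower} bounds on dimensions, hence existence of curves with prescribed multiplicities; it gives no \emph{upper} bound on the multiplicities of an irreducible curve that happens to pass through very general points. Bounding such multiplicities is precisely the difficulty behind the SHGH-type statements (the paper's Conjectures \ref{conj1}--\ref{conj3}), and the classification you need is exactly the paper's Conjecture \ref{conj3} in the range $r\le e+2$. What actually makes it provable is a deformation-theoretic input absent from your proposal: the Ein--Lazarsfeld bound $C^2\ge m(m-1)$ at a very general point (Theorem \ref{EinLars}), extended to the multi-point inequality $C^2\ge\sum_i m_i^2-m_j$ (Lemma \ref{Xumulti}, which rests on the non-vanishing of a Kodaira--Spencer map, not on counting conditions), combined with the inequality $b\ge\sum_i m_i$ obtained by intersecting the curve with a member of $|C_e+ef|$ through the points (Lemma \ref{b at least sum of M_i's}). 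With those two inputs plus adjunction the classification does go through --- this is how the paper proves Conjecture \ref{conj3} for $r\le e+2$ in Section \ref{LinearSystem}, and how Lemma \ref{-1 and -2 curves for r=e and e+1} pins down the possible classes --- but without them your central step has no proof.

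It is also worth noting that the paper's Section \ref{Computation of Seshadri constants} avoids classifying \emph{all} negative curves. Since $r+1\le e+5$, the surface $Y$ is anti-canonical (Proposition \ref{Condition for a smooth rational surface to be anti-canonical}), so by the structure of the nef cone of anticanonical rational surfaces (Proposition \ref{cartier divisor becoming nef}, after Lahyane--Harbourne and Sano) nefness need only be tested against $(-1)$-curves, $(-2)$-curves, fixed components of $|-K_{Y}|$ and members of $|-K_{Y}|$ (Propositions \ref{compute Seshadri constant} and \ref{our way to compute Seshadri constant}); these restricted families are then determined by Lemmas \ref{a = 0}, \ref{-1 and -2 curves for r=e and e+1} and \ref{fixed components of anti-canonical}, again via Lemmas \ref{Xumulti} and \ref{b at least sum of M_i's}. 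This route also dispenses with your separate check $s_0\le\sqrt{L^2}$, which in your setup still requires a small computation using condition (3) of Proposition \ref{ample} ($b>m_1+\cdots+m_r$) in the cases $r=e,\,e+1$. If you want to salvage your outline, import Lemmas \ref{Xumulti} and \ref{b at least sum of M_i's} and redo your $\alpha\ge2$ case with them; as written, that case is an assertion, not an argument.
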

	Further, when $r \leq e-1$, for a suitable choice of $r$ points, we compute $\varepsilon(\Fer,L;x)$ for all $x \in \Fer$ (see Theorem \ref{actual computation of Seshadri constants}).\\
	
	There are several famous conjectures in literature on  linear systems of curves in $\mathbb{P}^2$; Nagata Conjecture, SHGH conjecture \cite{Ser1961, Har1986, Gim1987, Hir1989}, Weak SHGH conjecture, to name a few. These conjectures also have connections  to Seshadri constants on blow-ups of $\mathbb{P}^2$. 
	
	It is natural to study similar questions on $\Fe$. The version of SHGH given in \cite{Ser1961} has been reformulated for Hirzebruch surface by Laface \cite{Laf2002} (see Conjecture \ref{conj1}). Dumnicki showed that this conjecture holds for imposed base points of equal multiplicity bounded by $8$ (\cite[Theorem 6]{Duminicki2010}). We study these questions in Section \ref{LinearSystem}.   Following \cite{Gim1987}, we propose the following conjecture.
	\begin{repconjecture}{conj2}
		Let  $\widetilde{C}_e$ be the strict transform of $C_e$ on $\mathbb{F}_{e,r}$.
		If a general curve of a non-empty linear system $\Li$ on 
		$\mathbb{F}_{e,r}$ is reduced and $\widetilde{C}_e$ is not a fixed component of $\Li$, then $\Li$ is non-special.
	\end{repconjecture}
	
	We also propose another conjecture on $\mathbb{F}_{e,r}$ which resembles the Weak SHGH conjecture for 
	$\mathbb{P}^2$ (see Conjecture \ref{conj3}). We prove this conjecture for $r \leq e+2$. We also study 
	relations between these conjectures. At the end of the article, we ask a  question which relates these conjectures to 
	the irrationality of Seshadri constants.
	\section{Preliminaries}\label{Preliminaries}
	
	We now state a few important results that will be used later.
	
	\begin{theorem}\cite[Corollary 1.2]{Ein-Lazarsfeld1993} \label{EinLars}
		Let $X$ be a surface and $x\in X$ be a very general point. For a reduced and irreducible curve $C$ on $X$ passing through $x$, we have
		$$C^2\geq m(m-1),$$
		where $m=\text{mult}_xC$.
	\end{theorem}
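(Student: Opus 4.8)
The plan is to realise $C$ as a member of a family whose marked singular point genuinely moves, and then to force a large self-intersection by letting two nearby singular points collide. The very general hypothesis enters only to guarantee that the family moves.

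First I would reduce to a moving family. Fix the numerical class (equivalently, a component of the Chow variety) containing $C$. The set $W$ of pairs $(C',y)$ with $C'$ a reduced irreducible curve in that component and $\text{mult}_y C'\ge m$ is locally closed, and the Chow variety of $X$ has only countably many components. For each component, project $W\to X$, $(C',y)\mapsto y$; if the image is not dense it lies in a proper closed subset. Since $x$ is very general it avoids the countable union of all these proper closed subsets, so the component through our pair $(C,x)$ must dominate $X$ under this projection. Thus, after restricting, I obtain an irreducible family $\{C_t\}_{t\in T}$ of reduced irreducible curves in the class of $C$, with marked points $x_t\in C_t$ satisfying $\text{mult}_{x_t}C_t\ge m$, such that $t\mapsto x_t$ dominates $X$. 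In particular $\dim T\ge 2$ and the marked points move nontrivially.

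Next I would cut down to one parameter. Replacing $T$ by a general complete-intersection curve $B\subseteq T$ through the parameter of $C$, I obtain a one-parameter family $\{C_b\}_{b\in B}$ over a smooth curve $B$ whose marked points $x_b$ still trace a non-constant arc in $X$ and satisfy $\text{mult}_{x_b}C_b\ge m$. For $m\ge 2$ the members are automatically distinct curves, since an irreducible curve has only finitely many points of multiplicity $\ge 2$; hence $C_{b_1}\ne C_{b_2}$ for general $b_1\ne b_2$ and $C_{b_1}\cdot C_{b_2}=C^2$ is a proper intersection. (For $m=1$ the desired bound $C^2\ge 0$ already follows from the distinctness of two general moving members.)

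The crux is the local estimate. Two general members meet in $C^2$ points counted with multiplicity, so it suffices to bound from below the portion of $C_{b_1}\cdot C_{b_2}$ concentrated near the marked points as $b_2\to b_1$. I would choose local coordinates $(u,v)$ at $x=x_{b_1}$ so that the arc of marked points is $\{v=0\}$ and $x_b$ lies at $u\approx b$; each $C_b$ then has an $m$-fold point at $x_b$, so its local equation has a degree-$m$ tangent cone there. As $b_2\to b_1$ the two $m$-fold points collide, and by conservation of intersection number the local contribution near $x$ is constant for $0<|b_2-b_1|\ll 1$ and equals its limiting value. A model computation shows this value is $\ge m(m-1)$: for representative tangent cones $C_{b_1}\colon u^m+v^m=0$ and $C_{b_2}\colon (u-b)^m+v^m=0$, eliminating $v$ gives $u^m=(u-b)^m$, which has $m-1$ roots in $u$, each yielding $m$ values of $v$, for a total of $m(m-1)$ intersection points near $x$; the general case dominates this model. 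Hence $C^2=C_{b_1}\cdot C_{b_2}\ge m(m-1)$. I expect the main obstacle to be making this collision estimate rigorous and independent of the particular tangent cones and of how the marked point moves; the clean way is to blow up the arc of marked points inside $X\times B$ and compute the intersection of the strict transforms of $C_{b_1}$ and $C_{b_2}$, while keeping track of the fact — secured by the very general hypothesis and the countability of Chow components — that the marked points genuinely move.
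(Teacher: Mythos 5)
Your reduction steps are sound and coincide with the paper's: the paper likewise produces, from the very generality of $x$ and the countability of the relevant parameter spaces, a non-trivial family $(C_t,x_t)_{t\in T}$ with $\text{mult}_{x_t}C_t\ge m$, citing \cite{Kol1995} and \cite{Nak2003} for the construction. The divergence is in what happens next: the paper does not reprove the inequality but feeds the non-trivial family (non-zero Kodaira--Spencer map) into \cite[Corollary 1.2]{Ein-Lazarsfeld1993} as a black box, whereas you attempt a direct proof by colliding two nearby members. Your observation that for $m\ge 2$ the curves themselves must move (else a fixed irreducible curve would have infinitely many points of multiplicity $\ge 2$) is correct, as is the $m=1$ case.

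The genuine gap is at the crux, in the twin assertions that ``by conservation of intersection number the local contribution near $x$ is constant for $0<|b_2-b_1|\ll 1$ and equals its limiting value'' and that ``the general case dominates this model.'' As $b_2\to b_1$ the pair $(C_{b_1},C_{b_2})$ degenerates to a pair with a \emph{common component}, so continuity of intersection numbers in a fixed ball does not apply to the naive limit: nothing you have said rules out that all the local intersection escapes along $C_{b_1}$ as the marked points collide, and the ``limiting value'' of the pair is not even defined. Moreover the domination claim has the semicontinuity backwards: local intersection multiplicities jump \emph{up} at special configurations, so a lower bound for the general case cannot be extracted by comparison with one model pair of tangent cones such as $u^m+v^m=0$ and $(u-b)^m+v^m=0$ (the marked point may move tangentially to a branch, the tangent cones may be degenerate, etc.). The standard way to make the collision rigorous is precisely the derivative trick underlying the result the paper cites: locally one has the equality of ideals $(f_{b_1},f_{b_2})=\bigl(f_{b_1},\,(f_{b_2}-f_{b_1})/(b_2-b_1)\bigr)$, and letting $b_2\to b_1$ replaces $C_{b_2}$ by the ``derivative divisor'' $D$ of $\partial_b f$, which vanishes to order $\ge m-1$ at $x$ because $\text{mult}_{x_b}C_b\ge m$ with $x_b$ moving; non-triviality of the family guarantees $D\ne C$ for general $b_1$, whence $C^2=C\cdot D\ge \text{mult}_x C\cdot \text{mult}_x D\ge m(m-1)$. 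This is exactly Ein--Lazarsfeld's argument, i.e.\ the content of the black box. Your proposed blow-up of the arc of marked points in $X\times B$ does not by itself supply the bound either: restricting the strict transform of the total family to fibers only returns identities such as $(\pi^*C-kE)^2=C^2-k^2$ together with effectivity constraints, and the missing positivity input is again the derivative section. So the architecture is right and you have correctly located the obstacle, but the key inequality is asserted rather than proved.
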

	\begin{proof}
		Since $C$ passes through a very general point with multiplicity $m$ there is a non-trivial family $(C_t,x_t)_{t\in T}$ of irreducible and reduced curves $C_t$ and points $x_t \in C_t$ parametrized by a variety $T$ such that $C_t$ is numerically equivalent to $C$ and $\text{mult}_{x_t}C_t \ge m$ for every $t \in T$ and $C_{t_0} = C$, $x_{t_0} = x$ for some $t_0 \in T$. For more details about this family, see \cite[Proposition 2.5]{Kol1995} and 
		\cite[Proof of Theorem 2, Page 209]{Nak2003}.  
		
		Since the family $(C_t,x_t)_{t\in T}$ is non-trivial, the Kodaira-Spencer map for a general element of $T$ is non-zero. Then we can apply \cite[Corollary 1.2]{Ein-Lazarsfeld1993} to obtain the desired inequality. 
	\end{proof} 
	Next lemma generalises the above theorem to the multi-point case:
	\begin{lemma}\label{Xumulti}
		Let $X$ be a smooth projective surface and $p_1,p_2,\dots, p_r$ be very general points on $X$. Let $C$ be an irreducible reduced passing through at least one $p_i$ and let $m_i=\text{mult}_{p_i}C$. Then
		$$C^2\geq \left(\sum_{i=1}^{r}m_i^2\right)-m_j$$
		for any $m_j>0$.
		\begin{proof}
			Assume without loss of generality that $j=r$ and $m_r>0$.
			Let $\pi:X_{r-1}\to X$ be the blow up of $X$ at $p_1,p_2,\dots p_{r-1}$. Then the strict transform of $C$ on $X_{r-1}$ is 
			$$\widetilde{C}=\pi^\ast(C)-m_1E_1-m_2E_2-\dots-m_{r-1}E_{r-1}.$$
			So $\widetilde{C}$ passes through $x:=\pi^{-1}(\{p_r\})$ and $\text{mult}_x\widetilde{C}=m_r$. As $x$ is a very general point of $X_{r-1}$,
			using Theorem \ref{EinLars}, we have
			$$\widetilde{C}^2\ge \text{mult}_x\widetilde{C}(\text{mult}_x\widetilde{C}-1)=m_r(m_r-1).$$
			Note that $\displaystyle \widetilde{C}^2=C^2-\sum_{i=1}^{r-1}m_i^2$.
			Hence
			$$C^2\ge \sum_{i=1}^{r}m_i^2-m_r.$$		\end{proof}
	\end{lemma}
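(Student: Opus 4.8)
The plan is to reduce the multi-point statement to the single-point result already established in Theorem~\ref{EinLars}. The key observation is that an inequality of the form $C^2 \geq \left(\sum_{i=1}^r m_i^2\right) - m_j$ involving the self-intersection of $C$ and the multiplicities $m_i = \text{mult}_{p_i}C$ is exactly what one gets by blowing up all but one of the points and applying the single-point bound to the strict transform. Since the inequality is symmetric in the roles of the points in the sense that we only need \emph{some} $m_j > 0$, I would first fix an index $j$ with $m_j > 0$ (such an index exists because $C$ passes through at least one $p_i$) and, after relabeling, assume $j = r$.

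First I would blow up $X$ at the $r-1$ points $p_1, \dots, p_{r-1}$, obtaining $\pi : X_{r-1} \to X$ with exceptional divisors $E_1, \dots, E_{r-1}$. The strict transform of $C$ is then $\widetilde{C} = \pi^*(C) - \sum_{i=1}^{r-1} m_i E_i$. The crucial point is that blowing up at very general points preserves very-generality of the remaining point: the preimage $x$ of $p_r$ is a very general point of $X_{r-1}$, and $\widetilde{C}$ passes through $x$ with $\text{mult}_x \widetilde{C} = m_r$ (since $p_r$ is distinct from $p_1, \dots, p_{r-1}$, blowing up those points does not affect the local geometry of $C$ at $p_r$).

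Next I would apply Theorem~\ref{EinLars} to the surface $X_{r-1}$, the very general point $x$, and the irreducible reduced curve $\widetilde{C}$, yielding $\widetilde{C}^2 \geq m_r(m_r - 1)$. Finally, I would compute $\widetilde{C}^2$ in terms of $C^2$: using $\pi^*(C)^2 = C^2$, $E_i^2 = -1$, $\pi^*(C)\cdot E_i = 0$, and $E_i \cdot E_j = 0$ for $i \neq j$, one gets $\widetilde{C}^2 = C^2 - \sum_{i=1}^{r-1} m_i^2$. Substituting into the Ein--Lazarsfeld inequality and rearranging gives $C^2 \geq \sum_{i=1}^{r-1} m_i^2 + m_r^2 - m_r = \sum_{i=1}^r m_i^2 - m_r$, which is the claim.

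The main subtlety to be careful about is ensuring that $\widetilde{C}$ remains irreducible and reduced after taking strict transforms (this is automatic, since strict transform under a birational morphism preserves irreducibility and reducedness) and, more importantly, that the very-generality hypothesis transfers correctly: the family deformation argument underlying Theorem~\ref{EinLars} requires $x$ to be very general in $X_{r-1}$, and this follows because $p_r$ is very general in $X$ independently of $p_1, \dots, p_{r-1}$. No genuine obstacle arises here; the proof is essentially a bookkeeping reduction, with the only real content borrowed from the single-point Ein--Lazarsfeld bound.
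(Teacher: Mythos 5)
Your proposal is correct and follows essentially the same route as the paper's own proof: blow up the $r-1$ points other than $p_j$, observe that the strict transform $\widetilde{C}$ passes through the very general point $x$ lying over $p_r$ with multiplicity $m_r$, apply Theorem \ref{EinLars} to get $\widetilde{C}^2 \geq m_r(m_r-1)$, and use $\widetilde{C}^2 = C^2 - \sum_{i=1}^{r-1} m_i^2$ to conclude. The additional remarks you make about irreducibility of the strict transform and the transfer of very-generality are exactly the implicit points the paper relies on, so there is nothing to change.
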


	\begin{prop}
		\label{cartier divisor becoming nef} 
		Let $S$ be a smooth rational surface with Picard rank $\rho(S) \ge 3$. Assume that $S$ is anti-canonical; that is, $|-K_S| \neq \emptyset$, where $K_S$ is the canonical divisor class on $S$. Then we have the following. \\
		\begin{itemize}
			\item[(i)] $\overline{NE}(S) = \mathbb{R}_{\geq 0}[-K_S] + \sum\limits_{\substack{C \subset S~ {\rm an} \\ {\rm irred. curve} \\{\rm with }~ C^2 < 0}}\mathbb{R}_{\geq 0}[C]$. \\
			\item[(ii)] Let $C \subset S$ be an irreducible reduced curve such that $C^2 < 0$. Then $C$ is either a $(-1)$-curve, a $(-2)$-curve or a fixed component of $|-K_S|$.\\
			\item[(iii)]  Let $D$ be a Cartier divisor on $S$. Then $D$ is nef if and only if $D\cdot C \geq 0$ for all $C$ such that $C$ is either a $(-1)$-curve, a $(-2)$-curve, a fixed component of $|-K_S|$ or in $|-K_S|$.
		\end{itemize}
	\end{prop}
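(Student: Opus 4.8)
The plan is to establish the three parts in the order (ii), (i), (iii): part (ii) is the purely local computation, part (i) is the structural core, and part (iii) will then follow formally from (i) by convex duality.

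For (ii), let $C$ be an irreducible reduced curve with $C^2<0$. Adjunction gives $K_S\cdot C = 2p_a(C)-2-C^2$ with $p_a(C)\geq 0$, and I would split according to the sign of $-K_S\cdot C$. If $-K_S\cdot C<0$, then because $-K_S$ is effective and $C$ is irreducible, $C$ must occur in every member of $|-K_S|$, i.e.\ $C$ is a fixed component of $|-K_S|$. If $-K_S\cdot C=0$, then $K_S\cdot C=0$ and $C^2=2p_a(C)-2<0$ force $p_a(C)=0$ and $C^2=-2$, a $(-2)$-curve. If $-K_S\cdot C>0$, then $K_S\cdot C\leq -1$ and $C^2\leq -1$ give $2p_a(C)-2=C^2+K_S\cdot C\leq -2$, whence $p_a(C)=0$ and $C^2=K_S\cdot C=-1$, a $(-1)$-curve. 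This is routine, and it is where effectivity of $-K_S$ first enters.

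For (i), the inclusion $\supseteq$ is immediate since $-K_S$ is effective and all the listed curves are effective. For $\subseteq$, I would invoke Mori's cone theorem, $\overline{NE}(S)=\overline{NE}(S)_{K_S\geq 0}+\sum_j\mathbb{R}_{\geq 0}[R_j]$, where the $R_j$ are the $K_S$-negative extremal rays. The essential role of the hypothesis $\rho(S)\geq 3$ is that a smooth surface carrying a fiber-type (conic bundle) extremal contraction has Picard rank $2$, and one carrying a contraction to a point is $\mathbb{P}^2$ with Picard rank $1$; hence for $\rho(S)\geq 3$ every $K_S$-negative extremal ray is divisorial and generated by a $(-1)$-curve, which lies in the target cone by (ii). It then remains to prove that $\overline{NE}(S)_{K_S\geq 0}$ is contained in $\mathbb{R}_{\geq 0}[-K_S]+\sum_{C^2<0}\mathbb{R}_{\geq 0}[C]$.

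Since $\overline{NE}(S)$ is a pointed closed convex cone (an ample class is strictly positive on it), this last inclusion reduces to checking that each extremal ray of $\overline{NE}(S)$ lies in the target cone. A class spanning an extremal ray cannot have positive self-intersection, as such a class is big and hence interior to $\overline{NE}(S)$; so an extremal ray is generated either by a class of negative self-intersection, necessarily an irreducible negative curve and therefore in the target cone, or by a nef class $v$ with $v^2=0$. For the latter, since the $K_S$-negative extremal rays are the $(-1)$-curves, such a $v$ has $K_S\cdot v\geq 0$; as $v$ is nef and $-K_S$ effective we also have $-K_S\cdot v\geq 0$, so $-K_S\cdot v=0$. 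I would now split on the sign of $(-K_S)^2$ using the Hodge index theorem. If $(-K_S)^2>0$ then $-K_S$ is big and $(-K_S)^\perp$ is negative definite, forcing $v=0$; if $(-K_S)^2=0$ then $v$ and $-K_S$ are orthogonal null vectors in the closure of the positive cone, hence proportional, so $v\in\mathbb{R}_{\geq 0}[-K_S]$. \emph{The delicate case, which I expect to be the main obstacle, is $(-K_S)^2<0$}: here $-K_S$ is not nef, Hodge index no longer eliminates $v$, and one must instead show directly that the square-zero nef class $v$ is a non-negative combination of $-K_S$ and negative curves, controlling the components of an effective representative of $v$ by the curves in its null locus (which have negative self-intersection by Hodge index). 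Riemann--Roch with $\chi(\mathcal{O}_S)=1$, together with $h^2(v)=h^0(K_S-v)=0$ (valid since $(K_S-v)\cdot H<0$ for ample $H$ because $-K_S$ is effective), supplies the effective representatives needed for this step.

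Finally, (iii) follows formally from (i) and (ii). The forward implication is trivial, a nef divisor meeting every effective curve non-negatively. Conversely, by (i) a divisor $D$ is nef if and only if $D\cdot(-K_S)\geq 0$ and $D\cdot C\geq 0$ for every negative curve $C$; by (ii) the negative curves are exactly the $(-1)$-curves, the $(-2)$-curves, and the fixed components of $|-K_S|$, while the condition $D\cdot(-K_S)\geq 0$ is the same as $D\cdot C\geq 0$ for a member $C\in|-K_S|$ (such $C\sim -K_S$). This is precisely the list in (iii), so the equivalence holds by duality between $\overline{NE}(S)$ and the nef cone.
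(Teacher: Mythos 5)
Your parts (ii) and (iii) are correct: (ii) is the standard adjunction argument (essentially a proof of the result the paper simply cites from Sano), and your derivation of (iii) from (i) and (ii) is exactly what the paper does. Note that the paper does not prove (i) or (ii) at all --- it quotes Lahyane--Harbourne, Lemma 4.1, for (i) and Sano, Proposition 4.1(ii), for (ii) --- so the substance of your proposal is the from-scratch argument for (i), and that argument has a genuine gap, located precisely where you flagged it.

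In the case $(-K_S)^2<0$ you propose to take an effective representative of the square-zero nef extremal class $v$ (which satisfies $K_S\cdot v=0$) and control its components by the null locus of $v$, ``which have negative self-intersection by Hodge index.'' Hodge index does not give that: for an irreducible component $C_i$ with $v\cdot C_i=0$ it gives only $C_i^2\le 0$, with equality exactly when $[C_i]$ is proportional to $v$. The equality case is the crux, not a technicality: one is left with an irreducible curve $C$ with $C^2=0$ and $K_S\cdot C=0$ (so $p_a(C)=1$) spanning the ray of $v$, and one must still show that such a class is a non-negative combination of $-K_S$ and negative curves even though $-K_S$ is not nef. This situation really occurs --- e.g.\ blow up a relatively minimal rational elliptic surface at a point: the general fiber $v$ is irreducible with $v^2=K\cdot v=0$ and $(-K)^2=-1$, and its membership in the cone comes from the special reducible member $v=-K_S+E$, not from a general (irreducible) representative --- so an argument via an arbitrary effective representative of $v$ cannot close this case; one needs to relate $v$ to $-K_S$ directly (for instance by applying Riemann--Roch to $v+K_S$ rather than to $v$), which is essentially the content of the cited Lahyane--Harbourne lemma. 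Two further unaddressed points in the same case: Riemann--Roch applies only to integral classes, and you have not shown that this extremal ray is rational (in your subcases $(-K_S)^2\ge 0$ rationality was free, here it is not); and checking the inclusion extremal ray by extremal ray can only ever yield $\overline{NE}(S)\subseteq\overline{T}$, where $T$ is the right-hand cone in (i) --- since $T$ is a sum of possibly infinitely many rays it is not a priori closed, so the equality asserted in (i) needs a separate argument, although this last point is harmless for deducing (iii).
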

	\begin{proof}	
		\begin{itemize}
			\item[(i)] This follows from \cite[Lemma 4.1]{Lahyane-Harbourne2005}.
			\item[(ii)] The proof of this can be found in \cite[Proposition 4.1 (ii)]{Sano2014}.
			\item[(iii)] This follows from (i) and (ii).
		\end{itemize}
	\end{proof}
	
	As $\mathbb{F}_{e,r}$ is a smooth rational surface, Proposition \ref{cartier divisor becoming nef} applies to it if it is anti-canonical. This is true in certain cases:   
	\begin{prop} \cite[Proposition 3]{Medina-Lahanye2020} \label{Condition for a smooth rational surface to be anti-canonical}
		The surface $\mathbb{F}_{e,r}$ is anti-canonical if $r \leq e+5$.   
		\begin{remark}
			In \cite{Medina-Lahanye2020}, it is assumed that the $r$ points that are getting blown-up are in general position. However, we wish to emphasize that the same proof goes through for $r$ distinct points, not necessarily general.  
		\end{remark}
	\end{prop}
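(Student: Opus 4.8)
The plan is to prove non-emptiness of $|-K_{\mathbb{F}_{e,r}}|$ directly, since this is precisely what it means for $\mathbb{F}_{e,r}$ to be anti-canonical. Using the given expression $K_{\mathbb{F}_{e,r}} \sim -2H_e - (e+2)F_e + \sum_{i=1}^r E_i$, we have $-K_{\mathbb{F}_{e,r}} \sim \pi^*(-K_{\mathbb{F}_e}) - \sum_{i=1}^r E_i$, where $-K_{\mathbb{F}_e} \sim 2C_e + (e+2)f$. Thus it suffices to exhibit a curve $D \in |-K_{\mathbb{F}_e}|$ passing through each of the points $p_1,\dots,p_r$: if $\mathrm{mult}_{p_i}(D) \ge 1$ for every $i$, then writing $\pi^*D = \widetilde{D} + \sum_i \mathrm{mult}_{p_i}(D)\,E_i$ gives
\[
-K_{\mathbb{F}_{e,r}} \sim \pi^*D - \sum_{i=1}^r E_i = \widetilde{D} + \sum_{i=1}^r \bigl(\mathrm{mult}_{p_i}(D)-1\bigr)E_i,
\]
which is a sum of effective divisors and hence an effective anticanonical divisor on $\mathbb{F}_{e,r}$.

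To produce such a $D$, first I would compute the dimension of $|-K_{\mathbb{F}_e}| = |2C_e + (e+2)f|$. Pushing forward along $\mathbb{F}_e \to \mathbb{P}^1$ and using $\mathrm{Sym}^2(\mathcal{O}_{\mathbb{P}^1} \oplus \mathcal{O}_{\mathbb{P}^1}(-e)) \cong \mathcal{O}_{\mathbb{P}^1} \oplus \mathcal{O}_{\mathbb{P}^1}(-e) \oplus \mathcal{O}_{\mathbb{P}^1}(-2e)$, one obtains
\[
h^0(\mathbb{F}_e, -K_{\mathbb{F}_e}) = h^0\bigl(\mathbb{P}^1, \mathcal{O}(e+2)\bigr) + h^0\bigl(\mathbb{P}^1, \mathcal{O}(2)\bigr) + h^0\bigl(\mathbb{P}^1, \mathcal{O}(2-e)\bigr),
\]
so that $h^0(-K_{\mathbb{F}_e}) = e+6$ for $e \ge 3$ and $h^0(-K_{\mathbb{F}_e}) = 9$ for $e \in \{0,1,2\}$. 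In all cases $\dim|-K_{\mathbb{F}_e}| \ge e+5$.

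Next, passing through a single point imposes at most one linear condition on a linear system, so the subsystem of members of $|-K_{\mathbb{F}_e}|$ vanishing at all of $p_1,\dots,p_r$ has projective dimension at least $\dim|-K_{\mathbb{F}_e}| - r \ge (e+5) - r \ge 0$ whenever $r \le e+5$. A system of non-negative dimension is non-empty, so the required curve $D$ exists and $\mathbb{F}_{e,r}$ is anti-canonical. I would emphasize that this argument uses only the bound of one condition per point, which holds for \emph{any} distinct points with no general-position hypothesis; this is exactly the content of the Remark.

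This is a direct verification rather than one with a genuine obstacle, but the point requiring the most care is the uniformity of the inequality $\dim|-K_{\mathbb{F}_e}| \ge e+5$ across the small values $e \in \{0,1,2,3\}$. There the third summand $h^0(\mathbb{P}^1, \mathcal{O}(2-e))$ is nonzero (or the bound saturates), the dimension equals the constant $8$, and one checks $8 \ge e+5$ precisely for $e \le 3$; combined with the equality $\dim = e+5$ for $e \ge 3$, the bound holds for every $e \ge 0$.
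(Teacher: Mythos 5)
Your proof is correct. Note, however, that the paper does not actually prove this proposition at all: it simply cites \cite[Proposition 3]{Medina-Lahanye2020} and appends a remark asserting that the cited proof works for arbitrary distinct points. Your argument is therefore a genuinely self-contained alternative, and a good one. The three ingredients are all sound: the pushforward computation $\pi_*\mathcal{O}_{\mathbb{F}_e}(2C_e+(e+2)f) \cong \mathcal{O}_{\mathbb{P}^1}(e+2)\oplus\mathcal{O}_{\mathbb{P}^1}(2)\oplus\mathcal{O}_{\mathbb{P}^1}(2-e)$ gives $h^0(-K_{\mathbb{F}_e}) = e+6$ for $e\ge 3$ and $9$ for $e\le 2$, hence $\dim|-K_{\mathbb{F}_e}|\ge e+5$ uniformly (note this does \emph{not} follow from Riemann--Roch alone, which only gives $h^0\ge K^2+1=9$, so the pushforward step is genuinely needed when $e\ge 4$); each point imposes at most one linear condition, so the subsystem through $p_1,\dots,p_r$ is non-empty once $r\le e+5$; and the total-transform identity $\pi^*D = \widetilde{D}+\sum_i \mathrm{mult}_{p_i}(D)E_i$ converts such a $D$ into an effective member of $|-K_{\mathbb{F}_{e,r}}|$. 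What your route buys beyond the citation is precisely the content of the paper's remark: since the one-condition-per-point bound holds for any point whatsoever, your proof makes it transparent that no general-position hypothesis is needed, whereas the paper must assert this about someone else's argument. The only stylistic blemish is the slightly tangled final paragraph about small $e$; the clean statement is simply that $\dim|-K_{\mathbb{F}_e}| = \max(8, e+5) \ge e+5$ for every $e\ge 0$.
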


	Suppose that $\pi_x : \widetilde{X} = \text{Bl}_x(X) \rightarrow X$ is the blow-up of $X$ at a point $x$ and $E_x$ is the corresponding exceptional divisor in $\widetilde{X}$. Then by  \eqref{2nd definition of Seshadri constant}, we know that for a nef line bundle $L$ in $X$, the Seshadri constant $\varepsilon(X, L; x)$ is obtained by taking the supremum of all non-negative real numbers $s$ such that $\pi_x^*(L) - sE_x$ is nef in $\widetilde{X}$. However, by (iii) of Proposition \ref{cartier divisor becoming nef}, to say that $\pi_x^*(L) - sE_x$ is nef in $\widetilde{X}$, it is enough to check it on $(-1)$-curves, $(-2)$-curves, curves which are fixed components of $|-K_{\widetilde{X}}|$ and curves in the linear system $|-K_{\widetilde{X}}|$, provided the surface is anti-canonical. Let $\mathcal{S}'$ denote the set consisting of all $(-1)$-curves, $(-2)$-curves, curves which are fixed components of $|-K_{\widetilde{X}}|$ and curves in the linear system $|-K_{\widetilde{X}}|$. Then we have the following result.
	\begin{prop}\label{compute Seshadri constant}
		Let $X$ be a smooth rational surface with Picard rank $\rho(X) \ge 2$. Let $x \in X$ and $L$ be a nef line bundle on $X$. If $\widetilde{X}$ is anti-canonical then the Seshadri constant of $L$ at $x$ is given by 
		\begin{equation}\label{1epsilon1}
			\varepsilon(X, L; x) = \inf \Bigg\{\frac{L\cdot C}{{\rm mult}_x C}~ \Bigg \vert ~ \widetilde{C} \in \mathcal{S}'\Bigg\},
		\end{equation}
		where $\mathcal{S}'$ is defined as above and $C$ is a curve in $X$ with $\widetilde{C}$ as its strict transform in $\widetilde{X}$.
	\end{prop}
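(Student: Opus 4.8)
The plan is to combine the blow-up description of the Seshadri constant in \eqref{2nd definition of Seshadri constant} with the nefness criterion of Proposition \ref{cartier divisor becoming nef}(iii), applied to the one-point blow-up $\pi_x \colon \widetilde{X} \to X$. By \eqref{2nd definition of Seshadri constant}, $\varepsilon(X,L;x)$ is the supremum of those $s \ge 0$ for which $\pi_x^{*}(L) - sE_x$ is nef on $\widetilde{X}$, so the whole computation reduces to deciding nefness of this class and then optimizing over $s$.

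First I would verify that Proposition \ref{cartier divisor becoming nef} applies to $\widetilde{X}$. Since $\widetilde{X} = \mathrm{Bl}_x(X)$ is again a smooth rational surface with $\rho(\widetilde{X}) = \rho(X) + 1 \ge 3$, and $\widetilde{X}$ is anti-canonical by hypothesis, part (iii) of that proposition says that $\pi_x^{*}(L) - sE_x$ is nef if and only if $(\pi_x^{*}(L) - sE_x)\cdot C' \ge 0$ for every $C' \in \mathcal{S}'$. This is the key step that replaces the infimum over \emph{all} curves in the definition of the Seshadri constant by the finite list of test classes in $\mathcal{S}'$.

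Next I would carry out the intersection computation. Every irreducible curve on $\widetilde{X}$ is either $E_x$ or the strict transform $\widetilde{C}$ of an irreducible curve $C$ on $X$, and in the latter case $\widetilde{C} = \pi_x^{*}(C) - m E_x$ with $m = \mathrm{mult}_x C$. Using $E_x^2 = -1$, $\pi_x^{*}(L)\cdot E_x = 0$, $E_x \cdot \pi_x^{*}(C) = 0$ and $\pi_x^{*}(L)\cdot \pi_x^{*}(C) = L\cdot C$ from the projection formula, a direct calculation gives
\[
(\pi_x^{*}(L) - sE_x)\cdot \widetilde{C} = L\cdot C - s\,\mathrm{mult}_x C .
\]
For the test class $C' = E_x$ one instead gets $(\pi_x^{*}(L) - sE_x)\cdot E_x = s \ge 0$, which imposes no condition; and for a curve $C$ with $\mathrm{mult}_x C = 0$ one gets $\pi_x^{*}(L)\cdot \pi_x^{*}(C) = L\cdot C \ge 0$, which holds automatically because $L$ is nef. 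For the possibly reducible members $D \in |-K_{\widetilde{X}}|$ I would note that the intersection number depends only on the linear equivalence class $-K_{\widetilde{X}} = \pi_x^{*}(-K_X) - E_x$, so the displayed formula applies with the corresponding curve $C \in |-K_X|$ of multiplicity one at $x$.

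Putting this together, $\pi_x^{*}(L) - sE_x$ is nef precisely when $s \le \frac{L\cdot C}{\mathrm{mult}_x C}$ for every $\widetilde{C} \in \mathcal{S}'$ that is the strict transform of a curve $C$ passing through $x$; the remaining test classes impose no constraint. Hence the set of admissible $s$ is the interval $\big[0,\ \inf_{\widetilde{C}\in \mathcal{S}'}\tfrac{L\cdot C}{\mathrm{mult}_x C}\big]$, and taking the supremum yields \eqref{1epsilon1}. I expect the step requiring the most care to be this final translation: one must check that the two kinds of elements of $\mathcal{S}'$ that do not give a genuine Seshadri ratio — namely $E_x$ itself and curves avoiding $x$ — impose no restriction, so that the supremum of nef values of $s$ coincides exactly with the infimum of Seshadri ratios over the remaining curves of $\mathcal{S}'$.
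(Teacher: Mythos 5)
Your proposal is correct and takes essentially the same approach as the paper: both combine the blow-up characterization \eqref{2nd definition of Seshadri constant} with the nefness test of Proposition \ref{cartier divisor becoming nef}(iii) applied to $\widetilde{X}$ (using $\rho(\widetilde{X})=\rho(X)+1\ge 3$ and the anti-canonical hypothesis), and then translate each test class into the inequality $L\cdot C - s\,\mathrm{mult}_x C \ge 0$, so that the admissible $s$ form the interval whose right endpoint is the claimed infimum. You are in fact somewhat more explicit than the paper about the test classes that impose no constraint --- $E_x$, strict transforms of curves missing $x$, and reducible members of $|-K_{\widetilde{X}}|$ (the paper treats only $E_x$ separately) --- though your assertion that a member of $|-K_{\widetilde{X}}|$ corresponds to a curve in $|-K_X|$ with multiplicity exactly one at $x$ is, like the paper's silence on this same point, justified only when $E_x$ is not a fixed component of $|-K_{\widetilde{X}}|$.
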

	\begin{proof} As $X$ has Picard rank at least two, $\widetilde{X}$ has Picard rank at least three. So
		by part (iii) of Proposition \ref{cartier divisor becoming nef}, we have 
		\begin{equation}\label{new expression for epsilon}
			\varepsilon(X, L; x) = \text{sup} ~ \{s \in \mathbb{R}~\vert~(\pi_x^*(L) - sE_x)\cdot C^{'}\geq 0, ~~ \forall~ C^{'} \in \mathcal{S}' \}.
		\end{equation}
		Let us denote this number by $\varepsilon$, for convenience. Let $\varepsilon_{1}$ denote the number on the right hand side of  \eqref{1epsilon1}. Note that $\varepsilon_1 \geq 0$ as $L$ is nef. We claim that $\varepsilon = \varepsilon_{1}$. 
		
		Let $s \in \mathbb{R}$ be such that $(\pi_x^*(L) - sE_x)\cdot C^{'}\geq 0,~ \forall~ C^{'} \in \mathcal{S}'$. In particular, this implies that for every curve $C \subset X$ such that $\widetilde{C} \in \mathcal{S}'$, $(\pi_x^*(L) - sE_x)\cdot \widetilde{C}\geq 0$. This means
		\[
		\begin{split}
			(\pi_x^*(L) - sE_x)\cdot (\pi_x^*(C)-(\text{mult}_xC)E_x)  & \geq 0  \\
			\implies L\cdot C - s~ \text{mult}_xC & \geq 0 \\
			\implies \frac{L\cdot C}{\text{mult}_xC} & \geq s \\
			\implies \frac{L\cdot C}{\text{mult}_xC} & \geq \varepsilon \\
			\implies \varepsilon_1 & \geq \varepsilon.
		\end{split}
		\]
		Conversely, for $C \subset X$ such that $\widetilde{C} \in \mathcal{S}'$, consider $(\pi_x^*(L) - \varepsilon_1 E_x)\cdot \widetilde{C}$. We have
		\begin{eqnarray*}
			(\pi_x^*(L) - \varepsilon_1 E_x)\cdot \widetilde{C} & = & (\pi_x^*(L) - \varepsilon_1 E_x)\cdot (\pi_x^*(C)-(\text{mult}_xC)E_x)  \\
			& = &L\cdot C - \varepsilon_1 ~ \text{mult}_xC~\geq 0,  
		\end{eqnarray*}
		because $\varepsilon_1$ is, by definition, the infimum of all the ratios of the form $\frac{L\cdot C}{\text{mult}_x C}$, where $\widetilde{C} \in \mathcal{S}'$. Also, if $E_x \in \mathcal{S}'$, then $(\pi_x^*(L) - \varepsilon_1 E_x)\cdot E_x = \varepsilon_1 \geq 0$. This means for every $C^{'} \in \mathcal{S}'$, $(\pi_x^*(L) - \varepsilon_1 E_x)\cdot  C^{'} \geq 0$. This in turn implies $\varepsilon \geq \varepsilon_1$, as $\varepsilon$ is the supremum of all such values. This completes the proof.
	\end{proof}

	\section{Seshadri constants}\label{Computation of Seshadri constants}
	Let	 $\pi_x : \widetilde{\mathbb{F}_{e,r}} = \text{Bl}_x(\mathbb{F}_{e,r}) \rightarrow \mathbb{F}_{e,r}$ be the blow-up of $\mathbb{F}_{e,r}$ at a point $x$ and let $E_x$ be the corresponding exceptional divisor in $\widetilde{\mathbb{F}_{e,r}}$.
	Let $\mathcal{S}'$ denote a set of curves in $\widetilde{\mathbb{F}_{e,r}}$
	consisting of the following:  $(-1)$-curves, $(-2)$-curves, curves which are fixed components of
	$|-K_{\widetilde{\mathbb{F}_{e,r}}}|$ and curves in the linear system $|-K_{\widetilde{\mathbb{F}_{e,r}}}|$. 
	Let $\mathcal{S} \subset \mathcal{S}'$ denote the set consisting of all $(-1)$-curves, $(-2)$-curves and curves which are the fixed components of $|-K_{\widetilde{\mathbb{F}_{e,r}}}|$. 
	Then we have the following result.
	\begin{prop}\label{our way to compute Seshadri constant}
		Let $e \ge 0$, $r \leq e+4$, $x \in \mathbb{F}_{e,r}$.  Let $L = aH_e + b F_e - \sum\limits_{i=1}^r m_i E_i$ be a nef line bundle on $\mathbb{F}_{e,r}$. Then the Seshadri constant of $L$ at $x$ is given by 
		\begin{equation}\label{epsilon1}
			\varepsilon(\mathbb{F}_{e,r}, L; x) = \inf \Bigg\{\frac{L\cdot C}{{\rm mult}_x C}~ \Bigg \vert ~ \widetilde{C} \in \mathcal{S}\Bigg\},
		\end{equation}
		where $\mathcal{S}$ is as defined just before the proposition and $C$ is a curve in $\mathbb{F}_{e,r}$ with $\widetilde{C}$ as its strict transform on $\widetilde{\mathbb{F}_{e,r}}$.
	\end{prop}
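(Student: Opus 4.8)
The plan is to show that the two sets $\mathcal{S}\subseteq\mathcal{S}'$ produce the same infimum, the only difference being the members of $|-K_{\widetilde{\mathbb{F}_{e,r}}}|$ that lie in $\mathcal{S}'\setminus\mathcal{S}$. First I would observe that the hypothesis $r\le e+4$ is exactly what makes the ambient surface anti-canonical: $\widetilde{\mathbb{F}_{e,r}}=\mathrm{Bl}_x(\mathbb{F}_{e,r})$ is the blow-up of $\mathbb{F}_e$ at the $r+1$ distinct points $p_1,\dots,p_r,x$, so Proposition \ref{Condition for a smooth rational surface to be anti-canonical} (applied with $r+1\le e+5$) gives $|-K_{\widetilde{\mathbb{F}_{e,r}}}|\ne\emptyset$. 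Hence Proposition \ref{compute Seshadri constant} applies and yields $\varepsilon(\mathbb{F}_{e,r},L;x)=\inf\{L\cdot C/\mathrm{mult}_xC\mid \widetilde{C}\in\mathcal{S}'\}$. As $\mathcal{S}\subseteq\mathcal{S}'$, this already gives $\varepsilon(\mathbb{F}_{e,r},L;x)\le \varepsilon_2:=\inf\{L\cdot C/\mathrm{mult}_xC\mid \widetilde{C}\in\mathcal{S}\}$, so the entire content is the reverse inequality.

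It is cleanest to argue through the nef reformulation \eqref{new expression for epsilon}. Writing $N_s:=\pi_x^*L-sE_x$, the number $\varepsilon_2$ is the supremum of those $s$ for which $N_s\cdot\widetilde{C}\ge0$ for all $\widetilde{C}\in\mathcal{S}$, whereas $\varepsilon(\mathbb{F}_{e,r},L;x)$ is the supremum over the $s$ that in addition satisfy $N_s\cdot(-K_{\widetilde{\mathbb{F}_{e,r}}})\ge0$. Using $-K_{\widetilde{\mathbb{F}_{e,r}}}=\pi_x^*(-K_{\mathbb{F}_{e,r}})-E_x$ together with $\pi_x^*L\cdot E_x=0$ and $E_x^2=-1$, one computes $N_s\cdot(-K_{\widetilde{\mathbb{F}_{e,r}}})=L\cdot(-K_{\mathbb{F}_{e,r}})-s$. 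Therefore $\varepsilon(\mathbb{F}_{e,r},L;x)=\min\!\big(\varepsilon_2,\,L\cdot(-K_{\mathbb{F}_{e,r}})\big)$, and the proposition reduces to the single numerical inequality $\varepsilon_2\le L\cdot(-K_{\mathbb{F}_{e,r}})$, i.e. to the assertion that at the threshold $s=\varepsilon_2$ the anti-canonical class is already met non-negatively.

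To establish $\varepsilon_2\le L\cdot(-K_{\mathbb{F}_{e,r}})$ I would use the fixed--mobile decomposition $-K_{\widetilde{\mathbb{F}_{e,r}}}=F+M$ of the anti-canonical system. Every irreducible component of the fixed part $F$ is a fixed component of $|-K_{\widetilde{\mathbb{F}_{e,r}}}|$ and hence lies in $\mathcal{S}$ by the definition of $\mathcal{S}$, so $N_{\varepsilon_2}\cdot F\ge0$ by the defining property of $\varepsilon_2$. The mobile part $M$ has no fixed components, whence $M$ is nef, and it remains to prove $N_{\varepsilon_2}\cdot M\ge0$. Equivalently, choosing an effective anti-canonical representative through $x$ and writing it as a sum of irreducible curves $\Gamma$, one wants $L\cdot\Gamma\ge\varepsilon_2\,\mathrm{mult}_x\Gamma$ for every $\Gamma$; since the coefficient of $E_x$ in $-K_{\widetilde{\mathbb{F}_{e,r}}}$ is $1$, summing these contributions gives exactly $L\cdot(-K_{\mathbb{F}_{e,r}})\ge\varepsilon_2$.

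The main obstacle is the mobile part $M$. A general member of $|M|$ is an irreducible curve of non-negative self-intersection and so does not belong to $\mathcal{S}$; consequently $N_{\varepsilon_2}\cdot M\ge0$ is not formal --- it is equivalent to the pseudo-effectivity of $N_{\varepsilon_2}$, which can fail for a divisor that is merely non-negative on the negative curves. This is where the hypotheses must be used. Nefness of $L$ furnishes the inequalities $0\le m_i\le a$ (from $E_i$ and the fibres $F_e-E_i$) and, for points off $C_e$, $b\ge ae$ (from $H_e$), while $\varepsilon_2$ is controlled from above, being at most the fibre ratio $a$. Combined with $r\le e+4$, these should force $\varepsilon_2$ below $L\cdot(-K_{\mathbb{F}_{e,r}})=a(2-e)+2b-\sum_i m_i$. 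When $|-K_{\widetilde{\mathbb{F}_{e,r}}}|$ has no mobile part the inequality is immediate, so the genuine work is confined to the range of $(e,r)$ in which $-K_{\widetilde{\mathbb{F}_{e,r}}}$ still moves, where I expect a short comparison of $L\cdot(-K_{\mathbb{F}_{e,r}})$ with the Seshadri ratios of the fibre through $x$ and of the strict transform of $C_e$.
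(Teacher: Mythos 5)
Your setup and first reduction are correct and match the paper's: anti-canonicity of $\widetilde{\mathbb{F}_{e,r}}$ follows from Proposition \ref{Condition for a smooth rational surface to be anti-canonical} applied to the $r+1\le e+5$ points $p_1,\dots,p_r,x$, Proposition \ref{compute Seshadri constant} then gives the infimum over $\mathcal{S}'$, and the whole issue is indeed the members of $|-K_{\widetilde{\mathbb{F}_{e,r}}}|$ in $\mathcal{S}'\setminus\mathcal{S}$. Your reformulation $\varepsilon(\mathbb{F}_{e,r},L;x)=\min\bigl(\varepsilon_2,\,L\cdot(-K_{\mathbb{F}_{e,r}})\bigr)$ is also correct (all members of $|-K_{\widetilde{\mathbb{F}_{e,r}}}|$ are numerically equal, so they impose the single condition $s\le L\cdot(-K_{\mathbb{F}_{e,r}})$), and it isolates cleanly what must be proved. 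But the proof stops exactly where the real work begins: the inequality $\varepsilon_2\le L\cdot(-K_{\mathbb{F}_{e,r}})$ is never established. The fixed-plus-mobile decomposition is a dead end, as you yourself concede: $N_{\varepsilon_2}\cdot M\ge 0$ would follow from pseudo-effectivity of $N_{\varepsilon_2}$, which is not available --- it is essentially the statement being proved. The final paragraph (``these should force\dots'', ``I expect a short comparison\dots'') is a plan, not an argument, so the proposal has a genuine gap.

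The paper closes precisely this step by a direct computation instead of a decomposition: if $\widetilde{C}\in|-K_{\widetilde{\mathbb{F}_{e,r}}}|$, then $C\sim 2H_e+(e+2)F_e-\sum_i E_i$, hence ${\rm mult}_xC\le 2$ and
\[
\frac{L\cdot C}{{\rm mult}_xC}\;\ge\;\frac{L\cdot(-K_{\mathbb{F}_{e,r}})}{2}\;=\;\frac{-ae+2(a+b)-\sum_i m_i}{2}\;>\;a+\frac{ae-\sum_i m_i}{2}\;\ge\;a\;\ge\;\varepsilon(\mathbb{F}_{e,r},L;x),
\]
where the strict inequality uses $b>ae$ and the next one uses $m_i<a$ together with $\sum_i m_i<ar\le ae$; since the fibre through $x$ bounds the Seshadri constant by $a$, the anti-canonical members never compute the infimum. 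Note that this is exactly where a restriction on the number of points enters: the paper invokes $r\le e-1$ and ampleness of $L$ at this step, so even the paper's written argument only covers part of the stated range ($r\le e+4$, $L$ nef). Your reduction can be closed in the same elementary way at least for $r\le e+1$: since the strict transform of the fibre through $x$ lies in $\mathcal{S}$, we have $\varepsilon_2\le a$, so it suffices to check $a\le L\cdot(-K_{\mathbb{F}_{e,r}})$, i.e.\ $\sum_i m_i\le a+2b-ae$, which follows from nefness via $\sum_{p_j\in f}m_j\le a$ for every fibre $f$ (so $\sum_i m_i\le ar\le a(e+1)$) and $b\ge ae$. Without some numerical input of this kind --- which is the actual content of the proposition --- your argument cannot conclude.
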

	\begin{proof}
		Since $r \leq e + 4$, Proposition \ref{Condition for a smooth rational surface to be anti-canonical} implies that $\widetilde{\mathbb{F}_{e,r}}$ is anti-canonical.  Let $f_{\pi(x)}$ denote the fiber in $\Fe$ containing $\pi(x)$, where  $\pi: \Fer \to \Fe$ is the natural blow-up morphism.  Let $f_x$ denote the strict transform of $f_{\pi(x)}$
		in $\mathbb{F}_{e,r}$. Then  $L \cdot f_x \le  a$. So $\varepsilon(\mathbb{F}_{e,r}, L; x) \leq a$.
		
		Note that by Proposition \ref{compute Seshadri constant}, 	
		$\varepsilon(\mathbb{F}_{e,r}, L; x)$ is the infimum of Seshadri ratios with respect to curves whose strict transforms are in $\mathcal{S}'$.  
		If $C \subset \mathbb{F}_{e,r}$ is a curve whose strict transform $\widetilde{C}$ is in  $\mathcal{S}'$ but not in  $\mathcal{S}$, then $\widetilde{C}$ is in the linear system   $|-K_{\widetilde{\mathbb{F}_{e,r}}}|$. 	
		We will now show that for any such curve, the Seshadri ratio is greater than $a$. This proves the proposition. 
		
		Suppose that $C$ is a curve such that $\widetilde{C}$ is in the linear system $|-K_{\widetilde{\mathbb{F}_{e,r}}}|$. Then $$C \sim 2H_e + (e+2)F_e - \sum\limits_{i=1}^r E_i.$$ In particular, $\text{mult}_x C \leq 2$. Then 
		\begin{eqnarray}\label{computing L.-K}
			\frac{L\cdot C}{\text{mult}_x C} & \geq& \frac{L\cdot C}{2} \nonumber \\
			& = & \frac{-2ae + a(e+2) + 2b - \sum\limits_{i=1}^r m_i}{2} \nonumber \\
			& = & \frac{-ae + 2(a+b) - \sum\limits_{i=1}^r m_i}{2} \nonumber \\
			& > & \frac{-ae + 2a + 2ae - \sum\limits_{i=1}^r m_i}{2}~~~~~ (\text{here}~ b > ae~\text{as } L\text{ is ample}) \nonumber \\
			& = & a + \frac{ae - \sum\limits_{i=1}^r m_i}{2}.
		\end{eqnarray}
		
		We claim that $a + \frac{ae - \sum\limits_{i=1}^r m_i}{2} > a$. To see this, observe that since $L$ is ample, we have $L\cdot \widetilde{f_i} > 0$, where  $\widetilde{f_i}$ is the strict transform on $\Fer$ of the fiber $f_i$ in $\Fe$ containing $p_i$. 
		So $a-m_i >0$, and so, $a > m_i$ for each $i$. Now, as $r \leq e-1<e$, we have $\sum\limits_{i=1}^r m_i < ar < ae$. This means $ae - \sum\limits_{i=1}^r m_i > 0$. This proves the claim. Using this in the inequality $\eqref{computing L.-K}$, we see that $\frac{L\cdot C}{\text{mult}_x C} > a$. 
	\end{proof}
	Proposition \ref{our way to compute Seshadri constant} says that to compute the Seshadri constant of $L$ at $x \in \mathbb{F}_{e,r}$, it is important to know the elements of $\mathcal{S}$. We prove some results in this direction. 
	Here, by abuse of notation, we write $H_e, F_e~\text{and}~E_i$ for the pull backs of the corresponding classes in $\widetilde{\mathbb{F}_{e,r}}$. Similarly, if $C$ is a curve in $\Fe$, we use $\widetilde{C}$ to denote its strict transform in $\Fer$ as well as in $\widetilde{\Fer}$.
	\begin{lem}\label{a = 0}
		Let $e >  0$ and $r \le e-1$.  Let $p_1,\dots, p_r \in \mathbb{F}_e$ and $x \in \mathbb{F}_{e,r}$. Let $C$ be a reduced and irreducible curve in $\Fer$ such that its strict transform $\widetilde{C} = a H_e + b F_e - \sum\limits_{i=1}^r m_i E_i - m_x E_x$ is a $(-1)$ or $(-2)$-curve in $\widetilde{\mathbb{F}_{e,r}}$. 
		Then one of the following is true.
		\begin{itemize}
			\item[(i)] $a = 0$ and $b=1$,\\
			\item[(ii)] $a =1$ and $b=0$,  \\
			\item[(iii)] $a=0$ and $b=0$.
		\end{itemize}
		
	\end{lem}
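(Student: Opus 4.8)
The plan is to analyze the image $\pi(C)$ of $C$ in $\Fe$ and read off the $H_e$- and $F_e$-coefficients of $C$ from the classification of irreducible curves on a Hirzebruch surface. Since the points $p_1,\dots,p_r$ are \emph{arbitrary} here (not very general), the family techniques behind Theorem \ref{EinLars} and Lemma \ref{Xumulti} are unavailable, so everything should be done by elementary intersection theory. First I would dispose of the case where $C$ is contracted by $\pi\colon\Fer\to\Fe$: the only irreducible such curves are the exceptional divisors, so $C=E_i$ for some $i$, giving $a=b=0$, which is case (iii). Otherwise $\pi|_C$ is birational onto an irreducible reduced curve $\pi(C)$ of class $aC_e+bf$; since this class is effective and the effective cone of $\Fe$ is spanned by $C_e$ and $f$, I get $a\ge 0$, $b\ge 0$ (concretely $a=\pi(C)\cdot f$ and $b=\pi(C)\cdot(C_e+ef)$, both intersections with nef classes), and each $m_i=\text{mult}_{p_i}\pi(C)\ge 0$.

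The key estimates are the multiplicity bounds $m_i\le a$ and $m_x\le a$. For $m_i$: intersecting $\pi(C)$ with the fiber $f_{p_i}$ through $p_i$ gives $\pi(C)\cdot f_{p_i}=(aC_e+bf)\cdot f=a$, and since $\pi(C)$ is irreducible and, when $a\ge 1$, not a fiber, $m_i=\text{mult}_{p_i}\pi(C)\le \pi(C)\cdot f_{p_i}=a$. For $m_x$ I would split according to the position of $x$: if $\pi(x)$ is an honest point of $\Fe$, then $x$ lies on the strict transform $f_x$ of the fiber $f_{\pi(x)}$, and $\text{mult}_x C\le C\cdot f_x=a-\sum_{p_j\in f_{\pi(x)}}m_j\le a$; if instead $x\in E_i$ for some $i$, then I intersect with $E_i$ itself, using $C\cdot E_i=m_i$ and $\text{mult}_x C\le C\cdot E_i=m_i\le a$. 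In every case $m_x\le a$.

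With these in hand I treat $a\ge 1$. If $\pi(C)\neq C_e$, then $\pi(C)\cdot C_e\ge 0$ forces $-ae+b\ge 0$, i.e.\ $b\ge ae$. Then, since $\widetilde{C}=\pi_x^{*}(C)-m_xE_x$,
\[
\widetilde{C}^{\,2}=C^2-m_x^2=-a^2e+2ab-\sum_{i=1}^r m_i^2-m_x^2\ge -a^2e+2a^2e-(e-1)a^2-a^2=0,
\]
where I used $b\ge ae$, the bounds $m_i,m_x\le a$, and crucially $r\le e-1$ (so $\sum m_i^2\le (e-1)a^2$). This contradicts $\widetilde{C}^{\,2}\in\{-1,-2\}$. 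Hence $\pi(C)=C_e$, which forces $a=1$ and $b=0$: this is case (ii). In particular $a\ge 2$ is impossible, since then $\pi(C)\ne C_e$ automatically. Finally, if $a=0$ and $C$ is not contracted, then $\pi(C)$ is an irreducible member of $|bf|$; as $\pi(C)\cdot f=0$ forces $\pi(C)$ to map to a point of $\mathbb{P}^1$, it must be a single fiber, so $b=1$, which is case (i).

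The step I expect to be the main obstacle is making the multiplicity bound for $m_x$ airtight in every position of $x$ — in particular when $x$ lies on an exceptional divisor $E_i$, where there is no ``fiber through $x$'' in $\Fer$ and one must instead intersect with $E_i$. The role of the hypothesis $r\le e-1$ is precisely to make the estimate $\sum m_i^2+m_x^2\le e\,a^2$ strong enough to push $\widetilde{C}^{\,2}$ up to $0$; this is where the restriction on $r$ enters, and it is the crux of the argument.
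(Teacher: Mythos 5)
Your proof is correct, and it takes a genuinely different route from the paper's. The paper uses \emph{both} numerical conditions defining a $(-1)$- or $(-2)$-curve --- the self-intersection \emph{and} the canonical degree --- and merges them into a single equation (e.g.\ $-ae+2a+2b-a^2e+2ab+2=\sum m_i^2+m_x^2+\sum m_i+m_x$ in the $(-2)$ case), from which a contradiction is extracted when $a\neq 0$ and $b\neq 0$ using Hartshorne's irreducibility criterion $b\ge ae$, the bounds $m_i,m_x\le a$, and $r+1\le e$; the $(-1)$ case is then repeated with the analogous equation. You never touch the condition $K_{\widetilde{\mathbb{F}_{e,r}}}\cdot\widetilde{C}\in\{0,-1\}$: you use only $\widetilde{C}^2<0$, together with $b\ge ae$ (which you obtain directly from $\pi(C)\cdot C_e\ge 0$ for $\pi(C)\neq C_e$ rather than by citation), $m_i,m_x\le a$, and $r\le e-1$, to force $\widetilde{C}^2\ge 0$. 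This is cleaner --- one case analysis instead of two --- and it proves something strictly stronger: \emph{every} reduced irreducible curve on $\Fer$ whose strict transform in $\widetilde{\mathbb{F}_{e,r}}$ has negative self-intersection falls into the three listed types, a statement in the spirit of Conjecture \ref{conj3}, here for arbitrary (not very general) points when $r\le e-1$. A further merit is that you actually justify the multiplicity bounds $m_i\le a$ and $m_x\le a$ --- by intersecting with the fiber through $p_i$, with the strict transform $f_x$, or with $E_i$ when $x\in E_i$ --- whereas the paper asserts these without comment. What the paper's heavier bookkeeping buys is reusability: the same equation-manipulation template is exactly what carries over to Lemma \ref{-1 and -2 curves for r=e and e+1} for $r=e,e+1$, where your slack vanishes (with $r+1\ge e+1$ points one only gets $\widetilde{C}^2\ge a^2e-(r+1)a^2\ge -2a^2$, no contradiction) and the canonical-degree condition plus the very-generality inputs (Lemma \ref{Xumulti}, Lemma \ref{b at least sum of M_i's}) become essential --- a limitation you correctly flag yourself. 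One small notational caveat: when $C=E_i$, its class written in the lemma's form has $m_i=-1$, which is harmless since the statement imposes no sign on the $m_i$, but it is worth saying explicitly when you declare that case to be (iii).
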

	\begin{proof} {First, observe that from \cite[Corollary 2.18, Chapter V]{Hartshorne1977}, for any  reduced irreducible curve $aC_e+bf$ in $\Fe$,  we have $a\ge0,b\ge0$. Now since $\widetilde{C}$ is either the strict transform of some reduced irreducible curve in $\Fe$ or $\widetilde{C}$ is an exceptional divisor, we conclude that $a\ge0,b\ge0$ for $\widetilde{C}$ as well. }
		
		Suppose that $\widetilde{C}$ is a $(-2)$-curve. Then since $\widetilde{C}^2 = -2$ and $K_{\widetilde{\mathbb{F}_{e,r}}}\cdot \widetilde{C} = 0$, we have
		\begin{eqnarray*}
			a^2 e - 2ab +\sum\limits_{i=1}^r m_i^2 +m_x^2 -2 & = & 0~~~ \text{and} \\
			-2ae + a(e+2) + 2b - \sum\limits_{i=1}^r m_i -m_x & = & 0.
		\end{eqnarray*}
		
		Hence 
		$$	-2ae + a(e+2) + 2b - \sum\limits_{i=1}^r m_i -m_x  =  a^2 e - 2ab + 
		\sum\limits_{i=1}^r m_i^2 + m_x^2 - 2,$$ which in turn gives 
		\begin{eqnarray} \label{for -2 curve}
			-ae + 2a + 2b - a^2 e + 2ab + 2  =  \sum\limits_{i=1}^r m_i^2 + m_x^2 + \sum\limits_{i=1}^r m_i + m_x.
		\end{eqnarray}
		
		Clearly, we have one of the four possibilities, namely, $a \neq 0$ and $b \neq 0$, or $a \neq 0$ and $b =0$, or  $a =0$ and $b \neq 0$, or  $a=0$ and $b=0$. \\
		
		We claim that the first of these possibilities does not occur. For, if $b \neq 0$, then since $C$ will be a strict transform of the curve $\mathcal{C} = aC_e + b f$ in $\mathbb{F}_e$ and $\mathcal{C}$ is irreducible, we have $b \geq ae$ (\cite[Corollary 2.18(b), Chapter V]{Hartshorne1977}). Using this on the left hand side of \eqref{for -2 curve}, we get
		\begin{eqnarray*}
			-ae + 2a + 2b - a^2 e + 2ab + 2 & \geq & ae + 2a + a^2 e + 2. 
		\end{eqnarray*}
		Now suppose that $a \neq 0$. Then since each $m_i \leq a$ and $m_x \leq a$, we have 
		\begin{equation}\label{-2 curve}
			\begin{split}
				ae + 2a + a^2 e + 2  & >  a^2 e + ae  \\
				& \geq  a^2 (r+1) + a (r+1)  \\
				& \geq  \sum\limits_{i=1}^r m_i^2 + m_x^2 + \sum\limits_{i=1}^r m_i + m_x.
			\end{split}
		\end{equation}
		This is a contradiction to  (3.3). So $a$ must be $0$ if $b \neq 0$. As $a = 0$, we must have $b=1$. 
		
		Now suppose that $b = 0$. Again by \cite[Corollary 2.18(b), Chapter V]{Hartshorne1977}, $a=0~\text{or}~ 1$. This completes the proof for the case of $(-2)$-curves. 
		
		{Now suppose that $\widetilde{C}$ is a $(-1)$-curve. Then since $\widetilde{C}^2 = -1$ and $K_{\widetilde{\mathbb{F}_{e,r}}}\cdot \widetilde{C} = -1$, we have
			\begin{eqnarray*}
				a^2 e - 2ab +\sum\limits_{i=1}^r m_i^2 +m_x^2 -1 & = & 0~~~ \text{and} \\
				-2ae + a(e+2) + 2b - \sum\limits_{i=1}^r m_i -m_x - 1& = & 0.
			\end{eqnarray*}
			Hence 
			$$	-2ae + a(e+2) + 2b - \sum\limits_{i=1}^r m_i -m_x -1 =  a^2 e - 2ab + 
			\sum\limits_{i=1}^r m_i^2 + m_x^2 - 1,$$ which in turn gives 
			\begin{eqnarray} \label{for -1 curve}
				-ae + 2a + 2b - a^2 e + 2ab   =  \sum\limits_{i=1}^r m_i^2 + m_x^2 + \sum\limits_{i=1}^r m_i + m_x.
			\end{eqnarray}
			Note that $ae+2a+a^2e>a^2e+ae$ when $a\neq 0$ and $b\neq 0$, so with similar steps as in \eqref{-2 curve} we can get a contradiction to \eqref{for -1 curve}. So we conclude that only the cases listed in the statement of the lemma can occur  in the $(-1)$-curve case as well. 
		}
	\end{proof} 
	We now proceed to list  all the $(-1)$ and $(-2)$-curves passing through a very general point $x \in \mathbb{F}_{e,r}$, whenever $r = e$ or $e+1$ and $p_1,\dots, p_r$ in $\mathbb{F}_e$  are also very general. First, the following lemma.
	\begin{lem}\label{b at least sum of M_i's}
		Let $e \ge 0$ and $r \leq e+1$. Let $p_1,\dots, p_r$ be distinct  points in $\mathbb{F}_e$. Suppose that 
		{$\mathcal{C} \in | aC_e + bf|$} is a reduced and irreducible curve in $\mathbb{F}_e$ having multiplicity $m_i $ at $p_i$ for each $i$.
		If $a>1$ or $b > e$, then $b \geq \sum\limits_{i=1}^r m_i$.
	\end{lem}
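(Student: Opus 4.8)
The plan is to bound $b=\sum m_i$ by pairing $\mathcal{C}$ against a well-chosen effective divisor through all the points $p_i$. The organizing observation is a one-line intersection computation: for the nef section class $C_e+ef$ one has
\[
\mathcal{C}\cdot(C_e+ef)=(aC_e+bf)\cdot(C_e+ef)=-ae+ae+b=b .
\]
So $b$ is exactly the intersection number of $\mathcal{C}$ with any curve in the class $C_e+ef$, and this explains why $b$ (and not $b-ae=\mathcal{C}\cdot C_e$) is the quantity that should dominate $\sum m_i$. The whole proof reduces to realizing this intersection number geometrically along the points.

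First I would produce a divisor $D\in|C_e+ef|$ passing through $p_1,\dots,p_r$. The standard dimension count gives $h^0(\mathbb{F}_e,\,C_e+ef)=e+2$, so $\dim|C_e+ef|=e+1\ge r$. Since the $p_i$ are distinct, each imposes at most one linear condition on sections, whence the sublinear system of members of $|C_e+ef|$ vanishing at all $r$ points has dimension at least $(e+1)-r\ge 0$ and is in particular nonempty; I fix any such $D$, so $\mathrm{mult}_{p_i}(D)\ge 1$ for every $i$.

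The crucial step, and the only place the hypothesis is used, is to check that $\mathcal{C}$ is not a component of $D$. If it were, then $D-\mathcal{C}$ would be effective, so its class $(C_e+ef)-(aC_e+bf)=(1-a)C_e+(e-b)f$ would have both coefficients nonnegative, because every irreducible curve on $\mathbb{F}_e$ has nonnegative $C_e$- and $f$-coefficients (Hartshorne V.2.18) and hence so does every effective class. But $a>1$ or $b>e$ forces $1-a<0$ or $e-b<0$, a contradiction; thus $\mathcal{C}$ is not a component of $D$, and since $\mathcal{C}$ is irreducible, $\mathcal{C}$ and $D$ share no common component. Consequently $\mathcal{C}\cap D$ is finite and, as both curves pass through each $p_i$, the elementary local estimate $(\mathcal{C}\cdot D)_{p_i}\ge \mathrm{mult}_{p_i}(\mathcal{C})\cdot\mathrm{mult}_{p_i}(D)\ge m_i$ gives
\[
b=\mathcal{C}\cdot D=\sum_{p\in\mathcal{C}\cap D}(\mathcal{C}\cdot D)_p\ \ge\ \sum_{i=1}^r (\mathcal{C}\cdot D)_{p_i}\ \ge\ \sum_{i=1}^r m_i .
\]

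I expect the main obstacle to be precisely the non-component verification: one must argue that under the stated hypothesis $\mathcal{C}$ cannot be forced into every member of $|C_e+ef|$ through the points, and this is exactly what the nonnegativity of coefficients of effective classes delivers. The remaining ingredients, namely the bound $\dim|C_e+ef|=e+1\ge r$ guaranteeing that $D$ exists and the standard local intersection inequality, are routine; note that the hypothesis $r\le e+1$ is used only to ensure the linear system is large enough, and that the cases it (correctly) excludes, such as $\mathcal{C}=C_e$ or a single fiber, are precisely those for which $b\ge\sum m_i$ can genuinely fail.
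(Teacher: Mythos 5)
Your proof is correct and follows essentially the same route as the paper's: both arguments produce a divisor $D\in|C_e+ef|$ through the $r$ points (using $h^0(C_e+ef)=e+2$ and $r\le e+1$), rule out $\mathcal{C}$ being a component of $D$ by noting that $(1-a)C_e+(e-b)f$ cannot be effective under the hypothesis, and conclude via $b=\mathcal{C}\cdot D\ge\sum m_i$. The only cosmetic difference is that the paper bounds $h^0$ via Riemann--Roch while you quote the exact value directly.
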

	\begin{proof}
		Let us consider the divisor $\mathcal{C}_1 = C_e + ef$. By {Riemann}-Roch theorem, we have $$h^0(\mathcal{C}_1) > \frac{\mathcal{C}_1^2-K_{\mathbb{F}_e}\cdot \mathcal{C}_1}{2}.$$ 
		
		As $\mathcal{C}_1^2 = e$ and $-K_{\mathbb{F}_e}\cdot \mathcal{C}_1 = (2C_e + (e+2)f)\cdot (C_e+ef) = e+2$, we have $h^0(\mathcal{C}_1) > e+1$. Now since $r \leq e+1$, there exists a curve $D \in |\mathcal{C}_1|$ passing through points $p_1,p_2,\dots,p_r$. Let $n_i = \text{mult}_{p_i}D$, for $1 \leq i \leq r$. Then $n_i \geq 1$ for each $i$.  {Note that $D-\mathcal{C}\in |(1-a)C_e+(e-b)f|$. By the assumptions on $a$ and $b$ and \cite[Corollary 2.18, Chapter V]{Hartshorne1977}, $D-\mathcal{C}$ cannot be effective. Hence} $\mathcal{C}$ is not a component of $D$. Therefore, we have
		\begin{eqnarray} \label{b bigger than or equal to sum of m_i's}
			b & = & (aC_e+bf)\cdot (C_e+ef) \nonumber \\ & = & \mathcal{C}\cdot \mathcal{C}_1 \nonumber \\ & = & \mathcal{C}\cdot D  \nonumber \\ 
			& \geq & \sum\limits_{i=1}^r m_in_i \nonumber \\
			& \geq & \sum\limits_{i=1}^r m_i. \nonumber
		\end{eqnarray}
		This proves the lemma.
	\end{proof}
	\begin{lem}\label{-1 and -2 curves for r=e and e+1}
		Let $e \ge 0$ and $r = e$ or $r=e+1$. Let $p_1,\dots,p_r$ be very general points in $\mathbb{F}_e$ and let $x$ be a very general point in $\mathbb{F}_{e,r}$. Let $C$ be a reduced and irreducible curve in $\Fer$ such that its strict transform $\widetilde{C} = a H_e + b F_e - \sum\limits_{i=1}^r m_i E_i - m_x E_x$ is a $(-1)$ or $(-2)$-curve in $\widetilde{\mathbb{F}_{e,r}}$. 
		Then one of the following is true.
		
		\begin{itemize}
			\item[(i)] $a = 0$ and $b=1$, \\
			\item[(ii)] $a =1$ and $b=0$,  \\
			\item[(iii)] $a=0$ and $b=0$,\\
			\item[(iv)] $a=1$ and $b=e$.
		\end{itemize}
	\end{lem}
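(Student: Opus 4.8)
The plan is to follow the structure of the proof of Lemma~\ref{a = 0}, the only new features being the case $a\ge 2$ (excluded there by the crude bound $\sum m_i<ar<ae$, which is no longer available once $r=e$ or $e+1$) and the new boundary possibility $a=1,b=e$. As in Lemma~\ref{a = 0} I would first note that $a\ge 0$ and $b\ge 0$: either $\widetilde C$ is an exceptional divisor, or it is the strict transform of a reduced irreducible curve $\mathcal C=aC_e+bf$ on $\mathbb F_e$, and for the latter \cite[Corollary~2.18, Chapter~V]{Hartshorne1977} forces $a,b\ge 0$. The two extreme cases are then immediate: if $a=0$, then $\mathcal C$ is a fiber ($b=1$, giving (i)) or $\widetilde C$ is exceptional ($a=b=0$, giving (iii)); if $b=0$, then $\mathcal C=aC_e$ is reduced and irreducible only for $a=1$, giving (ii). This leaves the \emph{main case} $a\ge 1$, $b\ge 1$, where $\mathcal C$ is irreducible and distinct from $C_e$ and $f$, so that \cite[Corollary~2.18(b), Chapter~V]{Hartshorne1977} yields the key inequality $b\ge ae$.

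Two elementary inputs drive the rest. First, a very general $x\in\mathbb F_{e,r}$ lies off the exceptional divisors, so $\pi(x)$ is an $(r+1)$-st very general point of $\mathbb F_e$ distinct from $p_1,\dots,p_r$. Second, intersecting $\mathcal C$ with the fibers through $p_i$ and through $\pi(x)$ gives $m_i\le \mathcal C\cdot f=a$ and $m_x\le a$ (legitimate because $\mathcal C$, having $a\ge 1$, is not a fiber). I would then write down the two defining relations of $\widetilde C$ exactly as in Lemma~\ref{a = 0}; in particular $K_{\widetilde{\mathbb F_{e,r}}}\cdot\widetilde C\in\{-1,0\}$ gives the linear relation
\[
\sum_{i=1}^r m_i + m_x = 2a+2b-ae-\delta,\qquad
\delta=\begin{cases}1,&\widetilde C\text{ a }(-1)\text{-curve},\\ 0,&\widetilde C\text{ a }(-2)\text{-curve}.\end{cases}
\]

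The main obstacle is ruling out $a\ge 2$, and this is precisely where Lemma~\ref{b at least sum of M_i's} enters. When $a\ge 2$ its hypothesis $a>1$ holds, so $\sum_{i=1}^r m_i\le b$. Feeding this together with $m_x\le a$ into the displayed linear relation gives $2a+2b-ae-\delta\le b+a$, that is $b\le ae-a+\delta\le ae-1$, contradicting $b\ge ae$. Hence $a\le 1$.

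It remains to treat $a=1$, where the case $b=e$ of (iv) actually occurs. Here the fiber bound already forces every $m_i$ and $m_x$ to lie in $\{0,1\}$, so $\widetilde C^2=\mathcal C^2-N$ with $\mathcal C^2=2b-e$ and $N:=\sum_i m_i+m_x\le r+1\le e+2$ the number of unit multiplicities. Since $\widetilde C^2\le -1$, we get $N\ge 2b-e+1$, whence $2b-e+1\le e+2$ and so $b\le e$; combined with the irreducibility bound $b\ge e$ this forces $b=e$, which is (iv). The degenerate case $e=0$ (where (iv) collapses into (ii)) is covered by the same estimates. I expect the only points requiring care to be the justification of $m_i,m_x\le a$ and the correct bookkeeping of $\delta$ across the $(-1)$- and $(-2)$-cases; the remaining arithmetic is routine.
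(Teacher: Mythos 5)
Your proof is correct, and it takes a genuinely different (and leaner) route than the paper's. The paper argues as in its Lemma \ref{a = 0}: it merges the two numerical conditions on $\widetilde{C}$ into a single equation and then derives contradictions from the multi-point Ein--Lazarsfeld bound of Lemma \ref{Xumulti} (this is exactly where the very generality of $p_1,\dots,p_r$ and $x$ enters essentially) together with Lemma \ref{b at least sum of M_i's}; moreover the $(-1)$-curve case there requires a further three-way split ($b-ae>0$; $b-ae=0$ with $r=e+1$; $b-ae=0$ with $r=e$). You instead split on $a$: for $a\ge 2$ you use only the \emph{linear} relation coming from $K_{\widetilde{\mathbb{F}_{e,r}}}\cdot\widetilde{C}\in\{-1,0\}$, Lemma \ref{b at least sum of M_i's}, the fiber bound $m_x\le a$, and Hartshorne's irreducibility bound $b\ge ae$, getting $a+b\le ae+\delta$ and hence $b\le ae-1$, a contradiction; for $a=1$ you use the \emph{quadratic} relation $\widetilde{C}^2=2b-e-N\le -1$ with $N=\sum m_i+m_x\le r+1\le e+2$ (all multiplicities being $0$ or $1$ by the fiber bound) to force $b\le e$, hence $b=e$. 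I verified the sign conventions: $K_{\widetilde{\mathbb{F}_{e,r}}}\cdot\widetilde{C}=ae-2a-2b+\sum m_i+m_x$, so your $\delta$-bookkeeping is right, and the degenerate case $e=0$ indeed collapses correctly. What your approach buys: it is shorter, treats the $(-1)$ and $(-2)$ cases uniformly, and completely avoids the deformation-theoretic input (Lemma \ref{Xumulti}); consequently it only uses that the $p_i$ are distinct and that $x$ lies off the exceptional divisors with $\pi(x)\notin\{p_1,\dots,p_r\}$, so it actually proves a slightly stronger statement than the lemma (valid without very generality). What the paper's approach buys is uniformity: its proof is a direct continuation of the method of Lemma \ref{a = 0}, reusing the same combined equation and the same two auxiliary lemmas.
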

	\begin{proof}
		Suppose that $\widetilde{C}$ is a $(-2)$-curve in $\widetilde{\mathbb{F}_{e,r}}$. We proceed exactly as in Lemma \ref{a = 0} to obtain the equation  
		\begin{equation}\label{for -2 curves}
			-ae + 2a + 2b - a^2 e + 2ab + 2  =  \sum\limits_{i=1}^r m_i^2 + m_x^2 + \sum\limits_{i=1}^r m_i + m_x.
		\end{equation}
		Assume $a>1$ or $b\notin \{0,1,e\}$. Then since $a \geq m_i$ for each $i$ and $a \geq m_x$, we have $2a \geq m_i + m_x$ for each $i$.
		
		Further, let $\mathcal{C} \in |aC_e + bf|$ be the curve in $\Fe$ corresponding to $\widetilde{C}$. {Since  $x \in \mathbb{F}_{e,r}$ and $p_1,\dots,p_r \in \mathbb{F}_e$ are very general points, by Lemma \ref{Xumulti},} we have $$2ab - a^2e = \mathcal{C}^2 \geq \sum\limits_{i=1}^r m_i^2 + m_x^2 -m,$$		
		where $m_x=\text{mult}_{\pi(x)}\mathcal{C}$ and $m = \min\limits_{1\le i \le r+1}  \{m_i \vert m_i \neq 0\}$ (here we set $m_{r+1} = m_x$).  Considering all these arguments, we have 
		\begin{eqnarray*}
			-ae + 2a + 2b - a^2 e + 2ab + 2  &=& 2a + (b-ae) + b + (2ab - a^2e) +2 \nonumber \\
			& \overset{\text{(*)}}\geq & (m+m_x) + (b-ae) + \sum\limits_{i=1}^r m_i + 2 + \bigg(\sum\limits_{i=1}^r m_i^2 + m_x^2 -m \bigg) \nonumber \\ 
			& > & m_x + \sum\limits_{i=1}^r m_i + \sum\limits_{i=1}^r m_i^2 + m_x^2,
		\end{eqnarray*}
		which is a contradiction to  \eqref{for -2 curves}. For the inequality $(*)$, note that 
		by Lemma \ref{b at least sum of M_i's}, we have $b \geq \sum\limits_{i=1}^r m_i$. {Hence we have $a\le1$ and $b\in\{0,1,e\}$. Now if $a=0$, then by \cite[Corollary 2.18, Chapter V]{Hartshorne1977}, $b$ is forced to be $0$ or $1$. And if $a=1$, then again by \cite[Corollary 2.18, Chapter V]{Hartshorne1977}, $b=0$ or $b\ge e$.} This proves the lemma for the case of a $(-2)$-curve.
		
		Suppose that $\widetilde{C}$ is a $(-1)$-curve. Then since $\widetilde{C}^2 = -1$ and $K_{\widetilde{\mathbb{F}_{e,r}}}\cdot \widetilde{C} = -1$, we have
		\begin{eqnarray*}
			-a^2 e + 2ab - \sum\limits_{i=1}^r m_i^2 - m_x^2 + 1 & = & 0 ~~~ \text{and} \\
			2ae - a(e+2) - 2b + \sum\limits_{i=1}^r m_i + m_x  + 1 & = & 0.
		\end{eqnarray*}
		This implies
		\begin{eqnarray}\label{for -1 curve}
			-ae + 2a + 2b -a^2 e + 2ab  & = & \sum\limits_{i=1}^r m_i^2 + m_x^2 + \sum\limits_{i=1}^r m_i + m_x.
		\end{eqnarray}
		We now prove that if $b > e$, then $a$ has to be $0$, by considering the following cases. \\
		
		\textbf{Case 1 :} Suppose that $b > e ~\text{and}~b-ae > 0$. If $a\neq 0$, we have 
		\begin{eqnarray*}
			-ae + 2a + 2b -a^2e + 2ab & =  & 2a + (b-ae) + b + (2ab- a^2e) \nonumber \\
			& > & (m + m_x) +  \sum\limits_{i=1}^r m_i  \nonumber \\ 
			& & + \bigg (\sum\limits_{i=1}^r m_i^2 + m_x^2 - m \bigg )~~(\text{as}~b-ae > 0) \nonumber \\
			& = & m_x + \sum\limits_{i=1}^r m_i + \sum\limits_{i=1}^r m_i^2 + m_x^2,
		\end{eqnarray*}
		which is a contradiction to \eqref{for -1 curve}. So, if $b > e~\text{and}~b-ae > 0$, we are forced to have $a=0$. \\
		
		\textbf{Case 2 :} Suppose that $b > e,~ b-ae = 0~\text{and}~r=e+1$. Then, by Lemma \ref{b at least sum of M_i's}, we have $b \geq \sum\limits_{i=1}^r m_i$. So, if $a\neq 0$, we have $a > m_j$ for some $j$ (for, otherwise, $ae = b \geq a(e+1)$, a contradiction). Therefore, $2a > m+m_j$, and 
		\begin{eqnarray*}
			-ae + 2a + 2b -a^2e + 2ab & =  & 2a + (b-ae) + b + (2ab- a^2e) \nonumber \\
			& > & (m + m_j) +  \bigg(\sum\limits_{i=1}^r m_i + m_x - m_j \bigg) \nonumber \\ 
			& & + \bigg (\sum\limits_{i=1}^r m_i^2 + m_x^2 - m \bigg )~~(\text{as}~b \geq \sum\limits_{i=1}^r m_i + m_x  - m_j) \nonumber \\
			& = & m_x + \sum\limits_{i=1}^r m_i + \sum\limits_{i=1}^r m_i^2 + m_x^2,
		\end{eqnarray*}
		again, a contradiction to  \eqref{for -1 curve}. Note that $b \geq \sum\limits_{i=1}^r m_i + m_x  - m_j$ follows from Lemma \ref{b at least sum of M_i's}. \\
		
		\textbf{Case 3 :} 
		Suppose that $b > e,~ b-ae = 0~\text{and}~r=e$. Let $m_{r+1} = m_x$. Now, since $r+1 = e+1$, applying Lemma \ref{b at least sum of M_i's} to the points $p_1,\dots,p_r,\pi(x)$, we have $b \geq \sum\limits_{i=1}^{r+1} m_i$. Therefore, if $a\neq 0$, there exists a $j \in \{1,\dots, r+1\}$ such that $a > m_j$. Then we have $2a > m + m_j$.
		This implies 
		\begin{eqnarray*}
			-ae + 2a + 2b -a^2e + 2ab & =  & 2a + (b-ae) + b + (2ab- a^2e) \nonumber \\
			& > & (m + m_j) +  \sum\limits_{i=1}^r m_i + m_x - m_j \nonumber \\ 
			& & + \bigg (\sum\limits_{i=1}^r m_i^2 + m_x^2 - m \bigg )~~(\text{as}~b \geq \sum\limits_{i=1}^r m_i + m_x - m_j) \nonumber \\
			& = & m_x + \sum\limits_{i=1}^r m_i + \sum\limits_{i=1}^r m_i^2 + m_x^2,
		\end{eqnarray*}
		again a contradiction to  \eqref{for -1 curve}.\\
		
		From these three cases, we can conclude that when $a \neq 0$, $b \leq e$. However, the irreducibility criterion {(\cite[Corollary 2.18, Chapter V]{Hartshorne1977})} implies that when $a \neq 0$, either $b = 0$ or $b \geq ae$. Combining these arguments, we have that when $a \neq 0$, it should be equal to $1$, in which case $b = 0$ or $b = e$. Finally, also, by the irreducibility criterion, when $a = 0$, $b$ has to be $0$ or $1$. This completes the proof. 
	\end{proof}
	
	\begin{lem}\label{fixed components of anti-canonical}
		Let $e \ge 0$ and $r \le e+1$. Let $p_1,\dots,p_r \in \mathbb{F}_e$  and $x \in \mathbb{F}_{e,r}$. Then the following are true. \\
		\begin{itemize}
			\item[(i)] The set of all fixed components of $|-K_{\widetilde{\mathbb{F}_{e,r}}}|$ is contained in  the 
			set $$\{\widetilde{C}_e, \widetilde{f_1},\dots,\widetilde{f_r},E_1,\dots,E_r,E_x\},$$ where $f_i$ are the fibers in $\mathbb{F}_e$ containing $p_i$, $\widetilde{f_i}$ and $\widetilde{C}_e$ are the strict transforms in $\widetilde{\mathbb{F}_{e,r}}$ of  $f_i$ and $C_e$, respectively.\\
			\item[(ii)] If $e \geq 3$, $\widetilde{C}_e$ is a fixed component of $|-K_{\widetilde{\mathbb{F}_{e,r}}}|$.\\
			\item[(iii)] If $p_{i_1},\dots, p_{i_k}$ lie on the same fiber, where $k > 2$, then $\widetilde{f_{i_1}}$ is a fixed component of $|-K_{\widetilde{\mathbb{F}_{e,r}}}|$. Further, if $e \geq 3$, it is enough to have $k > 1$. 
		\end{itemize}
	\end{lem}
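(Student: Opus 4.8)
The plan is to work on $\widetilde{\mathbb{F}_{e,r}}$ using the fact that it is anti-canonical: since we blow up $r+1\le e+2\le e+5$ points in all, Proposition~\ref{Condition for a smooth rational surface to be anti-canonical} gives $|-K_{\widetilde{\mathbb{F}_{e,r}}}|\neq\emptyset$. Writing $-K=F+M$ for the decomposition into fixed and mobile parts, I will use two standard facts throughout: the mobile part $M$ is nef (a general member of $|M|$ meets every irreducible curve properly), and if an irreducible curve $D$ has $-K\cdot D<0$ then $D$ is a fixed component, because every $\Delta\in|-K|$ satisfies $\Delta\cdot D=-K\cdot D<0$ and so must contain $D$. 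From $-K=2H_e+(e+2)F_e-\sum_i E_i-E_x$ I record $-K\cdot F_e=2$, $-K\cdot E_i=-K\cdot E_x=1$, and $-K\cdot H_e=2-e$.

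Part (ii) is then immediate. The strict transform is $\widetilde{C}_e=H_e-\sum_{p_i\in C_e}E_i-\delta E_x$ with $\delta\in\{0,1\}$, so $-K\cdot\widetilde{C}_e=(2-e)-\#\{\text{blown-up points on }C_e\}\le 2-e\le-1<0$ once $e\ge 3$; hence $\widetilde{C}_e$ is a fixed component. The same computation gives, for a fiber carrying blown-up points, $-K\cdot\widetilde{f_{i_1}}=2-\#\{\text{blown-up points on }f_{i_1}\}$, which is negative as soon as $k\ge 3$ such points lie on it; this proves the first assertion of (iii).

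For the refined statement of (iii) (when $e\ge 3$, $k\ge 2$) and for (i), I would first pin down $|-K_{\mathbb{F}_e}|$ downstairs: for $e\ge 3$ the class $-K_{\mathbb{F}_e}=2C_e+(e+2)f$ has fixed part exactly $C_e$ and mobile part $|C_e+(e+2)f|$, whose general member is irreducible. Thus every $D''\in|-K_{\mathbb{F}_e}|$ has the form $C_e+D'$ with $D'\in|C_e+(e+2)f|$, and $D'\cdot f=1$ for every fiber $f$. Now if $f_{i_1}$ carries two blown-up points not lying on $C_e$, the residual curve $D'$ of any anticanonical member must pass through both, contradicting $D'\cdot f_{i_1}=1$ unless $f_{i_1}\subseteq D'$; hence $\widetilde{f_{i_1}}$ is forced into the fixed part. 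The one subtlety is the borderline configuration in which a point equals $C_e\cap f_{i_1}$: for very general points this never occurs (they avoid the fixed curve $C_e$), and I would either invoke that or dispose of the degenerate case by hand.

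Finally, for (i) let $D$ be an irreducible fixed component. If $D$ is exceptional it is some $E_i$ or $E_x$, so assume $D=\widetilde{\mathcal{D}}$ is the strict transform of an irreducible $\mathcal{D}=aC_e+bf$ with $a,b\ge 0$. Pushing $F\le -K$ forward by $\pi$ gives $\mathcal{D}\le 2C_e+(e+2)f$, so $a\le 2$; intersecting the nef class $M$ with a general fiber gives $F\cdot F_e=\sum_j n_j a_j\le 2$, bounding the total horizontal part of $F$. By the irreducibility criterion \cite[Corollary 2.18, Chapter V]{Hartshorne1977} the only remaining candidates are $\mathcal{D}=f$, $\mathcal{D}=C_e$, $\mathcal{D}=C_e+bf$ with $e\le b\le e+2$, and (only when $e\le 2$) $\mathcal{D}=2C_e+bf$. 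The candidates with $b\ge e$ are excluded since $|C_e+(e+2)f|$ is large enough that a general member through the $\le e+2$ points is irreducible and avoids $\mathcal{D}$, so $\widetilde{\mathcal{D}}$ cannot sit in every anticanonical divisor; this leaves $\mathcal{D}=C_e$ (giving $\widetilde{C}_e$) and $\mathcal{D}=f$. A fiber through none of the $p_i$ — in particular one meeting only $x$ — moves enough that some anticanonical member avoids it, so the surviving fibers are exactly the $\widetilde{f_i}$. I expect the work of this last paragraph — controlling the horizontal curves $C_e+bf$, handling the low-$e$ cases, and separating genuinely fixed fibers from moving ones — to be the main obstacle; the individual inputs are elementary, but the case analysis is where the real effort lies.
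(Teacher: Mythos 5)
Your parts (ii) and (iii) are correct and essentially identical to the paper's: both reduce to the intersection numbers $-K_{\widetilde{\mathbb{F}_{e,r}}}\cdot \widetilde{C}_e \le 2-e$ and $-K_{\widetilde{\mathbb{F}_{e,r}}}\cdot \widetilde{f_{i_1}} = 2-k$, and your downstairs argument for the refined claim in (iii) (every anticanonical member is $C_e+D'$ with $D'\cdot f=1$, so $D'$ cannot pass through two points of a fiber without containing it) is just the pushforward of the paper's one-line computation $(-K_{\widetilde{\mathbb{F}_{e,r}}}-\widetilde{C}_e)\cdot \widetilde{f_{i_1}}\le 1-k<0$. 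The borderline configuration you flag, a point at $C_e\cap f_{i_1}$, is a real issue: there the relevant intersection number is only $2-k$, which is not negative for $k=2$, and the assertion can in fact fail; but the paper's proof tacitly assumes this case away as well, so this does not put your argument behind the paper's.

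Part (i) is where you genuinely diverge from the paper, and where the gaps are. The paper's proof is constructive and sidesteps everything you struggle with: given $q$ off the listed curves, it chooses $e+1-r$ fibers missing $p_1,\dots,p_r,\pi(x)$ and the image of $q$, and then $2\widetilde{C}_e+\sum_i\widetilde{f_i}+\widetilde{f_x}+\sum_j\widetilde{f'_j}$ is an explicit member of $|-K_{\widetilde{\mathbb{F}_{e,r}}}|$ avoiding $q$ (this is precisely where $r\le e+1$ enters: one needs $r+1\le e+2$ fibers). Your classification of candidates via pushforward and the irreducibility criterion is fine, but both exclusion steps fail as written. First, the claim that a general member of $|C_e+(e+2)f|$ through the $\le e+2$ points is irreducible is false for special configurations, which the lemma allows since it has no genericity hypothesis on $p_1,\dots,p_r,x$: if two of the points lie on one fiber, every member through them contains that fiber, because $(C_e+(e+2)f)\cdot f=1$. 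This step is repairable without irreducibility: divisors containing a fixed irreducible $\mathcal{D}\in|C_e+bf|$, $b\ge e$, form the subsystem $\mathcal{D}+|(e+2-b)f|$ of projective dimension $\le 2$, while the members through the points form a system of dimension at least $e+4-r\ge 3$, so some member $D'$ through the points avoids $\mathcal{D}$, and the strict transform of $C_e+D'$ (plus exceptional curves as needed) excludes $\widetilde{\mathcal{D}}$. Second, and more seriously, your elimination of the fiber meeting only $x$ is not an argument: $\widetilde{f_x}=F_e-E_x$ is rigid, it does not move at all, and every member of $|-K_{\widetilde{\mathbb{F}_{e,r}}}|$ pushes forward to a curve through $\pi(x)$ and hence meets $f_x$; to rule out containment one must actually exhibit a member meeting $f_x$ properly. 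Notably, this is the one curve the paper's own construction also fails to eliminate (every member it builds contains $\widetilde{f_x}$, which is then silently dropped from the final list), so this is the point where a genuinely new argument is needed rather than a detail to be waved through.
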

	\begin{proof}
		(i) Assume first that each $p_i$ belongs to a distinct fiber $f_i$ in $\mathbb{F}_e$ and that $\pi(x) \in \mathbb{F}_e$ is not in any of these fibers and also not on $C_e$. Let $f_x \subset \mathbb{F}_{e,r}$ denote the strict transform of the fiber in $\Fe$ containing $\pi(x)$. Suppose that $q \in \widetilde{\mathbb{F}_{e,r}}$ is an arbitrary point such that $q$ is not contained in any of the curves  
		$\widetilde{C}_e, \widetilde{f_1},\dots,\widetilde{f_r}, \widetilde{f_x}, E_1,\dots,E_r,E_x$. We claim that $q$ is not in base locus of $|-K_{\widetilde{\mathbb{F}_{e,r}}}|$. 
		
		To prove this claim, we first choose $e+1-r$ fibers in $\mathbb{F}_e$ which do not contain any element of the set 
		$\{ p_1,\dots, p_r, \pi(x), \text{ the image of }q \text{ in } \mathbb{F}_e\}$. 
		If we denote these fibers as $f'_1,\dots, f'_{e+1-r}$ and their strict transforms as $\widetilde{f'_1}, \dots, \widetilde{f}'_{e+1-r}$, then $$2\widetilde{C}_e + \sum\limits_{i=1}^r \widetilde{f_i} + \widetilde{f_x} + \sum\limits_{i=1}^{e+1-r}\widetilde{f'_i}$$ is a curve in $\widetilde{\mathbb{F}_{e,r}}$ not containing $q$. So, $q$ is not in the base locus of the divisor class $$2H_e + \sum\limits_{i=1}^r (F_e - E_i) + (F_e-E_x) + (e+1-r)F_e,$$ which means $q$ is not in the base locus of the linear system $|-K_{\widetilde{\mathbb{F}_{e,r}}}|$.

		Suppose that $\pi(x) \in \mathbb{F}_e$ lies on $C_e$. In this case, proceeding as before, $$2\widetilde{C}_e + \sum\limits_{i=1}^r \widetilde{f_i} + \sum\limits_{i=1}^{e+2-r}\widetilde{f'_i}+E_x,$$ is a curve in $\widetilde{\mathbb{F}_{e,r}}$ not containing $q$. From this, we can conclude that $q$ is not a base point of the divisor class 
		\[2H_e + \sum\limits_{i=1}^r (F_e - E_i) + (e+2-r)F_e-E_x \in |-K_{\widetilde{\mathbb{F}_{e,r}}}|.\]
		
		By the choice of $q$, we conclude that 
		$q$ is not a base point of $|-K_{\widetilde{\mathbb{F}_{e,r}}}|$. A similar argument can be applied if the image of $x$ is in the fiber $f_i$ containing the point $p_i$ or if distinct $p_i$'s belong to the same fiber. This means that no point $q$ outside $\widetilde{C}_e, \widetilde{f_1},\dots,\widetilde{f_r},E_1,\dots,E_r,E_x$, can be a base point of $|-K_{\widetilde{\mathbb{F}_{e,r}}}|$. Therefore, the fixed components of $|-K_{\widetilde{\mathbb{F}_{e,r}}}|$ are contained in $\{\widetilde{C}_e, \widetilde{f_1},\dots,\widetilde{f_r},E_1,\dots,E_r,E_x\}$. \\
		
		(ii) Note that $-K_{\widetilde{\mathbb{F}_{e,r}}}\cdot \widetilde{C}_{e} \le -e + 2$, and  {$-e+2<0 \Leftrightarrow e \geq 3$}. This means, for $e \geq 3$, $\widetilde{C}_{e}$ is a fixed component of $|-K_{\widetilde{\mathbb{F}_{e,r}}}|$.  \\
		
		(iii) If $p_{i_1},\dots, p_{i_k}$  lie on the same fiber $f_{i_1}$ (where $k>1$) then $\widetilde{f}_{i_1} = \pi^{*}(f_{i_1}) - E_{i_1}-\dots -E_{i_k}$. So, $-K_{\widetilde{\mathbb{F}_{e,r}}}\cdot \widetilde{f}_{i_1} = -k + 2$. This is negative if and only if $k > 2$. So, when $k > 2$, we conclude that  $\widetilde{f}_{i_1}$ is a fixed component of $|-K_{\widetilde{\mathbb{F}_{e,r}}}|$. 
		
		Now suppose that $e \geq 3$. Then we already have seen that $\widetilde{C}_e$ is a fixed component of $|-K_{\widetilde{\mathbb{F}_{e,r}}}|$. So, $(-K_{\widetilde{\mathbb{F}_{e,r}}}-\widetilde{C}_{e}).\widetilde{f}_{i_1} \le -k + 1$. { So, if $k>1$ we have $(-K_{\widetilde{\mathbb{F}_{e,r}}}-\widetilde{C}_{e}).\widetilde{f}_{i_1}<0$. } Therefore when $e \geq 3$, $\widetilde{f_{i_1}}$ is a fixed component of $|-K_{\widetilde{\mathbb{F}_{e,r}}}|$, provided $k > 1$ and $p_{i_1},\dots, p_{i_k}$ are in the same fiber $f_{i_1}$.   
	\end{proof}
	\begin{theorem}\label{actual computation of Seshadri constants}
		Let $e > 0$ and $r \leq e-1$. Let $p_1,\dots,p_r \in \mathbb{F}_e$ be distinct points such that for each $i$, $p_i \notin C_e$, and for $i,j \in \{1,\dots, r\}$ with $i \neq j$,  $p_i$ and $p_j$ are not in the same fiber. Let $L = aH_e+bF_e-\sum\limits_{i=1}^r m_iE_i$ be an ample line bundle on $\mathbb{F}_{e,r}$ and $x \in \mathbb{F}_{e,r}$. 
		Then we have the following.
		\begin{itemize}
			\item[(i)] If $x$ is not contained in any of the curves $\widetilde{C}_e,\widetilde{f_1},\dots,\widetilde{f_r},E_1,\dots,E_r$, then $\varepsilon(\mathbb{F}_{e,r},L;x) = a$.\\
			\item[(ii)] If $x$ is not contained in any of the curves $\widetilde{C}_e,E_1,\dots,E_r$ and $x \in \widetilde{f_{i}}$ for some $i \in \{1,\dots, r\}$, then $\varepsilon(\mathbb{F}_{e,r},L;x) = a-m_i$. \\
			\item[(iii)] If $x \in \widetilde{C}_e \cap \widetilde{f_{i}}$ for some $i$, then $\varepsilon(\mathbb{F}_{e,r},L;x) = \min (b-ae,a-m_i)$.\\
			\item[(iv)] If $x \in \widetilde{f_{i}} \cap E_i$ for some $i$, then $\varepsilon(\mathbb{F}_{e,r},L;x) = \min (m_i,a-m_i)$.\\
			\item[(v)] If $x$ is not contained in any of the curves $\widetilde{f_1},\dots,\widetilde{f_r},E_1,\dots,E_r$ and $x \in \widetilde{C}_e$, then $\varepsilon(\mathbb{F}_{e,r},L;x) = \min (a,b-ae)$.\\
			\item[(vi)] If $x$ is not contained in any of the curves $\widetilde{C}_e,\widetilde{f_1},\dots,\widetilde{f_r}$ and $x \in E_i$ for some $i$, then $\varepsilon(\mathbb{F}_{e,r},L;x) = m_i$.
		\end{itemize}
	\end{theorem}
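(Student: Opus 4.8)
The plan is to apply Proposition~\ref{our way to compute Seshadri constant}. Since $r \le e-1 \le e+4$, it reduces the problem to
\[
\varepsilon(\mathbb{F}_{e,r}, L; x) = \inf \Big\{ \frac{L \cdot C}{\mathrm{mult}_x C} \;\Big|\; \widetilde{C} \in \mathcal{S}\Big\},
\]
where $\mathcal{S}$ consists of the $(-1)$-curves, $(-2)$-curves and fixed components of $|-K_{\widetilde{\mathbb{F}_{e,r}}}|$. The crucial first step is to list \emph{all} curves $C \subset \mathbb{F}_{e,r}$ whose strict transform lies in $\mathcal{S}$. For the $(-1)$- and $(-2)$-curves, Lemma~\ref{a = 0} forces the $(H_e,F_e)$-coefficients $(a,b)$ to be $(0,1)$, $(1,0)$ or $(0,0)$; computing the self-intersection in each case and invoking irreducibility together with the hypotheses (no $p_i$ lies on $C_e$, and no two $p_i$ share a fiber) identifies these curves as precisely $\widetilde{C}_e$ (from $(1,0)$), the strict transforms of fibers (from $(0,1)$) and the exceptional divisors $E_1,\dots,E_r$ (from $(0,0)$). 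For the fixed components of $|-K_{\widetilde{\mathbb{F}_{e,r}}}|$, Lemma~\ref{fixed components of anti-canonical} confines them to $\{\widetilde{C}_e,\widetilde{f_1},\dots,\widetilde{f_r},E_1,\dots,E_r,E_x\}$, a subset of the curves already listed (apart from $E_x$, which is not the transform of a curve in $\mathbb{F}_{e,r}$). I would stress that $\widetilde{C}_e$ always belongs to $\mathcal{S}$: depending on whether $x$ is blown up on it, its transform in $\widetilde{\mathbb{F}_{e,r}}$ has self-intersection $-e$ or $-e-1$, so it is a $(-1)$- or $(-2)$-curve for small $e$, while as soon as $-K\cdot\widetilde{C}_e<0$ it is a fixed component of the anticanonical system.

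With the list in hand, the second step records the intersection numbers
\[
L\cdot\widetilde{C}_e=b-ae,\qquad L\cdot\widetilde{f_i}=a-m_i,\qquad L\cdot f_x=a,\qquad L\cdot E_i=m_i,
\]
and the incidence pattern of these curves. All of them are smooth, so $\mathrm{mult}_x C=1$ whenever $x\in C$. Using the hypotheses one checks that $\widetilde{C}_e$ is disjoint from every $E_i$, that distinct $\widetilde{f_i}$ are disjoint, that $\widetilde{C}_e\cap\widetilde{f_i}$ and $\widetilde{f_i}\cap E_i$ are each a single transverse point, and that no three of these curves meet. This makes the six positions of $x$ in the statement mutually exclusive and exhaustive, and since all multiplicities are $1$, the Seshadri constant reduces to the minimum of $L\cdot C$ over the listed curves passing through $x$.

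The third step is then a direct case analysis. In each case one records which curves pass through $x$ — always including the strict transform of the fiber through $\pi(x)$, which equals $\widetilde{f_i}$ when $\pi(x)\in f_i$ and a general fiber $f_x$ with $L\cdot f_x=a$ otherwise, except in case (vi) where $x\in E_i$ lies off $\widetilde{f_i}$ so that no fiber meets $x$ — and reads off the minimum. This yields $a$ in (i), $a-m_i$ in (ii), $\min(b-ae,a-m_i)$ in (iii), $\min(a-m_i,m_i)$ in (iv), $\min(a,b-ae)$ in (v), and $m_i$ in (vi).

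The main obstacle is the completeness of the list in the first step: one must be certain that Lemmas~\ref{a = 0} and~\ref{fixed components of anti-canonical} jointly account for every curve contributing to the infimum, and in particular correctly track $\widetilde{C}_e$ across the thresholds $e\le 2$ and $e\ge 3$, where it passes from a $(-1)$- or $(-2)$-curve to a fixed component of $|-K_{\widetilde{\mathbb{F}_{e,r}}}|$. Overlooking $\widetilde{C}_e$ in the regime $e\ge 3$ would drop the term $b-ae$ that is decisive in cases (iii) and (v). Once the list is secured, the rest is routine intersection theory.
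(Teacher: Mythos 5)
Your proposal follows essentially the same route as the paper's own proof: reduce via Proposition \ref{our way to compute Seshadri constant} to curves whose strict transforms lie in $\mathcal{S}$, use Lemmas \ref{a = 0} and \ref{fixed components of anti-canonical} to pin these down as $\widetilde{C}_e$, the strict transforms of fibers, and the exceptional divisors, and then read off the minimum of $L\cdot C$ among the (smooth, hence multiplicity-one) candidates through $x$ in each of the six cases. Your explicit tracking of $\widetilde{C}_e$ across the thresholds (a $(-1)$- or $(-2)$-curve for $e\le 2$, a fixed component of $|-K_{\widetilde{\mathbb{F}_{e,r}}}|$ for larger $e$) and the exclusivity/exhaustiveness of the six cases are details the paper leaves implicit or relegates to a remark, but the argument is the same.
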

	\begin{proof}
		By  \eqref{epsilon1}, we have
		\begin{equation*}
			\varepsilon(\mathbb{F}_{e,r}, L; x) = \inf \Bigg\{\frac{L\cdot C}{\text{mult}_x C}~ \Bigg \vert ~ \widetilde{C} \in \mathcal{S}\Bigg\},
		\end{equation*}
		where $\mathcal{S}$ is the set consisting of all $(-1)$-curves, $(-2)$-curves, and the fixed components of $|-K_{\widetilde{\mathbb{F}_{e,r}}}|$  and $C$ is a curve in $\mathbb{F}_{e,r}$ with $\widetilde{C}$ as its strict transform in $\widetilde{\mathbb{F}_{e,r}}$. \\
		
		(i) Suppose that $x$ is not contained in $\widetilde{C}_e,\widetilde{f_1},\widetilde{f_2},\dots,\widetilde{f_r},E_1,\dots,E_r$. Let $f_x$ denote the strict transform on $\Fer$ of the fiber in $\mathbb{F}_{e}$ that contains $\pi(x)$. Then since it is a smooth curve containing $x$, we have $\text{mult}_x f_x = 1$. So, it is clear that $\widetilde{f_x}$ {$\subset$} $\widetilde{\mathbb{F}_{e,r}}$ is a $(-1)$-curve, and hence, an element of $\mathcal{S}$. Therefore, 
		\begin{equation*}
			\frac{L\cdot f_x}{\text{mult}_x f_x}= L\cdot f_x = a.
		\end{equation*}
		Now, applying Lemmas \ref{a = 0} and \ref{fixed components of anti-canonical}, we can conclude that { if $C \subset \mathbb{F}_{e,r}$ is any other curve then  $x \notin C$ or $\widetilde{C} \notin \mathcal{S}$}. This implies $f_x$ is the Seshadri curve and $\varepsilon(\mathbb{F}_{e,r}, L, x) = a$. This completes (i). \\
		
		(ii) Suppose that $x$ is not contained in $\widetilde{C}_e,E_1,\dots,E_r$ and $x \in \widetilde{f_{i}}$ for some $i \in \{1,2,\dots, r\}$. By the same arguments as in (i), $\widetilde{f_i}$ will be the Seshadri curve for that particular $i$ for which $x \in \widetilde{f_i}$. In this case, 
		\begin{equation*}
			\frac{L\cdot \widetilde{f_i}}{\text{mult}_x \widetilde{f_i}}= L\cdot \widetilde{f_i} = a-m_i,
		\end{equation*}
		thereby completing (ii). \\
		
		(iii) Suppose that $x \in \widetilde{C}_e \cap \widetilde{f_{i}}$ for some $i$. Then by the same arguments as above, we only need to compute $L\cdot \widetilde{C}_e$ and compare it with $a-m_i$. We have $L\cdot \widetilde{C}_e = b - ae$. This implies that either $\widetilde{C}_e$ or $\widetilde{f_i}$ is the Seshadri curve and $\min (b-ae,a-m_i)$ is the Seshadri constant. \\
		
		(iv) Suppose that $x \in \widetilde{f_{i}} \cap E_i$ for some $i$. We have $L\cdot E_i = m_i$. Therefore the Seshadri constant is $\min (m_i,a-m_i)$. \\
		
		(v) Suppose that $x$ is not contained in $\widetilde{f_1},\dots,\widetilde{f_r},E_1,\dots,E_r$ and $x \in \widetilde{C}_e$. We have $L\cdot \widetilde{C}_e = b-ae$. So, in this case, either $\widetilde{C}_e$ or $f_x$ will be the Seshadri curve, and so, the Seshadri constant is $\min (a,b-ae)$.\\
		
		(vi) Suppose that $x$ is not contained in $\widetilde{C}_e,\widetilde{f_1},\dots,\widetilde{f_r}$ and $x \in E_i$. Then since $L\cdot E_i = m_i < a$, we  conclude that $E_i$ is the Seshadri curve and $m_i$ is the Seshadri constant. 
	\end{proof}
	\begin{remark}
		\begin{itemize}
			\item[(i)] {Note that we assumed that the points $p_1,\dots,p_r$ are such that for each $i$, $p_i \notin C_e$, and for $i,j \in \{1,\dots, r\}$ with $i \neq j$,  $p_i$ and $p_j$ are not in the same fiber. This implies that $x \notin \widetilde{C}_e \cap E_i$ for any $i$. For, 
				if $x \in \widetilde{C}_e \cap E_i$ then 
				$p_i = \pi(x)  \in C_e$, thereby contradicting the hypothesis.  Similarly $x \notin \widetilde{f_i} \cap E_j$, where $i \neq j$ (again, if this is false then  $p_i$ and $p_j$ are in the same fiber in $\mathbb{F}_e$).}
			\item[(ii)]  As a consequence of the above,  Theorem \ref{actual computation of Seshadri constants} 
			computes the Seshadri constant of $L$ for all $x \in \Fer$. Moreover, note that the six cases considered in Theorem \ref{actual computation of Seshadri constants} are mutually exclusive.		\end{itemize}
	\end{remark}
	\begin{theorem}\label{for x in general position}
		Let $e > 0$ and $r = e$ or $r=e+1$. Let $p_1,\dots,p_r \in \mathbb{F}_e$ be very general points. Let $L = aH_e+bF_e-\sum\limits_{i=1}^r m_iE_i$ be an ample line bundle on $\mathbb{F}_{e,r}$ and let $x \in \mathbb{F}_{e,r}$ be a very general point. 
		Then $$\varepsilon(\Fer,L;x)=\min (a,b-\sum m_i),$$ where the sum runs over  the largest $e$  integers among 
		$\{m_1,\dots,m_r\}$.
		\begin{proof}
			{By Lemma \ref{fixed components of anti-canonical}, there are only finitely many fixed components of $|-K_{\widetilde{\mathbb{F}_{e,r}}}|$. Hence there are only finitely many curves $C\subset \Fer$ whose strict transform is a fixed component of $|-K_{\widetilde{\mathbb{F}_{e,r}}}|$. As union of these finitely many curves is a proper closed subset, we can choose $x$ from outside this union as $x$ is a very general point.} So, by Proposition \ref{our way to compute Seshadri constant}, to compute Seshadri constant at $x$, it is enough to take the infimum of Seshadri ratios with respect to curves $C$ passing through $x$ with $\widetilde{C}$ a $(-1)$ or $(-2)$-curve in $\widetilde{\Fer}$. Let $C = \alpha H_e + \beta F_e -n_1E_1-\cdots-n_rE_r$ be such a curve. \\

			\textbf{Case 1 :} 
			Suppose that $\widetilde{C}$ is a $(-2)$-curve.
			By Lemma \ref{-1 and -2 curves for r=e and e+1}, we only have the following four possibilities. \\
			
			(i) $\alpha=0,\beta=0$.
			In this case, $C$ is an exceptional divisor. As $x$ is a very general point, $C$ cannot pass through $x$. \\
			
			(ii)  $\alpha=1,\beta=0$,
			i.e., $C=\widetilde{C}_e$. Again, as $x$ is very general, $C$ cannot pass through $x$.\\
			
			(iii) $\alpha=0,\beta=1$. In this case,
			as $\widetilde{C}^2=-2$, $C$ has to be $F_e-E_i$ for some $i$. But this cannot happen since $x$ is very general. \\
			
			(iv) $\alpha=1,\beta=e$.
			So $C=H_e+eF_e-n_1E_1-\cdots-n_rE_r$. {As $C\cdot(F_e-E_i)=1-n_i$, and $C$ and $F_e-E_i$ are distinct reduced irreducible curves, we have $1-n_i\geq 0$}. So,
			$$\widetilde{C}^2=e-\bigg(\sum_{i=1}^{r}n_i^2+n_x^2\bigg)=-2,$$
			where $n_x=\text{mult}_xC=1$. Therefore $\sum\limits_{i=1}^r n_i^2=e+1$. This is possible only if $r=e+1$ and $n_i=1$ for all $i$, i.e., the image of $C$ in $\mathbb{F}_e$ (under the usual map of blow-up) passes through $e+2$ very general points $p_1,\dots,p_r,\pi(x)$. But we have $h^0(C_e+ef)=e+2$. This means, no curve in the linear system $|C_e + ef|$ can pass through $e+2$ very general points, which is a contradiction. \\

			\textbf{Case 2 :} 		  Suppose that $\widetilde{C}$ is a $(-1)$-curve. We again consider the same four possibilities.\\
			
			(i) If $\alpha=0,\beta=0$, no such curve exists.\\
			
			(ii) If $\alpha=1,\beta=0$, then $C = \widetilde{C}_e$. However, as $e > 0$ and $x$ is very general, $\widetilde{C}_e$ cannot pass through $x$. \\
			
			(iii) If $\alpha=0,\beta=1$, then since $\widetilde{C}^2=-1$, $C$ is the strict transform of the fiber in $\Fe$ passing through $\pi(x)$. So the Seshadri ratio with respect to $C$ is $a$. \\
			
			(iv) If $\alpha=1,\beta=e$, then with a similar explanation as in \textbf{Case 1}, we get $\sum\limits_{i=1}^{r}n_i^2=e$. This implies that $e$ of $\{n_1,\dots, n_r\}$ are 1 and others are zero. That is, any curve $C \in |C_e + ef|$ whose strict transform is a $(-1)$-curve and $\pi(x) \in C$ must pass through exactly $e$ points (with multiplicity $1$) of $p_1,\dots, p_r$. For any $e$ points of the set $\{p_1,\dots, p_r\}$, the existence of such a curve $C$ is guaranteed as $h^0(C_e + ef) = e+2$. So, for computing Seshadri constant, it is enough to get the smallest among Seshadri ratios with respect to such curves. This is precisely $b-\sum m_i$, where the sum runs over the largest $e$ numbers among $\{m_1,\dots,m_r\}$.\\
			
			From the above arguments, we can conclude that $$\varepsilon(\Fer,L;x)=\min (a,b-\sum m_i),$$ where the sum runs over the largest $e$ numbers among $\{m_1,\dots,m_r\}.$
		\end{proof}
	\end{theorem}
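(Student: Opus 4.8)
The plan is to combine the reduction provided by Proposition \ref{our way to compute Seshadri constant} with the classification of negative curves in Lemma \ref{-1 and -2 curves for r=e and e+1}, and then run a case analysis. First I would exploit genericity: by Lemma \ref{fixed components of anti-canonical} the system $|-K_{\widetilde{\Fer}}|$ has only finitely many fixed components, so the finitely many curves in $\Fer$ whose strict transforms are these fixed components form a proper closed subset, which a very general $x$ avoids. Hence in the formula of Proposition \ref{our way to compute Seshadri constant} no fixed component of the anticanonical system passes through $x$, and $\varepsilon(\Fer,L;x)$ is the infimum of the Seshadri ratios $L\cdot C/\mathrm{mult}_xC$ over curves $C$ through $x$ whose strict transform $\widetilde{C}$ is a $(-1)$- or $(-2)$-curve. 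Writing $C=\alpha H_e+\beta F_e-\sum n_iE_i$, Lemma \ref{-1 and -2 curves for r=e and e+1} limits $(\alpha,\beta)$ to $(0,0),(1,0),(0,1),(1,e)$, reducing everything to these four cases.

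Next I would clear away the non-contributing cases. The pair $(0,0)$ gives an exceptional divisor and $(1,0)$ gives $\widetilde{C}_e$; since $x$ is very general and $e>0$, neither passes through $x$. For $(0,1)$, a $(-2)$-curve must be the strict transform $F_e-E_i$ of a fiber through some $p_i$, which the generic $x$ does not meet, while the only $(-1)$-curve is the fiber $F_e$ through $\pi(x)$ (disjoint from every $E_i$ because $\pi(x)$ is generic); since $\mathrm{mult}_x F_e=1$ its Seshadri ratio is $L\cdot F_e=a$. Thus the fiber through $x$ accounts for the value $a$, and all remaining content lies in the pair $(1,e)$.

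The decisive case is $(1,e)$, i.e. $C=H_e+eF_e-\sum n_iE_i$, and here I expect the main obstacle. Intersecting $C$ with each class $F_e-E_i$, a distinct irreducible curve, gives $C\cdot(F_e-E_i)=1-n_i\ge 0$, so $n_i\in\{0,1\}$. For a $(-2)$-curve through $x$ with $\mathrm{mult}_xC=1$ one computes $\widetilde{C}^2=e-\sum n_i^2-1=-2$, hence $\sum n_i^2=e+1$; with $n_i\in\{0,1\}$ this forces $r=e+1$ and every $n_i=1$, so the image curve lies in $|C_e+ef|$ and passes through the $e+2$ very general points $p_1,\dots,p_r,\pi(x)$. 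Since $h^0(C_e+ef)=e+2$, no member of $|C_e+ef|$ can pass through $e+2$ general points, a contradiction; so no $(-2)$-curve of this shape meets a very general $x$. For a $(-1)$-curve the same computation yields $\sum n_i^2=e$, so exactly $e$ of the $n_i$ equal $1$ and the rest vanish; such a $C$ passes through $x$ and through $e$ of the $p_i$, with Seshadri ratio $L\cdot C=b-\sum m_i$ over those $e$ indices.

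To finish I would settle existence and optimization. As $h^0(C_e+ef)=e+2$, for any $e$ of the points $p_1,\dots,p_r$ there is a curve in $|C_e+ef|$ through those $e$ points and through $\pi(x)$, i.e. $e+1$ simple conditions, whose strict transform is a $(-1)$-curve of the required type; hence every ratio $b-\sum m_i$ is genuinely realized. Minimizing over the choice of the $e$ indices amounts to taking the $e$ largest multiplicities, giving $b-\sum m_i$ with the sum over the largest $e$ among $\{m_1,\dots,m_r\}$. Combining this with the value $a$ from the fiber and discarding the empty cases yields $\varepsilon(\Fer,L;x)=\min(a,\,b-\sum m_i)$. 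The subtle point throughout is the genericity bookkeeping in the $(1,e)$ analysis: one must ensure that for a very general $x$ the relevant $\widetilde{C}$ is genuinely irreducible with $\mathrm{mult}_xC=1$ and $n_i\le 1$, so that the single dimension count $h^0(C_e+ef)=e+2$ simultaneously excludes the $(-2)$-curve and guarantees the $(-1)$-curves that realize the minimum.
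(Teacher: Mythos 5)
Your proposal is correct and follows essentially the same route as the paper's proof: the same genericity reduction via Lemma \ref{fixed components of anti-canonical} and Proposition \ref{our way to compute Seshadri constant}, the same four-case split from Lemma \ref{-1 and -2 curves for r=e and e+1}, and the same dimension count $h^0(C_e+ef)=e+2$ to rule out the $(-2)$-curve in case $(1,e)$ and to produce the $(-1)$-curves realizing $b-\sum m_i$.
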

	
	\begin{remark}\label{p2blowup}
		The hypothesis in Theorem \ref{actual computation of Seshadri constants} is not applicable to the case $e=0$, since for any point $p\in \mathbb{F}_0$, there is a $C_0$ passing through $p$. Note that 
		$\mathbb{F}_{0,0} = \mathbb{F}_0 = \mathbb{P}^1 \times \mathbb{P}^1$ and Seshadri constants on 
		$\mathbb{P}^1 \times \mathbb{P}^1$ are well-known. For 
		$r\geq 1$, $\mathbb{F}_{0,r}$ is isomorphic to $\mathbb{P}^2$ blown-up at $r+1$ general points. As mentioned earlier, 
		general blow-ups of $\mathbb{P}^2$ are well-studied. 
		
		Similarly $\mathbb{F}_{1,r}$ is isomorphic to $\mathbb{P}^2$ blown-up at $r+1$ general points, for $r\geq 0$.
	\end{remark}
	
	\begin{remark}
		By Theorem \ref{actual computation of Seshadri constants}, for $r\leq e-1$, the Seshadri constant of $L=aH_e+bF_e-m_1E_1-\cdots -m_rE_r$ at a very general point $x$ in $\Fer$ is $a$. But when $r=e$ or $r=e+1$, Theorem \ref{for x in general position} says that the Seshadri constant at $x$ can be different from $a$. 
		For an explicit example, consider the line bundle $L=3H_e+4F_e-2E_1$ on $\mathbb{F}_{1,1}$. By Proposition \ref{ample}, $L$ is ample. By Theorem \ref{for x in general position}, 
		$\varepsilon(\mathbb{F}_{1,1},L;x)=2 < 3 =a $.
		
		
		The following result characterizes the ampleness of line bundles on $\mathbb{F}_{e,r}$.		
		
	\end{remark}
	
	\begin{proposition}\label{ample}
		Let $e > 0$ and $r \leq e+1$.  Let $p_1,\dots,p_r \in \mathbb{F}_e$ be distinct points such that for each $i$, $p_i \notin C_e$, and for $i,j \in \{1,\dots, r\}$ with $i \neq j$, $p_i$ and $p_j$ are not on the same fiber. Let $L = aH_e+bF_e-\sum\limits_{i=1}^r m_iE_i$ be a line bundle on $\mathbb{F}_{e,r}$. Then $L$ is ample if and only if the following conditions hold. 
		
		\begin{itemize}
			\item[(1)] $a>m_i>0$~~~$\forall~ 1\leq i\leq r$,
			\item[(2)] $b>ae$, and
			\item[(3)] $b>m_1+\cdots+m_r$.
		\end{itemize}
		\begin{proof}
			Suppose that $L$ is an ample line bundle. Then $L\cdot (F_e-E_i)=a-m_i>0$, $L\cdot E_i=m_i>0$, $L\cdot H_e=b-ae>0$. We know that $h^0(C_e+ef)>e+1 \geq r$. It guarantees the existence of a curve $C$ passing through $p_1,\dots,p_r$. Therefore $L\cdot \widetilde{C}=b-\sum\limits_{i=1}^r m_i>0$.
			
			Conversely, we assume the three conditions mentioned in the proposition and show that $L$ is ample using the Nakai-Moishezon criterion. We have
			$$L^2= -a^2e+2ab-\sum\limits_{i=1}^r m_i^2=a(b-ae)+ab-\sum\limits_{i=1}^r m_i^2.$$ This is positive because $a>0,b-ae>0$ and $ab>\sum\limits_{i=1}^r am_i>\sum\limits_{i=1}^r m_i^2$.	It now remains to prove that $L\cdot C>0$ for all irreducible curves $C$.\\
			
			Let $C = \alpha H_e+\beta F_e -n_1E_1-\cdots -n_rE_r$. As before, we will do it case by case. \\
			
			(i) Suppose that $ \alpha =0, \beta =0$. In this case, $C=E_i$ for some $i$. So, $L\cdot C=L\cdot E_i=m_i>0$. \\
			
			(ii) Suppose that $\alpha=1, \beta=0$. In this case, $C\sim H_e$. Therefore, $L\cdot C=b-ae>0$. \\
			
			(iii) Suppose that $ \alpha =0, \beta=1$. Then $C=F_e$ or $C=F_e-E_i$ for some $i$. In both the cases, $L \cdot C >0$. In fact, the exact values are, respectively, $a$ and $a-m_i$. \\
			
			(iv) Suppose that $\alpha=1, \beta =e$. Then $C=H_e+eF_e-n_1E_1-\cdots-n_rE_r$. As $\alpha=1$, we have $n_i\leq 1$. Therefore,
			$ L \cdot C=b-\sum\limits_{i=1}^r m_in_i~\geq~ b-\sum\limits_{i=1}^r m_i>0$. \\
			
			(v) Finally, suppose that $\alpha >1$ and $\beta >e$. Then $$L \cdot C=-a\alpha e+\alpha b + a\beta -\sum\limits_{i=1}^rm_in_i=\alpha (b-ae)+a\beta-\sum\limits_{i=1}^rm_in_i.$$ Now, by Lemma \ref{b at least sum of M_i's}, we have $\beta \geq \sum\limits_{i=1}^r n_i$.  So, $a\beta \geq \sum\limits_{i=1}^r a n_i \geq \sum\limits_{i=1}^r m_in_i$. Therefore, we conclude that $L\cdot  C >0$ in this case also.
			
			Hence $L$ is ample.
		\end{proof}
	\end{proposition}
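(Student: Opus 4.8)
The plan is to establish the two implications separately: necessity by intersecting $L$ against explicit irreducible curves, and sufficiency via the Nakai--Moishezon criterion.

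For necessity, I would assume $L$ ample, so that $L\cdot C>0$ for every irreducible curve $C$ on $\Fer$, and then test against four families of curves. Intersecting with $E_i$ gives $L\cdot E_i=m_i>0$, and intersecting with the strict transform $F_e-E_i$ of the fiber through $p_i$ (irreducible because $p_i$ is a smooth point of that fiber) gives $a-m_i>0$; together these are condition (1). Since each $p_i\notin C_e$, the strict transform of $C_e$ is the class $H_e$, and $L\cdot H_e=b-ae>0$ is condition (2). For condition (3) the key observation is that $h^0(\Fe,C_e+ef)=e+2>e+1\ge r$, so passing through the $r$ points imposes at most $r$ linear conditions and the class $H_e+eF_e-\sum_{i=1}^rE_i$ is effective; since $L$ is ample, its intersection with this effective class, namely $b-\sum m_i$, is positive, giving (3).

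For sufficiency I would assume (1)--(3) and check the Nakai--Moishezon conditions. The self-intersection computation $L^2=-a^2e+2ab-\sum m_i^2=a(b-ae)+\big(ab-\sum m_i^2\big)$ is positive because $a(b-ae)>0$ by (2), while $ab>a\sum m_i>\sum m_i^2$ follows from (3) and (1). It then remains to prove $L\cdot C>0$ for every irreducible curve $C=\alpha H_e+\beta F_e-\sum n_iE_i$. Using the classification of irreducible curves on $\Fe$ from \cite[Corollary 2.18, Chapter V]{Hartshorne1977}, I would split into the cases $C=E_i$, $C=H_e$, $C=F_e$ or $F_e-E_i$, and the remaining strict transforms with $\alpha\ge 1$ and $\beta\ge\alpha e$; the first three cases are immediate from (1)--(3).

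The main obstacle is the last case. Whenever $\alpha>1$ or $\beta>e$, Lemma \ref{b at least sum of M_i's} supplies $\beta\ge\sum n_i$, and then $L\cdot C=\alpha(b-ae)+a\beta-\sum m_in_i\ge\alpha(b-ae)+\sum(a-m_i)n_i>0$ using (1) and (2). The delicate boundary sub-case is $\alpha=1$, $\beta=e$, where Lemma \ref{b at least sum of M_i's} does not apply; here I would argue that $C$ and each $F_e-E_i$ are distinct irreducible curves, so $C\cdot(F_e-E_i)=1-n_i\ge 0$ forces $n_i\le 1$, whence $L\cdot C=b-\sum m_in_i\ge b-\sum m_i>0$ by (3). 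Combining all cases with the positivity of $L^2$ yields ampleness by Nakai--Moishezon.
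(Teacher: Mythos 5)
Your proposal is correct and follows essentially the same path as the paper's proof: necessity by intersecting $L$ with $E_i$, $F_e-E_i$, $H_e$ and an effective member of $|H_e+eF_e-\sum_{i=1}^r E_i|$ obtained from $h^0(C_e+ef)>r$, and sufficiency via Nakai--Moishezon with the identical case division and the same appeal to Lemma \ref{b at least sum of M_i's}. Your only (welcome) refinements are that you justify $n_i\le 1$ in the boundary case $\alpha=1$, $\beta=e$ by computing $C\cdot(F_e-E_i)\ge 0$, where the paper merely asserts it, and that your formulation ``$\alpha>1$ or $\beta>e$'' explicitly covers the sub-case $\alpha=1$, $\beta>e$, which the paper's enumeration (iv)--(v) passes over silently although its argument handles it.
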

	\begin{question} \label{qns}
		The above results lead naturally to the following questions. 
		\begin{enumerate}
			\item Is $r = e+1$ the optimal value for Theorem \ref{for x in general position} to hold? In other words, does the theorem hold when $r\ge e+2$? 
			\item Can the Seshadri constant of an ample line bundle on $\Fer$ be a non-integer, for some  $r$? 
		\end{enumerate}
		
		Note that Seshadri constants of ample line bundles on $\Fer$ are always integers when $r \le e+1$, by 
		Theorems \ref{actual computation of Seshadri constants} and \ref{for x in general position} (when $r = e$ or $r = e+1$, we consider Seshadri constants at very general points). So to answer the Question \ref{qns}(2), we need to consider $r \ge e+2$. 
		
		We answer the first question in Example \ref{exm3.13} below. Well-known results about Seshadri constants on the blow-ups of $\mathbb{P}^2$ give a positive answer for the second question when $e \le 1$ (see Remark \ref{p2blowup} and \cite[Remark 4.7(2)]{Garcia2008}).  We answer the second question for $e > 1$ in Example 
		\ref{exm3.14} below. 
	\end{question}
	
	\begin{example}\label{exm3.13}
		Consider the surface $\mathbb{F}_{1,3}$ obtained by blowing up $\mathbb{F}_1$ at three very general points. Let $L=3H_1+5F_1-2E_1-2E_2-2E_3$ be a line bundle on $\mathbb{F}_{1,3}$.
		
		First, we will show that $L$ is an ample line bundle. Clearly $L^2>0$, $L\cdot H_1>0$, $L \cdot F_1 > 0$, $L\cdot E_i>0$, $L\cdot (F_1-E_i)>0$, $L\cdot (H_1 + F_1)>0$ and $L\cdot (H_1 +F_1 - E_i)>0$ for $i=1,2,3$. 
		
		It remains to check for irreducible curves $C =\alpha H_1+\beta F_1 -n_1E_1-n_2E_2-n_3E_3$ with $\alpha>1$ and $\beta >1$. By Lemma \ref{b at least sum of M_i's}, $\beta \geq n_i+n_j$, where $i\neq j$ and $i,j \in \{1,2,3\}$. So we  conclude $3\beta \geq 2n_1+2n_2+2n_3$. Now $L\cdot C=2\alpha+3\beta - 2n_1 - 2n_2 - 2n_3>0,$ since $\alpha>0$. So $L$ is an ample line bundle. As $h^0(C_1+2f)>4$ (by Riemann-Roch), for a very general point $x \in \mathbb{F}_{1,3}$, there is a curve $C$ in $|H_1+2F_1-E_1-E_2-E_3|$ passing through $x$. Therefore $\varepsilon(\mathbb{F}_{1,3},L;x)\leq \frac{L\cdot C}{1}=2$. This example confirms that $r=e+1$ is indeed optimal.
	\end{example}
	
	\begin{example}\label{exm3.14}
		For the second question, consider the surface $\mathbb{F}_{3,6}$ obtained by blowing up $\mathbb{F}_3$ at six very general points and the line bundle $L =6H_3+19F_3-4E_1-\cdots-4E_6$ on $\mathbb{F}_{3,6}$. First, we will show that $L$ is an ample line bundle. Clearly $L^2=24>0$, $L\cdot E_i=4>0, L\cdot F_3=6>0, L\cdot (F_3-E_i)=2>0, L.H_3=1>0$. Now let $C'$ be the curve $H_3+3F_3-n_1E_1-\cdots-n_6E_6$, where $n_i\leq 1$. Then we have $L\cdot C'>0$. So it remains to show $L\cdot C>0$ for all irreducible curves $C =\alpha H_3+\beta F_3-n_1E_1-\cdots-n_6E_6\subset \mathbb{F}_{3,6}$ with $\alpha>1$ and $\beta>3$. By Lemma \ref{b at least sum of M_i's}, $\beta \geq n_{i_1}+n_{i_2}+n_{i_3}+n_{i_4}$ for any distinct $i_1,i_2,i_3,i_4 \in \{1,2,\cdots, 6\}$. So, we can conclude that $6\beta\geq 4(n_1+\cdots+n_6)$. This gives $L\cdot C=\alpha+6\beta - 4(n_1+\cdots+n_6)>0$. So $L$ is ample.
		
		Now to compute the Seshadri constant of $L$ at a very general point $x\in \mathbb{F}_{3,6}$, we can choose $x$ to be outside the curves whose strict transforms on $\widetilde{\mathbb{F}_{e,r}} = \text{Bl}_x(\mathbb{F}_{e,r})$ are the fixed components of $|-K_{\widetilde{\mathbb{F}_{e,r}}}|$.
		So, by Proposition \ref{our way to compute Seshadri constant}, it is enough to compute the infimum of Seshadri ratios with respect to curves passing through $x$ whose strict transforms on $\widetilde{\mathbb{F}_{e,r}}$ are $(-1)$ or $(-2)$-curves.\\
		Let
		$$S=\Big\{D=\alpha H_3+\beta F_3-n_1E_1-\cdots-n_6E_6-n_xE_x \Big | D^2=K_{\widetilde{\mathbb{F}_{e,r}}}\cdot D=-1 \text{ or } D^2=K_{\widetilde{\mathbb{F}_{e,r}}}\cdot D - 2 =-2\Big\}.$$ By \cite[Table 1]{Lee-Shin}, there are 480 elements in $S$. We know that the Seshadri constant is achieved by an irreducible curve passing through $x$ whose strict transform is a (-1) or a (-2)-curve on $\widetilde{\mathbb{F}_{e,r}}$. This gives us $\alpha\geq 0,\beta \geq 3\alpha, n_x\geq 1$. In fact, going through 480 elements in $S$ by a computer calculation, we see that there are only 77 elements satisfying these conditions. Denoting $E_1+E_2+\cdots+E_6$ by $E$, we list out all these $77$ possibilities below. 
		\begin{enumerate}
			\item $E_i-E_x$ ~~ $\forall ~ 1\leq i\leq 6$.
			\item $F_3-E_x$.
			\item $F_3-E_i-E_x$ ~~ $\forall~ 1\leq i\leq 6$.
			\item $H_3+3F_3-E_i-E_j-E_k-E_x$ ~~ $\forall~ 1\leq i<j<k\leq 6$.
			\item $H_3+3F_3-E+E_i+E_j-E_x$ ~~ $\forall~ 1\leq i<j\leq 6$.
			\item $H_3+4F_3-E+E_i-E_x$ ~~ $\forall~ 1\leq i\leq 6$.
			\item $H_3+4F_3-E-E_x$.
			\item $2H_3+6F_3-E+E_i+E_j-E_x$ ~~ $\forall~ 1\leq i<j\leq 6$.
			\item $2H_3+6F_3-E+E_i-2E_x$ ~~ $\forall~ 1\leq i\leq 6$.
			\item $3H_3+9F_3-2E-2E_x$.
		\end{enumerate}
		We can see that as $x$ is very general, the divisors (1) and (3) are not effective. If divisors in (5) are effective, there are curves in the linear system $|C_3+3f|$ that pass through 5 very general points, which is not possible as $h^0(C_3+3f)=5$. So they are not effective. Similarly, we can see that the divisor in (7) is also not effective.
		
		For curves $C\subset \Fer$ whose strict transforms are the remaining divisors in the above list, the least value of $\frac{L\cdot C}{n_x}$ is attained by curves whose strict transforms are divisors in (10). By the Reimann-Roch Theorem, we have $h^0(3C_3+9f)>21$. So, we can choose $C\in |3H_3+9F_3-2E_1-\cdots-2E_6|$ passing through $x$ with multiplicity at least $2$. We know that the multiplicity has to be less than or equal to 3. In fact, if $\text{mult}_x = 3$, the Seshadri ratio will be $\frac{9}{3} = 3$. So $\varepsilon(\mathbb{F}_{3,6},L;x)\le 3$.
		
		But one can check that there is no $(-1)$ or $(-2)$-curve for which the Seshadri ratio is less than or equal to 3 by going through the above list. So  $\text{mult}_xC=2$ and 
		$\frac{L\cdot C}{\text{mult}_xC}=\frac{9}{2}.$
		Hence we conclude 
		$$\varepsilon(\mathbb{F}_{3,6},L;x)=4.5.$$
	\end{example}

	\section{Linear systems on $\Fe$}\label{LinearSystem} 
	Let $a,b$ be non-negative integers. 
	Let $\Li(a,b)$ denote the complete linear system $|aC_e+bf|$ on $\Fe$. Let $p_1,\dots,p_r$ be very general points in $\Fe$. For non-negative integers $m_1,\dots,m_r$, let $\Li(a,b,m_1,\dots,m_r)$ denote the linear system of curves in $\Li(a,b)$ passing through $p_1,\dots,p_r$ with multiplicities at least $m_1,\dots,m_r$ respectively.
	
	Define the \textit{virtual dimension} and the\textit{ expected dimension} of $\Lia$, denoted by $v (\Lia)$ and $e(\Lia)$ respectively, as follows:
	$$v(\Lia) = \text{dim}(\Li(a,b))-\sum_{i=1}^{r}{m_i+1 \choose 2},$$
	$$e(\Lia) = \text{max}  (v(\Lia),-1).$$\
	
	Clearly, $\text{dim}( \Lia) \geq e(\Lia)$. If this inequality is strict, we call $\Lia$ \textit{special}, and otherwise, \textit{non-special}.
	
	Let $\Fer$ denote the blow-up of $\Fe$ at $p_1,\dots,p_r$ and $\Li$  denote the linear system of curves consisting strict transform of curves in $\Lia$. We define the virtual dimension and expected dimension of $\Li$ to be the same as the corresponding values of $\Lia$: $$v(\Li)=v(\Lia) \text{ and }e(\Li)=e(\Lia).$$ 
	
	By \cite[Proposition 2.3]{Laf2002},  $v(\Li)=\frac{\Li^2-K_{\mathbb{F}_{e,r}}.\Li}{2}$. 
	
	We now recall the notion of a \textit{$(-1)$-special} linear system defined in \cite{Laf2002} by means of the following algorithm.
	
	\begin{algorithm}\label{algo}
		Let $\Li$ be a linear system as above.  
		\begin{itemize}
			\item[Step 1 :] If $E$ is a $(-1)$-curve with $-t =\Li\cdot E<0$, replace $\Li$ by $\Li - tE$. 
			\item[Step 2 :] If $\Li \cdot  \widetilde{C}_e < 0$, replace $\Li$ by $\Li -\widetilde{C}_e$, where $\widetilde{C}_e$ is the strict transform of $C_e$.
			\item[Step 3 :] If $ \Li \cdot  \widetilde{C}_e \geq 0$ and $\Li\cdot E\geq 0$ for every $(-1)$-curve $E$, stop the process. Else, go to Step 1.
		\end{itemize}
	\end{algorithm}
	Note that this process will end after finitely many steps. Let $\mathcal{M}$ denote the linear system obtained after the above procedure is complete. Then $\text{dim}(\mathcal{M})=\text{dim}(\Li)$.
	\begin{definition}
		A linear system $\Li$ is called $(-1)$-special if $v(\mathcal{M}) > v(\Li)$.
	\end{definition}
	
	Suppose that $\Li$ is $(-1)$-special. Then we have $$\text{dim}(\Li) = \text{dim} (\mathcal{M}) \geq v(\mathcal{M})>v(\Li).$$ So  a $(-1)$-special linear system is always special. It is natural to ask if the converse holds. In this direction, the following conjecture was proposed for a Hirzebruch surface by Laface (\cite[ Conjecture 2.6]{Laf2002}) as an analogue of the Hirschowitz version of the SHGH conjecture.
	\begin{conjecture}\label{conj1}
		A linear system $\Lia$ on $\mathbb{F}_e$ is special if and only if its strict transform is $(-1)$-special.
	\end{conjecture}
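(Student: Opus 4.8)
The plan is to separate the two implications of Conjecture \ref{conj1}. One direction is already in hand: the discussion immediately preceding the statement shows that a $(-1)$-special system is special, since $\dim(\Li)=\dim(\mathcal{M})\ge v(\mathcal{M})>v(\Li)$. So the entire content lies in the converse, that every special linear system is $(-1)$-special.

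For the converse I would first reduce to a statement purely about the terminal system $\mathcal{M}$ produced by Algorithm \ref{algo}. Since $\dim(\Li)=\dim(\mathcal{M})$ and $v(\mathcal{M})\ge v(\Li)$ always hold, it suffices to prove that every terminal system---one satisfying $\mathcal{M}\cdot\widetilde{C}_e\ge 0$ and $\mathcal{M}\cdot E\ge 0$ for all $(-1)$-curves $E$---is non-special, that is, $\dim(\mathcal{M})=\max(v(\mathcal{M}),-1)$. Indeed, granting this, if $\Li$ is special then $\max(v(\mathcal{M}),-1)=\dim(\mathcal{M})=\dim(\Li)>\max(v(\Li),-1)$, which forces $v(\mathcal{M})>v(\Li)$ and hence $(-1)$-speciality. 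Thus the conjecture is equivalent to the non-speciality of all terminal (``$(-1)$-reduced'') systems.

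To attack non-speciality of a terminal $\mathcal{M}$, I would exploit the ruling $\Fe\to\mathbb{P}^1$ and run a Horace-type degeneration adapted to Hirzebruch surfaces. Restricting along a fiber $f$ gives the exact sequence
$$0\to \Li(a,b-1,\ldots)\to \Li(a,b,\ldots)\to \Li(a,b,\ldots)|_f\to 0,$$
and restricting along $C_e$ gives the analogous sequence with $a$ dropping by one; since both $f$ and $C_e$ are rational, the restricted systems are linear systems on $\mathbb{P}^1$, whose dimensions are completely understood. Specializing some of the very general points $p_1,\dots,p_r$ onto a fiber (or onto $C_e$) and applying the trace and residual sequences would yield an induction on $a+b$, with base cases $a=0$ or small $b$ handled directly by \cite[Corollary 2.18, Chapter V]{Hartshorne1977} and Riemann--Roch. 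For small $e$ the identifications in Remark \ref{p2blowup} of $\Fer$ with blow-ups of $\mathbb{P}^2$ let one import known SHGH results, and Dumnicki's theorem (\cite[Theorem 6]{Duminicki2010}) already settles the equimultiple case with multiplicities bounded by $8$.

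The hard part will be exactly the inductive step, and it is where the genuine SHGH difficulty resides. Semicontinuity controls dimensions in the wrong direction: collapsing points onto $f$ or $C_e$ can only increase $h^0$, so one must show the specialized configuration is \emph{no more} special than the general one, which requires the delicate bookkeeping of the differential Horace method, distributing each multiplicity between its trace on the curve and its residual off the curve. Guaranteeing that the residual system remains terminal, and that no unexpected $(-1)$-curve or extra copy of $\widetilde{C}_e$ emerges after specialization, is precisely the obstruction that keeps the SHGH conjecture open on $\mathbb{P}^2$. Consequently I expect a full proof of Conjecture \ref{conj1} in general to be out of reach by these elementary means, and would regard the reduction to non-speciality of terminal systems, together with the inductive framework above, as the realistic deliverable.
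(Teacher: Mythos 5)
The statement you were asked to prove is Conjecture \ref{conj1}, which the paper does not prove and cannot be expected to: it is Laface's Hirzebruch-surface analogue of the SHGH conjecture, recorded in the paper as an open problem with only Dumnicki's partial result (equal multiplicities bounded by $8$) cited in its favor. So there is no proof in the paper to compare against, and your closing assessment --- that a full proof is out of reach by these methods --- is the honest and correct conclusion, not a defect of your write-up relative to the paper.

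Within that constraint, the parts you actually establish are sound. The forward implication ($(-1)$-special implies special) is exactly the paper's observation preceding the conjecture. Your reduction of the converse to the non-speciality of terminal systems is also valid, and it is worth noting why: both steps of Algorithm \ref{algo} never decrease virtual dimension. In Step 1, replacing $\Li$ by $\Li-tE$ changes $v$ by $-t\,\Li\cdot E+\tfrac{t^2E^2+tK_{\Fer}\cdot E}{2}=t^2-\tfrac{t^2+t}{2}=\binom{t}{2}\geq 0$; in Step 2, replacing $\Li$ by $\Li-\widetilde{C}_e$ changes $v$ by $-\Li\cdot\widetilde{C}_e+\bigl(p_a(\widetilde{C}_e)-1\bigr)=-\Li\cdot\widetilde{C}_e-1\geq 0$ since $\widetilde{C}_e$ is smooth rational and $\Li\cdot\widetilde{C}_e\leq -1$. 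Combined with $\dim(\mathcal{M})=\dim(\Li)$, non-speciality of every terminal system would indeed force $v(\mathcal{M})>v(\Li)$ for any special $\Li$, i.e.\ $(-1)$-speciality. The genuine gap is the one you name yourself: the inductive (differential Horace) step, where semicontinuity runs the wrong way under specialization and one must rule out unexpected speciality of the degenerate configuration. That gap is precisely why Conjecture \ref{conj1} remains open, so your deliverable --- the easy direction plus the reduction to terminal systems --- is the most that can legitimately be claimed.
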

	In this direction we propose the following.
	\begin{conjecture}\label{conj2}
		If general curve of a non-empty linear system $\Li$ on $\mathbb{F}_{e,r}$ is reduced and $\widetilde{C}_e$ is not a fixed component of $\Li$, then $\Li$ is non-special.
	\end{conjecture}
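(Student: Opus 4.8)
The plan is to convert non-speciality into the vanishing of a single cohomology group and then dismantle $\Li$ by peeling off the two distinguished rational curves on $\mathbb{F}_{e,r}$, namely the negative section $\widetilde{C}_e$ and the fibers, until only a nef class survives. Writing $\Li = aH_e + bF_e - \sum_{i=1}^r m_i E_i$, Riemann--Roch on the rational surface $\mathbb{F}_{e,r}$ gives $\chi(\Li) = v(\Li) + 1$, so whenever $v(\Li) \ge -1$ the system is non-special precisely when $h^1(\mathcal{O}(\Li)) = h^2(\mathcal{O}(\Li)) = 0$. Since $\Li$ is effective and $-K_{\mathbb{F}_{e,r}}$ is effective in the relevant range by Proposition \ref{Condition for a smooth rational surface to be anti-canonical}, Serre duality yields $h^2(\mathcal{O}(\Li)) = h^0(\mathcal{O}(K_{\mathbb{F}_{e,r}} - \Li)) = 0$, so everything reduces to proving $h^1(\mathcal{O}(\Li)) = 0$.

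Because $\Li$ is non-empty and $\widetilde{C}_e$ is not a fixed component, every member moves off $\widetilde{C}_e$, forcing $\Li \cdot \widetilde{C}_e \ge 0$; with $\widetilde{C}_e = H_e$ for very general $p_i$ this is exactly $b \ge ae$. Likewise a general fiber $\widetilde{f}$ avoids every $p_i$, so $\widetilde{f} \sim F_e$ and $\Li \cdot \widetilde{f} = a \ge 0$. I would then run an induction using the two restriction sequences
$$0 \to \mathcal{O}(\Li - \widetilde{C}_e) \to \mathcal{O}(\Li) \to \mathcal{O}_{\widetilde{C}_e}(\Li) \to 0$$
and
$$0 \to \mathcal{O}(\Li - \widetilde{f}) \to \mathcal{O}(\Li) \to \mathcal{O}_{\widetilde{f}}(\Li) \to 0.$$
Since $\widetilde{C}_e \cong \widetilde{f} \cong \mathbb{P}^1$ and the restricted degrees $b - ae$ and $a$ are non-negative, both quotient sheaves have vanishing $h^1$; the long exact sequences then reduce $h^1(\mathcal{O}(\Li)) = 0$ to the same vanishing for the peeled classes $\Li - \widetilde{C}_e$ and $\Li - \widetilde{f}$. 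Alternating these moves lowers $a$ and $b$ and, after the rigid negative curves are stripped away, reduces the problem to the nef core of $\Li$.

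To organise this peeling I would write $\Li = M + F$ with $M$ movable and $F$ fixed. The hypothesis that a general member is reduced forces $F$ to be reduced and the general member of $M$ to be reduced, so no curve occurs in the base locus with multiplicity $\ge 2$. The irreducible components of $F$ are negative curves, hence, by the classification in Lemmas \ref{a = 0} and \ref{-1 and -2 curves for r=e and e+1}, fibers $F_e - E_i$, exceptional curves $E_i$, curves in $|C_e + ef|$, or $\widetilde{C}_e$ itself; the last is excluded by hypothesis, and each of the remaining candidates is a smooth rational curve met non-negatively by the residual system. Peeling these reduced components off one at a time by the same restriction sequences leaves precisely the task of proving $h^1(\mathcal{O}(M)) = 0$ for the nef class $M$.

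The hard part will be exactly this last vanishing: $h^1 = 0$ for a nef class on $\mathbb{F}_{e,r}$ when the multiplicities $m_i$ at the very general points are large. Once $\widetilde{C}_e$ and the fibers are removed, the residual obstruction is governed entirely by how the imposed multiplicities interpolate at very general points, which is the heart of the Nagata/SHGH circle of problems; stripped of the Hirzebruch-specific curve $\widetilde{C}_e$, the statement degenerates to the assertion that a reduced general member on a blow-up already forces non-speciality, a form of the SHGH conjecture itself. I therefore expect a complete proof to need either the full force of Conjecture \ref{conj1} applied to systems with reduced general member, or an independent higher-cohomology vanishing for nef classes whose general member is reduced. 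In the tractable range $r \le e+2$, where the negative curves are completely listed, one can instead check directly that the peeling terminates at a class with vanishing $h^1$, which is precisely the mechanism behind the Weak SHGH-type statement the paper establishes for those $r$.
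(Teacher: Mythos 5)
The statement you set out to prove is Conjecture \ref{conj2}: in the paper it is an \emph{open conjecture}, not a theorem, and the paper offers no unconditional proof of it. The only results the paper proves about it are conditional or weaker: Conjecture \ref{conj1} implies Conjecture \ref{conj2} (via Lemma \ref{-1 special Lemma}, a purely combinatorial analysis of Algorithm \ref{algo} showing that a $(-1)$-special system is non-reduced or has $\widetilde{C}_e$ as a fixed component), Conjecture \ref{conj2} implies Conjecture \ref{conj3}, and Conjecture \ref{conj3} alone is proved unconditionally for $r \le e+2$. Your proposal, to its credit, is honest about its own status: after the correct standard reductions (Riemann--Roch giving $\chi(\Li)=v(\Li)+1$, Serre duality killing $h^2$, and peeling $\widetilde{C}_e$ and fibers via restriction sequences to rational curves of non-negative degree), you concede that the terminal vanishing $h^1(\mathcal{O}(M))=0$ for the nef core $M$ is ``the heart of the Nagata/SHGH circle of problems'' and would require Conjecture \ref{conj1} or an independent vanishing theorem. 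That concession \emph{is} the gap: the peeling steps remove none of the actual difficulty, since subtracting $\widetilde{C}_e$ and general fibers leaves the imposed multiplicities $m_i$ at the very general points untouched, and $h^1$-vanishing for nef classes with very general multiple base points on a rational surface is precisely the open content of SHGH-type statements. What you have is a correct reformulation of the conjecture as an equivalent open problem, not a proof; no blind attempt could do better, because the statement is genuinely open.

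Two further inaccuracies are worth flagging. First, your classification of the components of the fixed part $F$ appeals to Lemmas \ref{a = 0} and \ref{-1 and -2 curves for r=e and e+1}, but those lemmas concern $(-1)$- and $(-2)$-curves on the further blow-up $\widetilde{\mathbb{F}_{e,r}}$ and are proved only for $r\le e-1$, respectively $r=e$ or $e+1$, while Conjecture \ref{conj2} is asserted for arbitrary $r$; worse, the assertion that every irreducible negative curve on $\Fer$ is a $(-1)$-curve, a fiber component, an exceptional curve, a curve in $|C_e+ef|$, or $\widetilde{C}_e$ is essentially Conjecture \ref{conj3}, which the paper \emph{derives from} Conjecture \ref{conj2}, so invoking it here is circular. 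Second, your closing claim about $r\le e+2$ misreads the paper: Theorem 4.9 proves Conjecture \ref{conj3} (the negative-curve classification) in that range by an intersection-theoretic argument using Lemmas \ref{Xumulti} and \ref{b at least sum of M_i's}, with no cohomology peeling, and since the implication runs Conjecture \ref{conj2} $\Rightarrow$ Conjecture \ref{conj3}, that theorem does not yield Conjecture \ref{conj2} even for $r\le e+2$. If you want an actually provable statement along your lines, the realistic targets are the paper's: the implication Conjecture \ref{conj1} $\Rightarrow$ Conjecture \ref{conj2}, or Conjecture \ref{conj3} for small $r$.
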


	\begin{lemma} \label{-1 special Lemma}
		If a linear system $\Li$ is $(-1)$-special, then $\Li$ is non-reduced or $\widetilde{C}_e$ is a fixed component of $\Li$.
	\end{lemma}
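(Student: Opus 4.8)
The plan is to prove the contrapositive: assuming the general curve of $\Li$ is reduced and that $\widetilde{C}_e$ is not a fixed component of $\Li$, I will show that $\Li$ is not $(-1)$-special, i.e.\ that $v(\mathcal{M})=v(\Li)$. The starting point is to record how the virtual dimension changes under the two reduction steps of Algorithm \ref{algo}, using $v(\Li)=\tfrac{\Li^2-K_{\Fer}\cdot\Li}{2}$. For a $(-1)$-curve $E$ (so $E^2=-1$ and $K_{\Fer}\cdot E=-1$) a Step~1 replacement $\Li\mapsto\Li-tE$ gives $v(\Li-tE)=v(\Li)+\binom{t}{2}$, which is strictly larger exactly when $t\ge 2$. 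Since $\widetilde{C}_e=H_e$ satisfies $\widetilde{C}_e^2=-e$ and $K_{\Fer}\cdot\widetilde{C}_e=e-2$, a Step~2 replacement $\Li\mapsto\Li-\widetilde{C}_e$, applied when $\Li\cdot\widetilde{C}_e=-s<0$ for some $s\ge 1$, gives $v(\Li-\widetilde{C}_e)=v(\Li)-\Li\cdot\widetilde{C}_e-1=v(\Li)+(s-1)$, strictly larger exactly when $s\ge 2$. Thus a strict increase of $v$ can only arise from a $(-1)$-curve met with multiplicity $\ge 2$ or from $\widetilde{C}_e$ met with multiplicity $\ge 2$.

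Next I would set up the invariant ``the general member of the current system is reduced and $\widetilde{C}_e$ is not a fixed component of it'' and show it is preserved along the algorithm while forcing every step to leave $v$ unchanged. Step~2 never triggers: if $\Li\cdot\widetilde{C}_e<0$ then the irreducible curve $\widetilde{C}_e$ is contained in every member, contradicting that it is not a fixed component. For a Step~1 reduction at a $(-1)$-curve $E$ with $\Li\cdot E=-t<0$, writing a general member as $C=\mu E+C'$ with $E\not\subseteq C'$ and using $E^2=-1$ gives $-t=C\cdot E=-\mu+C'\cdot E$, so $\mu=t+C'\cdot E\ge t$; reducedness forces $\mu=1$, hence $t=1$ and the change in $v$ is $0$. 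Passing to $\Li-E$ merely deletes the fixed component $E$ from the general member, so that member stays reduced; and since $E\ne\widetilde{C}_e$ (otherwise $\Li\cdot\widetilde{C}_e<0$ would make $\widetilde{C}_e$ fixed) and the new general member is contained in the old one, $\widetilde{C}_e$ remains non-fixed. By induction every step of Algorithm \ref{algo} preserves the invariant and keeps $v$ constant, whence $v(\mathcal{M})=v(\Li)$ and $\Li$ is not $(-1)$-special.

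The main point requiring care is the bookkeeping that both properties in the invariant survive each reduction, together with the standard translation between an irreducible curve having negative intersection with $\Li$ and its being a fixed component occurring with a definite multiplicity in the general member (and the identification of the general member of $\Li-E$ with that of $\Li$ with $E$ removed). A small case to keep in mind is $e=1$, where $\widetilde{C}_e$ is itself a $(-1)$-curve and could a priori be selected in Step~1; this is harmless, since any negative intersection with $\widetilde{C}_e$ again forces it to be a fixed component, which the hypothesis excludes. I expect the induction itself to be routine once the multiplicity identity $\mu=t+C'\cdot E$ and the two virtual-dimension formulas are in place.
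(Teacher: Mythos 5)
Your proposal is correct and takes essentially the same route as the paper's proof: both trace Algorithm \ref{algo} step by step, using reducedness to force every triggered Step 1 to have $t=1$ (so the virtual dimension is unchanged) and the observation that a Step 2 trigger is exactly the statement that $\widetilde{C}_e$ is a fixed component; the paper merely argues in the direct form (assuming $(-1)$-speciality and reducedness, a Step 2 trigger must eventually occur) rather than your contrapositive. If anything, your explicit increment formulas $\binom{t}{2}$ and $s-1$ and the invariant-preservation induction make precise what the paper compresses into ``using similar arguments as above,'' so no gap to report.
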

	\begin{proof}
		If $\Li$ is $(-1)$-special and non-reduced, there is nothing to prove. So we assume $\Li$ is reduced and prove that in this case $\widetilde{C}_e$ is a fixed component of $\Li$. As $\Li$ is reduced, $\Li \cdot E \geq -1$ for every $(-1)$-curve $E$. This is because $\Li \cdot E < -1$ implies $(\Li - E) \cdot E < 0$. This means $2E$ is a fixed component of the linear system $\Li$, thereby contradicting the fact that $\Li$ is reduced. Now, suppose that $\Li \cdot E \ge 0$ for all $(-1)$-curves $E$. Then the only way for virtual dimension to increase during Algorithm \ref{algo} is when $\Li \cdot \widetilde{C}_e < 0$. This implies $\widetilde{C}_e$ is a component of $|\Li|$, thereby completing the proof.
		
		Now, if $E$ is a $(-1)$-curve such that $\Li \cdot E = -1$, we have
		$$v(\Li-E) = \frac{(\Li - E)^2 - K_{\Fer} \cdot (\Li - E)}{2} = \frac{\Li^2 - 2\Li \cdot E + E^2 - K_{\Fer} \cdot \Li + K_{\Fer} \cdot E}{2} = v(\Li).$$ So in this case, we proceed to Step 2 of Algorithm \ref{algo}. If $(\Li - E) \cdot \widetilde{C}_e < 0$, then $\widetilde{C}_e$ is a component of $|\Li - E|$, and so, a component of $|\Li|$ as well (as $E$ is a fixed component of $\Li$). If in Step 2 $(\Li - E) \cdot \widetilde{C}_e \ge 0$, we go to 
		Step 3. But since we know that $\Li$ is $(-1)$-special, the virtual dimension has to increase. So if $D$ is another $(-1)$-curve such that $(\Li-E)\cdot D=-1$, we can see that $v(\Li-E-D)=v(\Li)$. Again if $(\Li-E-D)\cdot \widetilde{C}_e<0$, $\widetilde{C}_e$ is a fixed component of $|L|$ as before. If not, we again proceed to Step 1. But then, using similar arguments as above, we see that the virtual dimension does not increase in Step 1. This eventually implies that $\widetilde{C}_e$ is a fixed component of $\Li$.
	\end{proof}
	\begin{theorem}
		Conjecture \ref{conj1} implies Conjecture \ref{conj2}.   
	\end{theorem}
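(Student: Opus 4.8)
The plan is to argue by contraposition, combining the hypothesized Conjecture \ref{conj1} with the already-established Lemma \ref{-1 special Lemma}. So I would assume Conjecture \ref{conj1} holds, and let $\Li$ be a non-empty linear system on $\Fer$ whose general curve is reduced and for which $\widetilde{C}_e$ is not a fixed component; the goal is to show $\Li$ is non-special. Since $\Li$ is the strict transform of a linear system $\Lia$ on $\Fe$, with the multiplicities $m_i$ read off from the coefficients of the exceptional classes $E_i$, the notion of specialness for $\Li$ agrees with that for $\Lia$ by the definitions $v(\Li)=v(\Lia)$ and $e(\Li)=e(\Lia)$ recorded earlier.

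The core of the argument is a short chain of implications. Suppose, toward a contradiction, that $\Li$ is special. By Conjecture \ref{conj1}, its strict transform is then $(-1)$-special. I would then invoke Lemma \ref{-1 special Lemma}, which asserts that a $(-1)$-special system is either non-reduced or has $\widetilde{C}_e$ as a fixed component. Either alternative contradicts the standing hypotheses on $\Li$: the general curve of $\Li$ was assumed reduced, and $\widetilde{C}_e$ was assumed not to be a fixed component. This contradiction forces $\Li$ to be non-special, which is precisely the assertion of Conjecture \ref{conj2}. Schematically, the deduction is: special $\Rightarrow$ $(-1)$-special (by Conjecture \ref{conj1}) $\Rightarrow$ non-reduced or $\widetilde{C}_e$ a fixed component (by Lemma \ref{-1 special Lemma}), contrary to hypothesis.

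Since the substantive content has been front-loaded into Lemma \ref{-1 special Lemma}, I expect no real computational obstacle here; the only points requiring care are matters of bookkeeping. First, I would make explicit that the hypothesis ``the general curve of $\Li$ is reduced'' in Conjecture \ref{conj2} is exactly the negation of the ``$\Li$ is non-reduced'' alternative appearing in Lemma \ref{-1 special Lemma}, so that the two statements are directly compatible. Second, I would confirm that passing between $\Lia$ and its strict transform preserves specialness, which is built into the definitions, so that Conjecture \ref{conj1}---phrased for $\Lia$ on $\Fe$---applies legitimately to the system $\Li$ on $\Fer$. With these identifications in place, the implication follows in a single line.
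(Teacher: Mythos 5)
Your proposal is correct and follows essentially the same route as the paper's proof: the contrapositive, using Conjecture \ref{conj1} to pass from special to $(-1)$-special, then Lemma \ref{-1 special Lemma} to contradict the reducedness and fixed-component hypotheses. The extra bookkeeping you note about identifying specialness of $\Li$ with that of $\Lia$ is consistent with the paper's definitions and does not change the argument.
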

	
	\begin{proof}
		
		We prove the contrapositive to Conjecture \ref{conj2}. Let $\mathcal{L}$ be a special linear system. Conjecture \ref{conj1} implies that $\mathcal{L}$ is $(-1)$-special. By Lemma \ref{-1 special Lemma}, this implies $\Li$ is non-reduced or $\widetilde{C}_e$ is a fixed component of $\Li$.
	\end{proof}
	\begin{question}
		Is the Conjecture \ref{conj1} equivalent to Conjecture \ref{conj2}?
	\end{question}
	
	We now propose the following conjecture for a Hirzebruch surface analogous to the $(-1)$-curves conjecture for $\mathbb{P}^2$.
	\begin{conjecture}\label{conj3}
		Let $\Fer$ denote the blow-up of  $\Fe$ at $r$ very general points $p_1,\dots,,p_r$. If $C$ is an irreducible and reduced curve in $\Fer$ with negative self intersection, then $C$ is either a $(-1)$-curve or the strict transform of $C_e$ on $\Fer$.
	\end{conjecture}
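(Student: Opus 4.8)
The plan is to pull a negative curve on $\Fer$ back to $\Fe$ and classify the possible numerical classes. Write $C = \alpha H_e + \beta F_e - \sum_{i=1}^r n_i E_i$. If $C$ is one of the exceptional divisors $E_i$, it is a $(-1)$-curve and we are done, so we may assume $C$ is the strict transform of an irreducible reduced curve $\mathcal{C} = \alpha C_e + \beta f$ on $\Fe$, with $n_i = \text{mult}_{p_i}\mathcal{C} \ge 0$. Since the effective cone of $\Fe$ is $\mathbb{R}_{\ge 0}[C_e] + \mathbb{R}_{\ge 0}[f]$, we get $\alpha,\beta \ge 0$, and the irreducibility criterion \cite[Corollary 2.18, Chapter V]{Hartshorne1977} forces $\beta \ge \alpha e$ whenever $\alpha \ge 1$ and $\mathcal{C}\neq C_e$. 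If $C$ meets none of the $p_i$ (all $n_i=0$), then $C^2 = \mathcal{C}^2 < 0$, and as $C_e$ is the only irreducible curve on $\Fe$ of negative self-intersection, $\mathcal{C}=C_e$ and $C=\widetilde{C}_e$. It remains to treat the case in which some $n_j>0$, where I will show $C$ must be a $(-1)$-curve.

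I would split this case according to $\alpha$. If $\alpha=0$, irreducibility gives $\mathcal{C}=f$, and since the $p_i$ are very general the fiber $\mathcal{C}$ meets exactly one of them simply, so $C=F_e-E_j$ is a $(-1)$-curve. For $\alpha\ge 1$ the key input is the genus inequality: as $\mathcal{C}$ is irreducible with multiplicity $n_i$ at $p_i$, its geometric genus is non-negative, hence $p_a(\mathcal{C}) \ge \sum_i \binom{n_i}{2}$. Writing $p_a(\mathcal{C}) = 1 + \tfrac12(\mathcal{C}^2 + K_{\Fe}\cdot \mathcal{C})$ with $\mathcal{C}^2 = -\alpha^2 e + 2\alpha\beta$ and $K_{\Fe}\cdot\mathcal{C} = (e-2)\alpha - 2\beta$, and combining with $C^2 = \mathcal{C}^2 - \sum n_i^2 \le -1$, a direct computation yields the bound $2\beta \le (e-2)\alpha + \sum_i n_i + 1$.

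For $\alpha=1$ we have $\mathcal{C}=C_e+\beta f$ with $\beta\ge e$ and $p_a(\mathcal{C})=0$, so the genus inequality forces every $n_i\in\{0,1\}$; hence $\sum n_i^2=\sum n_i=s$, the number of marked points lying on $\mathcal{C}$, and $C$ is a smooth rational curve. Now $|C_e+\beta f| = |C_e+ef|+(\beta-e)f$ is base-point-free for $\beta\ge e$, so very general simple points impose independent conditions; since $\mathcal{C}$ exists through $s$ of them, $s\le \dim|C_e+\beta f| = 2\beta-e+1$. On the other hand $C^2=-e+2\beta-s\le -1$ gives $s\ge 2\beta-e+1$, whence $s=2\beta-e+1$, $C^2=-1$, and (with $p_a(C)=0$) $C$ is a $(-1)$-curve; note this step is uniform in $r$. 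For $\alpha\ge 2$ I would combine the displayed bound with the estimate $\beta \ge \sum_i n_i$: substituting gives $\beta \le (e-1)\alpha + 1$, which against $\beta\ge \alpha e$ forces $\alpha\le 1$, a contradiction, so no negative curve with $\alpha\ge 2$ exists.

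The main obstacle is securing the estimate $\sum_i n_i \le \beta$ (or a usable substitute) throughout the range $r\le e+2$. Lemma \ref{b at least sum of M_i's} supplies exactly $\beta\ge\sum n_i$ when $\alpha>1$ or $\beta>e$, but only for $r\le e+1$. To reach $r=e+2$ I would replace the auxiliary system $|C_e+ef|$ used there by $|C_e+(e+1)f|$, which has dimension $e+3\ge r$ and hence contains a curve $D$ through all $r$ points; because $\alpha\ge 2$ makes $\mathcal{C}$ a non-component of $D$ (as $(1-\alpha)C_e+(e+1-\beta)f$ is not effective), intersecting gives $\alpha+\beta = \mathcal{C}\cdot D \ge \sum_i n_i$, and the weaker bound $\alpha+\beta\ge\sum n_i$ still produces $\alpha\le 1$ in the displayed inequality. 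This is the only place where the precise hypothesis $r\le e+2$ enters; alternatively one could feed the bound $C^2\ge -m$ from Lemma \ref{Xumulti} into the same scheme to control the multiplicities.
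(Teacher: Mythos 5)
Your proposal is correct, and it proves exactly what the paper itself proves (the conjecture in the range $r\le e+2$), but by a genuinely different route. The paper's argument pours everything into the single numerical identity \eqref{eqn1} and closes it with three inputs: the Ein--Lazarsfeld/Xu-type bound of Lemma \ref{Xumulti} (a deformation-theoretic fact about curves through very general points), Lemma \ref{b at least sum of M_i's} applied to the $r-1$ points other than a minimal-multiplicity one (this is how a lemma stated for $r\le e+1$ is stretched to $r\le e+2$), and adjunction; the residual cases $b=0$ and $b=e$ are finished by an $h^0(C_e+ef)=e+2$ count. You never invoke Lemma \ref{Xumulti}: your genus inequality $p_a(\mathcal{C})\ge\sum_i\binom{n_i}{2}$ is the same thing as the paper's adjunction input (non-negativity of $p_a$ of the strict transform), and in place of the Xu-type bound you use a dimension count --- very general points impose independent conditions on $|C_e+\beta f|$, so $s\le 2\beta-e+1$ --- to settle $\alpha=1$ for \emph{every} $\beta\ge e$, plus the enlarged auxiliary system $|C_e+(e+1)f|$ (instead of the paper's ``drop one point'' device) to get $\alpha+\beta\ge\sum_i n_i$ and kill $\alpha\ge 2$. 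What each approach buys: your $\alpha=1$ case is uniform in $r$, and your $\alpha\ge2$ contradiction uses only distinctness of the points, so very-generality enters solely through the fiber case and the dimension count; the paper's route needs very-generality already in Lemma \ref{Xumulti}, but it disposes of all $a\neq 0$, $b\notin\{0,e\}$ in one stroke ($k+\alpha\le 0$, hence $k=1$, $\alpha=-1$). Two small remarks: with $\beta\ge\sum_i n_i$ your substitution actually gives the stronger bound $\beta\le(e-2)\alpha+1$ (the stated $(e-1)\alpha+1$ is what the weaker estimate $\alpha+\beta\ge\sum_i n_i$ yields, and either suffices); and the base-point-freeness of $|C_e+\beta f|$ is not needed --- very generality alone gives the independence of conditions, provided you note that only countably many systems $|C_e+\beta f|$ are in play.
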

	\begin{proposition}
		Conjecture \ref{conj2} implies Conjecture \ref{conj3}. 
		\begin{proof}
			Let $C$ denote an irreducible and reduced curve in $\Fer$ with $C^2<0$. 
			Suppose that $C\neq \widetilde{C}_e$. Let $|C|$ denote the linear system of curves linearly equivalent to $C$. Then since $C^2<0$, $|C|=\{C\}$. Hence $|C|$ is a reduced linear system. Further, $\widetilde{C}_e$ is not a component of $C$. Therefore, by Conjecture \ref{conj2},  $v(|C|)=\text{dim}|C|=0$. So $C^2=K_{\mathbb{F}_{e,r}}\cdot C$ and the adjunction formula gives $C^2=K_{\mathbb{F}_{e,r}}\cdot C=-1$.
		\end{proof}
	\end{proposition}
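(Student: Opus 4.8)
The plan is to prove the statement by contraposition at the level of curves: I would take an irreducible reduced curve $C \subset \Fer$ with $C^2 < 0$, assume $C \neq \widetilde{C}_e$, and deduce that $C$ must be a $(-1)$-curve. The entire strategy is to feed the complete linear system $|C|$ into Conjecture \ref{conj2} and then read off the numerical consequence via the virtual-dimension formula and adjunction.

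First I would verify that Conjecture \ref{conj2} applies to $|C|$. The system is non-empty since $C \in |C|$. The key observation, which simultaneously handles the reducedness and fixed-component hypotheses, is that an irreducible curve of negative self-intersection is rigid: if $D \sim C$ is effective, then $C \cdot D = C^2 < 0$ forces $C$ to be a component of $D$, so $D - C$ is effective and numerically trivial, whence $D = C$. Thus $|C| = \{C\}$, so $\dim |C| = 0$ and the (unique, hence general) member is the reduced irreducible curve $C$. Since $C$ is irreducible and $C \neq \widetilde{C}_e$, the curve $\widetilde{C}_e$ cannot be a component of $C$, and in particular is not a fixed component of $|C|$.

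With the hypotheses in place, Conjecture \ref{conj2} gives that $|C|$ is non-special, i.e. $\dim |C| = e(|C|) = \max(v(|C|), -1)$. Because $\dim |C| = 0$, this forces $v(|C|) = 0$. Using $v(|C|) = \frac{C^2 - K_{\mathbb{F}_{e,r}} \cdot C}{2}$ (from \cite[Proposition 2.3]{Laf2002}, recorded above), I get $C^2 = K_{\mathbb{F}_{e,r}} \cdot C$. Now the adjunction formula $C^2 + K_{\mathbb{F}_{e,r}} \cdot C = 2 p_a(C) - 2$ for the irreducible curve $C$ turns this into $2 C^2 = 2 p_a(C) - 2$, so $C^2 = p_a(C) - 1 \ge -1$. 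Combined with $C^2 < 0$ and integrality, this pins down $C^2 = -1$ and hence $K_{\mathbb{F}_{e,r}} \cdot C = -1$; that is, $C$ is a $(-1)$-curve.

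The only genuinely delicate point I anticipate is the rigidity claim $|C| = \{C\}$, since it is precisely what lets me certify the reducedness and fixed-component hypotheses of Conjecture \ref{conj2}; once that is secured, the remainder is a two-line computation combining the virtual-dimension formula with adjunction. I would take care that the argument nowhere secretly assumes $C$ moves in a positive-dimensional family, and that $C = \widetilde{C}_e$ is genuinely the sole exceptional case being set aside.
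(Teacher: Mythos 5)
Your proposal is correct and follows essentially the same route as the paper's proof: feed the rigid linear system $|C|=\{C\}$ into Conjecture \ref{conj2}, conclude $v(|C|)=\dim|C|=0$, hence $C^2=K_{\mathbb{F}_{e,r}}\cdot C$, and finish by adjunction to get $C^2=K_{\mathbb{F}_{e,r}}\cdot C=-1$. The only difference is that you spell out details the paper leaves implicit (the rigidity argument via $C\cdot D=C^2<0$, and the bound $C^2=p_a(C)-1\ge -1$), which strengthens rather than alters the argument.
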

	We now prove Conjecture \ref{conj3} when $r \le e+2$.
	\begin{theorem}
		Let $\Fer$ denote the blow-up of $\Fe$ at $r$ very general points $p_1,\dots, p_r$. For $r\leq e+2$, if $C$ is an irreducible and reduced curve in $\Fer$ with negative self intersection, then $C$ is either a $(-1)$-curve or the strict transform of $C_e$ on $\Fer$.
	\end{theorem}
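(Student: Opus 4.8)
The plan is to list the irreducible reduced curves $C\subset\Fer$ with $C^2<0$ by pushing them down to $\Fe$. If $C$ is contracted by $\pi$ it is some $E_i$, a $(-1)$-curve, so I may assume $C=\widetilde{\mathcal C}$ is the strict transform of an irreducible curve $\mathcal C=\alpha C_e+\beta f$ on $\Fe$; thus $C=\alpha H_e+\beta F_e-\sum_{i=1}^r n_iE_i$ with $n_i=\mathrm{mult}_{p_i}\mathcal C\ge 0$, and \cite[Corollary 2.18, Chapter V]{Hartshorne1977} gives $\alpha,\beta\ge 0$. First I would clear away the degenerate shapes. If $\alpha=0$ then $\mathcal C$ is a fibre; since the $p_i$ are very general no fibre contains two of them, so $C$ is $F_e$ (with $C^2=0$, not allowed) or the $(-1)$-curve $F_e-E_i$. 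If $\beta=0$ then $\mathcal C=C_e$ and $C=\widetilde{C}_e$, the permitted exception. This leaves the main range $\alpha\ge 1$, $\beta\ge 1$, where \cite[Corollary 2.18, Chapter V]{Hartshorne1977} forces $\beta\ge\alpha e$.

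In this range I would write $C^2=-t$ with $t\ge 1$ and play two bounds against each other. On one hand, each multiplicity satisfies $n_i\le\mathcal C\cdot f_i=\alpha$, and if $k\le r\le e+2$ denotes the number of points lying on $\mathcal C$, I can pass a curve $D\in|C_e+ef|$ through all but the point of largest multiplicity, because $h^0(C_e+ef)=e+2$ and this imposes at most $k-1\le e+1$ simple conditions. Since $\mathcal C\cdot D=\beta$ and $\mathcal C$ cannot be a component of $D$ for class reasons, this yields $\beta\ge N-n_{\max}$, hence $N:=\sum_i n_i\le\beta+\alpha$. On the other hand, $p_a(C)\ge 0$ together with adjunction gives $C^2+K_{\Fer}\cdot C\ge -2$, and substituting $K_{\Fer}\cdot C=\alpha e-2\alpha-2\beta+N$ turns this into $N\ge 2\beta+2\alpha-\alpha e+t-2$. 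Combining the two bounds and using $\beta\ge\alpha e$ gives $\beta\le\alpha e-\alpha-t+2$, and hence $\alpha+t\le 2$. This is already impossible when $\alpha\ge 2$; and when $\alpha=1$ it forces $\beta=e$, contradicting the standing assumption of this step that $\mathcal C\notin|C_e+ef|$ (which is exactly what makes an auxiliary $D\neq\mathcal C$ available). Hence the only negative curves not yet accounted for are those of the single remaining shape $\alpha=1,\beta=e$.

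That shape is precisely where the previous step is not allowed, since then $\mathcal C$ itself lies in $|C_e+ef|$; I would treat it by a direct dimension count. Here $n_i\le\alpha=1$, so $C^2=e-k$, where $k$ is the number of points met by $\mathcal C$. Very generality now enters decisively: a member of the $(e+1)$-dimensional system $|C_e+ef|$ cannot pass through $e+2$ very general points, so $k\le e+1$ and $C^2=e-k\ge -1$. Thus $C^2=-1$, and the computation $K_{\Fer}\cdot C=e-2-2e+(e+1)=-1$ (equivalently $p_a(C)=0$) shows $C$ is a $(-1)$-curve, as required.

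I expect the genuinely delicate point to be the hypothesis $r\le e+2$ itself, which is used, and is sharp, in the auxiliary-curve step: the bound $N\le\beta+\alpha$ rests on finding $D\in|C_e+ef|$ through $k-1$ of the points, possible exactly when $k-1\le e+1$, i.e. $k\le e+2$; as $k\le r$, the argument runs only for $r\le e+2$, and for $r=e+3$ one can no longer omit merely a single point. The estimates here are the same ones packaged in Lemma \ref{b at least sum of M_i's} and Lemma \ref{Xumulti} (intersecting against $|C_e+ef|$ and the very-general multiplicity inequality), either of which could be substituted to streamline the bookkeeping; the collapse to $\alpha+t\le 2$ seems to be the most economical way to dispose of all $\alpha\ge 2$ at once.
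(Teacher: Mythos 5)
Your proof is correct, and although it shares the paper's skeleton---the case division coming from Hartshorne's irreducibility criterion \cite[Corollary 2.18, Chapter V]{Hartshorne1977}, an auxiliary curve $D\in|C_e+ef|$ through all but one of the marked points (which is exactly where the hypothesis $r\le e+2$ enters, in both arguments), and an identical dimension count for the residual shape $(\alpha,\beta)=(1,e)$---your key estimate in the main range is genuinely different. The paper sums three inequalities against the adjunction identity \eqref{eqn1}: Lemma \ref{b at least sum of M_i's} applied to the $r-1$ points obtained by dropping the one of \emph{minimal} positive multiplicity $n_j$, the Ein--Lazarsfeld/Xu-type bound $2ab-a^2e\ge\sum n_i^2-n_j$ of Lemma \ref{Xumulti} (which genuinely needs the points to be very general), and $2a\ge 2n_j$; this yields $K_{\Fer}\cdot C\le C^2$, and then $C^2=-1$ follows from the genus bound. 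You instead drop the point of \emph{maximal} multiplicity, getting the linear bound $\sum n_i\le\beta+\alpha$, and pair it directly with $p_a(C)\ge 0$ (i.e.\ $C^2+K_{\Fer}\cdot C\ge -2$) and $\beta\ge\alpha e$, collapsing everything at once to $\alpha+t\le 2$. The gain is that Lemma \ref{Xumulti}, and with it the Ein--Lazarsfeld deformation input, disappears entirely: your main-range argument uses no genericity whatsoever and is valid for arbitrary distinct points, very generality being needed only to ensure no fibre contains two of the $p_i$ and for the dimension count in the $(1,e)$ case. One correction to your closing remark: your second estimate is \emph{not} the one packaged in Lemma \ref{Xumulti}, as you suggest; it is the elementary inequality $p_a(C)\ge 0$, which controls $\sum n_i(n_i-1)$ for any irreducible reduced curve with no hypothesis on the points, whereas Lemma \ref{Xumulti} controls $\sum n_i^2-n_j$ and requires very general position---and that substitution is precisely what makes your route simpler than the paper's.
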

	\begin{proof}
		Let $C$ denote an irreducible and reduced curve in $\Fer$ with $C^2<0$. 	Suppose that $C=aH_e+bF_e-n_1E_1-\dots-n_rE_r$ for some non-negative integers $a, b, n_1,\ldots, n_r$. Then 
		\begin{eqnarray*}
			C^2 & = &-a^2e+2ab-\sum_{i=1}^{r}n_i^2~~~\text{and} \\
			K_{\mathbb{F}_{e,r}}\cdot C & = &2ae-a(e+2)-2b+\sum_{i=1}^{r}n_i.
		\end{eqnarray*}
		Let $C^2=-k$ for some $k\geq 1$ and $K_{\mathbb{F}_{e,r}}\cdot C=\alpha$.
		So, we have
		\begin{equation*}
			-a^2e+2ab-\sum_{i=1}^{r}n_i^2+k=2ae-a(e+2)-2b+\sum_{i=1}^{r}n_i-\alpha,
		\end{equation*}
		which, on rearranging, gives
		\begin{equation}\label{eqn1}
			(2ab-a^2e)+2a+b+(b-ae)+k+\alpha=\sum_{i=1}^{r}n_i^2+\sum_{i=1}^{r}n_i.
		\end{equation}
		
		If $a=0$, by the irreducibility of $C$, we have $b=0$ or $1$. If $b=1$, $C=F-E_{i_1}-\dots-E_{i_k}$ for some $\{i_1,\dots, i_k\}\subseteq \{1,\dots, r\}$. Since $p_1,\dots,p_r$ are very general, $k=1$, again implying that $C$ is a $(-1)$-curve. Similarly if $b=0$, $C$ is equal to  $E_i$ for some $i$.  
		
		Suppose that $a \neq 0$ and $b\notin \{0,e\}$. Since $C$ is irreducible, $b-ae\geq 0$. Also, by hypothesis, $r-1\leq e+1$. If $n_i=0$ for all $1 \le i \le r$, then $C^2=2ab-a^2e\geq 0$ which is not possible. So  $n_i\neq 0$ for some $i$. Let $n_j=\min\{n_i|n_i\neq 0, 1\leq i\leq r\}$. By Lemma \ref{b at least sum of M_i's}, $b\geq \sum\limits_{i=1}^{r}n_i-n_j$. Further, by {Lemma \ref{Xumulti},} $2ab-a^2e \geq \sum\limits_{i=1}^{r}n_i^2 - n_j$. Now since $a\neq 0$, we have $2a\geq 2n_j$. Summing all these inequalities, we have
		\begin{equation}\label{4.2}
			(2ab-a^2e)+2a+b+(b-ae)\geq \sum_{i=1}^{r}n_i^2+\sum_{i=1}^{r}n_i.
		\end{equation}
		
		Using \eqref{eqn1} {and \eqref{4.2}} , we have $k+\alpha \leq 0$. Finally, by	adjunction formula, $-k+\alpha   \geq -2$. If $k\geq 2$, then $\alpha \geq k - 2 \geq 0$. So $k+\alpha \geq 2$, which contradicts $k+\alpha \leq 0$. Thus $k=1$ and hence $\alpha=-1$. Therefore, $C$ is a $(-1)$-curve when $ a \neq 0 \text{ and } b \notin \{0,e\}$.
		
		If $a\neq 0$ and $b=0$, as $C$ is irreducible, we have $a=1$. So, $C=\widetilde{C}_e$. If $a\neq 0$ and $b=e$, the irreducibility of $C$ will again imply that $a=1$.  Let $C=H_e+eF_e-n_1E_1-\cdots -n_rE_r$. As $a=1$, $n_i\leq 1$ for all $1\leq i\leq r$.  Also since $C^2=e-\sum\limits_{i=1}^{r}n_i^2<0$, $\sum\limits_{i=1}^{r}n_i^2>e$. However, if $\sum\limits_{i=1}^{r}n_i^2>e+1$, image of $C$ in $\Fe$ will be a curve passing through at least $e+2$ very general points. But that is a contradiction since $h^0(C_e+ef)=e+2$. So $\sum\limits_{i=1}^{r}n_i^2=e+1$. Hence $C^2=-1$ and $-K_{\mathbb{F}_{e,r}}\cdot C=e+2-\sum\limits_{i=1}^r n_i=1$, since $\sum\limits_{i=1}^{r}n_i^2=e+1$ and $n_i\leq 1$. Therefore, $C$ is a $(-1)$ curve in this case as well, thereby, completing the proof.
	\end{proof}
	
	Dumnicki, K\"{u}ronya, Maclean and Szemberg \cite[Main Theorem]{Dumnicki2016} showed that the SHGH conjecture for $\mathbb{P}^2$ implies the  irrationality of  Seshadri constant of a suitable ample line bundle at a very general point of blow-up of $\mathbb{P}^2$ at $r$ very general points. Hanumanthu and Harbourne \cite[Theorem 2.4]{Hanumanthu2018} improved this by showing that the Weak SHGH Conjecture for $\mathbb{P}^2$ gives the same result. It is natural to ask the following question. 
	
	\begin{question}\label{irratiional}
		Do Conjectures \ref{conj1}, \ref{conj2} or \ref{conj3} imply the irrationality of Seshadri constant of some ample line bundle at a very general point in the blow-up of $\Fe$ at $r$ very general points? 
	\end{question}
	
	By  Theorem \ref{actual computation of Seshadri constants} and Theorem \ref{for x in general position},  for $r\leq e+1$, Seshadri constant for any ample line bundle at a very general point is an integer. So an affirmative answer for Question \ref{irratiional} is possible only for $r \geq e+2$.
	
	\section*{Acknowledgements}
	We thank the referee for a careful reading of the paper and numerous suggestions which improved the paper. We thank Ronnie Sebastian for several useful comments.
	Authors were partially supported by a grant from Infosys Foundation. The fourth author was supported by the National Board for Higher Mathematics (NBHM), Department of Atomic Energy, Government of India (0204/2/2022/R\&D-II/2683).


\begin{thebibliography}{KP}
		
		\bibitem{Bauer2009}   Thomas Bauer, Sandra Di Rocco, Brian Harbourne, Michal Kapustka, Andreas Knutsen, Wioletta Syzdek and Thomas Szemberg, {\em A primer on Seshadri constants}, Contemp. Math. \textbf{496} (2009), 33--70.
		
		\bibitem{Beau1983}	Arnaud Beauville, {\em Complex Algebraic Surface}, Cambridge University Press, 1983.
		
		\bibitem{Dem92} Jean-Pierre Demailly, {\em Singular Hermitian metrics on positive line bundles}, Complex algebraic varieties (Bayreuth, 1990),  Lect. Notes Math. \textbf{1507}, Springer-Verlag, (1992), 87--104.
		
		\bibitem{Duminicki2010} Marcin Dumnicki, {\em Special homogeneous linear systems on Hirzebruch surfaces}, Geom. Dedicata \textbf{147} (2010), 283 -- 311.
		
		\bibitem{Dumnicki2016} Marcin Dumnicki, Alex K\"{u}ronya, Catriona Maclean, and Tomasz Szemberg, {\em Rationality of Seshadri constants and the Segre-Harbourne-Gimigliano-Hirschowitz conjecture}, Adv. Math. \textbf{303} (2016), 1162--1170.
		
		\bibitem{Ein-Lazarsfeld1993} Lawrence Ein and  Robert Lazarsfeld, {\em Seshadri constants on smooth surfaces}, 
		Journ\'{e}es de G\'{e}om\'{e}trie Alg\'{e}brique d'Orsay (Orsay,  1992), Ast\'{e}risque No. \textbf{218} (1993), 177--186.
		
		\bibitem{Farnik2020} \L{}ucja Farnik, Krishna Hanumanthu, Jack Huizenga, David Schmitz and Tomasz Szemberg, 
		{\em Rationality of Seshadri constants on general blow ups of $\mathbb{P}^2$}, J. Pure Appl. Algebra \textbf{224} (2020), no.8, 106345. 
		
		\bibitem{Garcia2006}	Luis Fuentes Garc\'ia,  {\em Seshadri constants on ruled surfaces: the rational and the elliptic cases}, Manuscripta Math. \textbf{119} (2006), no.4, 483--505.
		
		\bibitem{Garcia2008}	 Luis Fuentes Garc\'ia, {\em Cyclic coverings and Seshadri constants on smooth surfaces}, Forum Math. \textbf{20} (2008), 433--444.
		
		\bibitem{Gim1987} Alessandro Gimigliano, {\em On Linear System Of Plane Curves}, ProQuest LLC, Ann Arbor, MI, 1987.
		
		\bibitem{Hanumanthu2018} Krishna Hanumanthu and Brian Harbourne, {\em Single point Seshadri constants on rational surfaces}, J. Algebra \textbf{499} (2018), 37--42.
		
		\bibitem{Har1986} Brian Harbourne, {\em The geometry of rational surfaces and Hilbert functions of points in the plane},  Proceedings of the 1984 Vancouver conference in algebraic geometry {\bf 6} (1986), 95--111.
		
		\bibitem{Hartshorne1970}	Robin Hartshorne, {\em Ample subvarieties of algebraic varieties}, Lect. Notes in Math., \textbf{156}, Springer 1970.	
		
		\bibitem{Hartshorne1977}  Robin Hartshorne, {\em Algebraic Geometry}, Graduate Texts in Mathematics, No. \textbf{52},  Springer-Verlag, New York-Heidelberg, 1977.
		
		
		\bibitem{Hir1989} Andr\'e Hirschowitz, {\em Une conjecture pour la cohomologie des diviseurs sur les surfaces rationnelles g\'{e}n\'{e}riques}, J. Reine Angew. Math. {\bf 397} (1989), 208--213.
		
		
		\bibitem{Kol1995} J\'{a}nos Koll\'{a}r,  {\em Shafarevich maps and automorphic forms}, Princeton University Press, Princeton, NJ, 1995. 
		
		\bibitem{Laf2002} Antonio Laface, {\em On linear systems of curves on rational scrolls}, Geom. Dedicata \textbf{90} (2002), 127--144.
		
		\bibitem{Lahyane-Harbourne2005} Mustapha Lahyane and Brian Harbourne, {\em Irreducibility of $-1$-classes on anticanonical rational surfaces and finite generation of the effective monoid}, Pacific J. Math. {\bf 218} (2005), 101--114.
		
		\bibitem{Lee-Shin} Jae-Hyouk Lee and Yongjoo Shin, {\em Blown-up Hirzebruch surfaces and special divisor classes}, Mathematics 8(6) \textbf{867} (2020).		
		
		\bibitem{Medina-Lahanye2020}	Juan Bosco Frias Medina and Mustapha Lahyane, {\em The effective monoids of the blow-ups of Hirzebruch surfaces at points in general position}, Rendiconti del Circolo Matematico di Palermo Series 2, \textbf{70}  (2020), 167--197.
		
		\bibitem{Nak2003} Michael Nakamaye, {\em Seshadri constants and the geometry of surfaces}, J. Reine Angew. Math. \textbf{564} (2003), 205--214.
		
		\bibitem{Sano2014} Taro Sano, {\em Seshadri constants on surfaces with anti-canonical pencils}, J. Pure Appl. Algebra \textbf{218}  (2014), 602--617.
		
		\bibitem{Ser1961} Beniamino Segre, {\em Alcune questioni su insiemi finiti di punti in geometria algebrica},  Atti Convegno Internaz. Geometria Algebrica  (1961), 15--33.
		
		\bibitem{Syzdek2005} Wioletta Syzdek, {\em Seshadri constants and geometry of surfaces}, Ph.D thesis, University of Duisberg-Essen, (2005).
		
		
	\end{thebibliography}
\end{document}